\theoremstyle{plain}
\newtheorem{theorem}{Theorem}[section]
\newtheorem{lemma}[theorem]{Lemma}
\newtheorem{corollary}[theorem]{Corollary}
\theoremstyle{definition}
\newtheorem{remarks}[theorem]{Remarks}
\newtheorem{notation}[theorem]{Notation}
\numberwithin{equation}{section}
\newcommand{\li}{\text{\textnormal{li}}}
\newcommand{\aut}{\text{\textnormal{Aut}}}
\begin{document}

\title{On invariants of elliptic curves on average}

\author{Amir Akbary}
\address{University of Lethbridge, Department of Mathematics and Computer Science, 4401 University Drive, Lethbridge, AB, T1K 3M4, Canada}
\email{amir.akbary@uleth.ca}
\author{Adam Tyler Felix}
\address{University of Lethbridge, Department of Mathematics and Computer Science, 4401 University Drive, Lethbridge, AB, T1K 3M4, Canada}
\email{adam.felix@uleth.ca}

\subjclass[2010]{11G05, 11G20}

\thanks{Research of the first author is partially supported by NSERC. Research of the second author is supported by a PIMS postdoctoral fellowship. }

\keywords{\noindent reduction mod $p$ of elliptic curves, invariants of elliptic curves, average results }

\date{\today}

\begin{abstract}

We prove several results regarding some invariants of elliptic curves on average over the family of all elliptic curves inside a box of sides $A$ and $B$. As an example, let $E$ be an elliptic curve defined over $\mathbb{Q}$ and  $p$ be a prime of good reduction for $E$. Let $e_E(p)$ be the exponent of the group of rational points of the reduction modulo $p$ of $E$ over the finite field $\mathbb{F}_p$. Let $\mathcal{C}$ be the family of elliptic curves $$E_{a,b}:~y^2=x^3+ax+b,$$ where $|a|\leq A$ and $|b|\leq B$. We prove that, for any $c>1$ and $k\in \mathbb{N}$,
\begin{equation*}
\frac{1}{|\mathcal{C}|} \sum_{E\in \mathcal{C}} \sum_{p\leq x} e_E^k(p) = C_k {\rm li}(x^{k+1})+O\left(\frac{x^{k+1}}{(\log{x})^c} \right),
\end{equation*}
as $x\rightarrow \infty$, as long as  $A, B>\exp\left(c_1 (\log{x})^{1/2} \right)$ and $AB>x(\log{x})^{4+2c}$, where $c_1$ is a suitable positive constant.
Here $C_k$ is an explicit constant given in the paper which depends only on $k$, and ${\rm li}(x)=\int_{2}^x dt/\log{t}$. We prove several similar results as corollaries to a general theorem. The method of the proof is capable of improving some of the known results with $A, B>x^\epsilon$ and $AB>x(\log{x})^\delta$ to $A, B>\exp\left(c_1 (\log{x})^{1/2} \right)$ and $AB>x(\log{x})^\delta$.
\end{abstract}
\maketitle
\vspace{-1cm}
\section{INTRODUCTION AND RESULTS}

Let $E$ be an elliptic curve defined over $\mathbb{Q}$ of conductor $N$. For a prime $p$ of good reduction (i.e. $p\nmid N$), let $E_p$ be the reduction mod $p$ of $E$. It is known that $E_p(\mathbb{F}_p)$, the group of rational points of $E$ over the finite field $\mathbb{F}_p$, is the product of at most two cyclic groups, namely
$$E_p(\mathbb{F}_p)\simeq (\mathbb{Z}/i_E(p)\mathbb{Z}) \times (\mathbb{Z}/e_E(p)\mathbb{Z}),$$
where $i_{E}(p)$ divides $e_E(p)$. Thus, $e_E(p)$ is the exponent of $E_p(\mathbb{F}_p)$ and $i_E(p)$ is the index of the largest cyclic subgroup of $E_p(\mathbb{F}_p)$. In recent years there has been a lot of interest in studying the distribution of the invariants $i_E(p)$ and $e_E(p)$.  

Borosh, Moreno, and Porta \cite{BMP} were the first to study computationally $i_E(p)$ and conjectured that, for some elliptic curves, $i_E(p)=1$ occurs often. We note that $i_E(p)=1$ if and only if $E_p(\mathbb{F}_p)$ is cyclic. Let 
\begin{equation}
\label{cyclicity}
N_E(x)=\#\{p\leq x;~p\nmid N~{\rm and}~E_p(\mathbb{F}_p)~ {\rm is~cyclic}\}. 
\end{equation}
Then Serre \cite{S}, under the assumption of the generalized Riemann hypothesis (GRH) for division fields $\mathbb{Q}(E[k])$, proved that $N_E(x)\sim c_E {\rm li}(x)$ as $x\rightarrow \infty$, where $c_E>0$ if and only if 
$\mathbb{Q}(E[2])\neq \mathbb{Q}$. Here ${\rm li}(x)= \int_{2}^{x} {dt}/{\log{t}}$. For the curves with complex multiplication (CM), Murty \cite{M} removed the assumption of the GRH. Also, he showed that under GRH one can obtain the estimate $O(x\log\log{x}/(\log{x})^2)$ for the error term in the asymptotic formula for $N_E(x)$ for any elliptic curve $E$. The value of the error term is improved to $O(x^{5/6} (\log{x})^{2/3})$ in \cite{CM}. In \cite{AM2}, following the method of \cite{M} in the CM case,  the error term $O(x/(\log{x})^A)$ for any $A>1$ is established.

Another problem closely related to cyclicity is finding the average value of the number of divisors of $i_E(p)$ as $p$ varies over primes. Let $\tau(n)$ denote the number of divisors of $n$. In \cite{AG}, Akbary and Ghioca proved that $$\sum_{p\leq x} \tau(i_E(p))=c_E {\rm li}(x)+O\left(x^{5/6} (\log{x})^{2/3}\right)$$
if GRH holds, and   
$$\sum_{p\leq x} \tau(i_E(p))=c_E {\rm li}(x)+O\left(\frac{x}{(\log{x})^A}\right),$$
for $A>1$, if $E$ has CM. In the above asymptotic formulas $c_E$ is a positive constant which depends only on $E$. 

A more challenging problem is studying the average value of $i_E(p)$.  In \cite{K}, Kowalski proposed this problem  and proved unconditionally that the lower bound $\log\log{x}$ holds for $$\frac{1}{x/\log{x}} \sum_{p\leq x} i_E(p)$$
if $E$ has CM. He also showed that for a non-CM curve the above quantity is bounded from the below.

A more approachable problem is finding the average value of $e_E(p)$. Freiberg and Kurlberg \cite{FK} were the first to consider this problem and established conditional (unconditional in CM case) asymptotic formulas for $\sum_{p\leq x} e_E(p)$. The best result to date is due to Felix and  Murty \cite{FM} who proved more generally that for $k$ a fixed positive integer the following asymptotic formula holds:
$$\sum_{p\leq x} e_E^k(p)=c_{E, k} {\rm li} (x^{k+1})+O\left(x^k \mathcal{E}(x)\right),$$
where
\begin{equation*}
\mathcal{E}(x)=\begin{cases}
x/ (\log{x})^A \qquad &\text{if }E \text{ has CM}\\
x^{5/6} (\log{x})^2 &\text{if GRH holds}
\end{cases}
\end{equation*}
and $c_{E, k}$ is a positive constant depending on $E$ and $k$.
Felix and Murty derived their result as a consequence of a more general theorem on asymptotic distribution of $i_E(p)$'s. Their general theorem also imply the best known results on the cyclicity, the Titchmarsh divisor problem, and several other similar problems. 
To state their result, 
let $g(n)$ be an arithmetic function such that
\begin{equation}
\label{bound}
\sum_{n\leq x} |g(n)| \ll x^{1+\beta} (\log{x})^\gamma,
\end{equation}
where $\beta$ and $\gamma$ are arbitrary, and let 
\begin{equation}
\label{summatory}
f(n)=\sum_{d\mid n} g(d).  
\end{equation}
Then the following is proved in \cite[Theorem 1.1(c)]{FM}.
\begin{theorem}[{\bf Felix and Murty}]
\label{FMtheorem}
Under the assumption of GRH and bound \eqref{bound} for $\beta<1/2$ and arbitrary $\gamma$, we have $$\sum_{p\leq x} f(i_E(p))=c_E(f) {\rm li}(x)+O\left( x^{\frac{5+2\beta}{6}} (\log{x})^{\frac{(2-\beta)(1+\gamma)}{3}}\right),$$
where $c_E(f)$ is a constant depending only on $E$ and $f$. 
\end{theorem}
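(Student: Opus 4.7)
My plan is to begin from the Dirichlet convolution $f(n)=\sum_{d\mid n}g(d)$; exchanging the order of summation gives
\begin{equation*}
\sum_{p\leq x}f(i_E(p))=\sum_{d\geq 1}g(d)\,\pi_E(x,d),\qquad \pi_E(x,d):=\#\{p\leq x:\,p\nmid N,\;d\mid i_E(p)\}.
\end{equation*}
The key algebraic input is that $d\mid i_E(p)$ is equivalent to $E_p(\mathbb{F}_p)\supseteq(\mathbb{Z}/d\mathbb{Z})^2$, and by the Weil pairing this in turn is equivalent to the Frobenius at $p$ acting trivially on the $d$-torsion, i.e.\ to $p$ splitting completely in the $d$-th division field $L_d:=\mathbb{Q}(E[d])$. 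Hence $\pi_E(x,d)$ is a Chebotarev prime count for the trivial conjugacy class in $\text{Gal}(L_d/\mathbb{Q})$, of degree $n_d:=[L_d:\mathbb{Q}]$. One always has $n_d\ll d^4$; outside a finite exceptional set of $d$'s one also has $n_d\gg d^4$, via Serre's open image theorem in the non-CM case and the explicit CM description otherwise. The primes ramified in $L_d/\mathbb{Q}$ divide $dN$, giving $\log|\text{disc}(L_d)|\ll n_d\log(dN)$.

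Next I would split the $d$-sum at two cut-offs $y_1\le y_2$, noting that Hasse's bound forces $d\le 2\sqrt{x}+1$ whenever $d\mid i_E(p)$ for some $p\le x$. For $d\le y_1$, apply the effective Chebotarev density theorem of Lagarias--Odlyzko under GRH:
\begin{equation*}
\pi_E(x,d)=\frac{{\rm li}(x)}{n_d}+O\bigl(\sqrt{x}\log(Nxd)\bigr).
\end{equation*}
Weighting by $g(d)$ and invoking \eqref{bound} produces the contribution ${\rm li}(x)\sum_{d\le y_1}g(d)/n_d$ together with an error $O\bigl(\sqrt{x}(\log x)\,y_1^{1+\beta}(\log y_1)^{1+\gamma}\bigr)$. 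Completing the truncated sum to all $d\ge 1$ defines $c_E(f):=\sum_{d\ge 1}g(d)/n_d$; the completing tail $\sum_{d>y_1}g(d)/n_d$ is negligible in view of $n_d\gg d^4$ and \eqref{bound}. For $y_1<d\le y_2$ I would drop to the pair of consequences $p\equiv 1\pmod{d}$ (Weil pairing) and $a_p\equiv p+1\pmod{d^2}$ (from $d^2\mid|E_p(\mathbb{F}_p)|$) and combine them via a Hooley-style sieve with Brun--Titchmarsh to obtain a bound of the shape $\pi_E(x,d)\ll x/(d^2\log(x/d^2))+\sqrt{x}/d$. For $y_2<d\le 2\sqrt{x}+1$ only the trivial estimate $\pi_E(x,d)\ll\sqrt{x}/d+1$ remains, but the shortness of this range absorbs the loss.

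Finally, the cut-offs $y_1,y_2$ are optimized to balance the three errors (GRH Chebotarev, middle-range Brun--Titchmarsh/Hooley, and completing tail of $\sum g(d)/n_d$); a choice of the shape $y_1=x^{1/6}(\log x)^{-A_1(\beta,\gamma)}$ and $y_2=\sqrt{x}(\log x)^{-A_2(\beta,\gamma)}$, after careful partial summation against \eqref{bound}, produces the stated error $x^{(5+2\beta)/6}(\log x)^{(2-\beta)(1+\gamma)/3}$. I expect the middle-range step to be the main obstacle: a plain Brun--Titchmarsh inequality on the congruence $p\equiv 1\pmod{d}$ alone is too weak, so both congruences must be used simultaneously via the Hooley trick, and it is exactly this step that forces the exponent $(2-\beta)(1+\gamma)/3$ on $\log x$ after partial summation. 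Secondary technical points are the uniformity in $d$ of the Lagarias--Odlyzko bound (needed so that summing over $d\le y_1$ loses only the stated logarithmic factor) and the handling of the finite exceptional set on which $n_d\not\gg d^4$, whose contribution is $O(1)$ and can be absorbed into $c_E(f)$.
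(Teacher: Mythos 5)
The paper does not prove this theorem; it is quoted from Felix--Murty \cite[Theorem~1.1(c)]{FM}, so there is no ``paper's own proof'' to compare against. That said, your proposal reproduces the correct global architecture of the Felix--Murty argument: open $f=\mathbf{1}*g$, translate $d\mid i_E(p)$ into complete splitting of $p$ in $\mathbb{Q}(E[d])$ via the Weil pairing, apply the effective Chebotarev theorem under GRH for small $d$, and control the tail with Brun--Titchmarsh reinforced by a Hooley/Cojocaru--Murty device. Two details, however, are off and worth correcting.

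First, you attribute the logarithmic exponent $(2-\beta)(1+\gamma)/3$ to the middle-range Hooley step ``after partial summation.'' That is not where it comes from. Take a single cut-off $y$ and write, for $d\le y$, $\pi_E(x,d)=\mathrm{li}(x)/n_d+O\bigl(\sqrt{x}\log(Nxd)\bigr)$. Summing against \eqref{bound} gives a GRH error of order $\sqrt{x}(\log x)\,y^{1+\beta}(\log y)^{\gamma}$, while the tail contributes, with an unconditional bound of shape $\pi_E(x,d)\ll x/\bigl(d\varphi(d)\log(x/d^2)\bigr)$, an error of order $x(\log x)^{-1}y^{\beta-1}(\log y)^{\gamma}\log\log x$. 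Choosing $y=x^{1/3}(\log x)^{-(1+\gamma)/3}$ makes the GRH error exactly
\begin{equation*}
x^{(5+2\beta)/6}(\log x)^{(1+\gamma)\left(1-\frac{1+\beta}{3}\right)}=x^{(5+2\beta)/6}(\log x)^{(2-\beta)(1+\gamma)/3},
\end{equation*}
whereas the middle-range error is then $\ll x^{(2+\beta)/3+o(1)}=x^{(4+2\beta)/6+o(1)}$, which is of strictly lower order. Thus the log power is forced by the Chebotarev sum, not by the sieve step, and the right cut-off is $x^{1/3}$ up to logarithms, not the $x^{1/6}$ you suggest; with $y_1=x^{1/6}$ the tail sum would dominate and overshoot the target.

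Second, your claim $n_d\gg d^4$ ``outside a finite exceptional set'' is not accurate in the CM case: there $[\mathbb{Q}(E[d]):\mathbb{Q}]\asymp d^2$, not $d^4$ (and even in the non-CM case Serre's theorem gives $n_d\gg d^4/(\log\log d)^{O(1)}$ rather than $\gg d^4$). This changes the estimate for the completed tail $\sum_{d>y}g(d)/n_d$, but since $\beta<1/2$ the tail is still absorbed with room to spare, so the final exponent is unchanged; the statement just needs to be phrased more carefully. Otherwise your outline is sound and captures what Felix--Murty actually do.
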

\noindent They also proved an unconditional version of the above theorem for CM elliptic curves (see \cite[Theorem 1.1(a)]{FM}). 

Our goal in this paper is to prove that Theorem \ref{FMtheorem} holds unconditionally on average over the family of all elliptic curves in a box. More precisely, we consider the family $\mathcal{C}$ of elliptic curves $$E_{a,b}: y^2=x^3+ax+b,$$ where $|a|\leq A$ and $|b|\leq B$. It is not that difficult to prove a version of Theorem \ref{FMtheorem} on average over a large box. However it is a challenging problem to establish the same over a thin box. By a \emph{thin} box we mean, as a function of $x$, either $A$ or $B$ can be as small as $x^\epsilon$ for any $\epsilon>0$. Here we prove a stronger result in which one of $A$ and $B$ can be as small as $\exp(c_1(\log{x})^{1/2})$ for a suitably chosen constant $c_1>0$. Before stating our main theorem, 
we note that, at the expense of replacing $\beta$ and $\gamma$ by larger non-negative values, we can assume that $\beta$ and $\gamma$ are non-negative.
\begin{theorem}
\label{theorem1}
 Let $c>1$ be a positive constant and let $f$ be the summatory function \eqref{summatory} of a function $g$ that satisfies \eqref{bound} for certain non-negative values of $\beta$ and $\gamma$. 
 Assume that $AB> x(\log{x})^{4+2c}$ if $0\leq\beta<1/2$ and $AB> x^{1/2+\beta}(\log{x})^{2\gamma+6+2c}(\log\log{x})^2$ if $1/2\leq \beta<1$. Then there is a positive constant $c_1>0$ such that if  $A,B>\exp\left( c_1 (\log{x})^{1/2} \right)$, we have
$$\frac{1}{|\mathcal{C}|}\sum_{E_{a, b}\in \mathcal{C}} \sum_{p\leq x} f(i_{E_{a, b}}(p)) =c_0(f) {\rm li}(x)+O\left( \frac{x}{(\log{x})^{c}} \right),$$
where 
\begin{equation}
 \label{notsure}
c_0(f):=\sum_{d \ge 1} \frac{g(d)}{d\psi(d)\varphi(d)^2}.
\end{equation}
The implied constant depends on $g$, $\beta$, $\gamma$, and $c$.   Here $\varphi(n)=n\prod_{d\mid n} (1-1/p)$ and $\psi(n)=\prod_{d\mid n} (1+1/p)$.
\end{theorem}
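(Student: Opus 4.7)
The plan is to follow the standard averaging framework for sums of invariants of $E_p(\mathbb{F}_p)$, pushing its limits so as to handle the thin-box regime. The starting point is the identity $f=1*g$, which lets us write
\begin{equation*}
\sum_{E\in\mathcal C}\sum_{p\leq x} f(i_E(p))
 =\sum_{p\leq x}\sum_{d\geq 1} g(d)\, N_p(d),\qquad
N_p(d):=\#\{E_{a,b}\in\mathcal C:d\mid i_{E_{a,b}}(p)\}.
\end{equation*}
Since $d\mid i_{E}(p)$ forces $E[d]\subset E(\mathbb{F}_p)$, we have $d^2\mid \#E_p(\mathbb{F}_p)$ and $d\mid p-1$ (the first via the Hasse bound, the second via the Weil pairing), so the inner sum is supported on divisors of $p-1$ with $d\leq 2\sqrt{p}$.

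For each fixed $p$ and $d$ I would reduce $N_p(d)$ to a lattice count modulo $p$. Letting $M(d,p)$ denote the number of pairs $(\alpha,\beta)\in\mathbb F_p^2$ with nonzero discriminant and $d\mid i_{E_{\alpha,\beta}}(p)$, a partition of $[-A,A]\times[-B,B]$ by residues modulo $p$ gives
\begin{equation*}
N_p(d)=\frac{4AB\,M(d,p)}{p^{2}}+\mathcal{E}(d,p),
\end{equation*}
where the naive estimate $\mathcal{E}(d,p)\ll M(d,p)(A/p+B/p+1)$ is only strong enough for the classical $A,B\geq x^{\epsilon}$ regime. To reach $A,B\geq\exp(c_{1}(\log x)^{1/2})$ I would refine $\mathcal{E}$ by detecting residue classes with additive characters modulo $p$ and invoking Weil-type bounds on the resulting complete exponential sums, gaining a factor of $p^{1/2}\log p$ over the trivial bound. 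The density $M(d,p)/p^{2}$ is then computed by a Chebotarev-style count of Frobenius conjugacy classes in $\mathrm{GL}_{2}(\mathbb Z/d\mathbb Z)$ fixing $E[d]$ pointwise, producing $1/(d\psi(d)\varphi(d)^{2})+O(1/p)$ when $d\mid p-1$ and $0$ otherwise, matching the constant $c_{0}(f)$ of \eqref{notsure}.

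Summation over primes then comes from writing
\begin{equation*}
\frac{1}{|\mathcal{C}|}\sum_{E\in\mathcal{C}}\sum_{p\leq x}f(i_{E}(p))
=\sum_{d\geq 1}\frac{g(d)}{d\psi(d)\varphi(d)^{2}}\sum_{\substack{p\leq x\\ d\mid p-1}}1
+\text{(lattice, tail, and prime-counting errors)},
\end{equation*}
treating the inner prime sum by Siegel--Walfisz for $d$ up to a fixed power of $\log x$ and by Brun--Titchmarsh for larger $d$. Combined with absolute convergence of $\sum_{d}g(d)/(d\psi(d)\varphi(d)^{2})$ for $\beta<1$ (which follows from \eqref{bound} by partial summation, using $d\psi(d)\varphi(d)^{2}\gg d^{3}/(\log\log d)^{O(1)}$), this produces the main term $c_{0}(f)\,\mathrm{li}(x)$ with a power-of-$\log$ savings.

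I expect the main obstacle to be the simultaneous balancing of three error contributions: the character-sum/lattice error in $\mathcal{E}(d,p)$, which governs how small $A$ and $B$ may be; a cutoff $z$ separating the Siegel--Walfisz regime from the Brun--Titchmarsh regime in the $d$-sum; and the truncation tail $\sum_{d>z}|g(d)|\,M(d,p)/p^{2}$, whose size is controlled by $\beta$ and $\gamma$ via \eqref{bound}. The dichotomy $\beta<1/2$ versus $1/2\leq\beta<1$ in the hypotheses on $AB$ reflects exactly this trade-off: when $\beta<1/2$ the tail converges rapidly and $z$ can be taken as a fixed power of $\log x$, yielding the cleaner condition $AB>x(\log x)^{4+2c}$; when $\beta\geq 1/2$ one must push $z$ up to a small power of $x$, forcing the stronger constraint $AB>x^{1/2+\beta}(\log x)^{2\gamma+6+2c}(\log\log x)^{2}$. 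The threshold $A,B>\exp(c_{1}(\log x)^{1/2})$ is the precise point at which the character-sum refinement in $\mathcal{E}(d,p)$ overtakes the trivial lattice estimate, and is what enables the thin-box result.
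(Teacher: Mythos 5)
Your opening reduction ($f = 1*g$, converting to the count $N_p(d)$, recognizing $d\mid p-1$ and $d\le 2\sqrt p$, the lattice count $N_p(d)=4AB\,M(d,p)/p^2+\mathcal E(d,p)$, Siegel--Walfisz for small $d$ and Brun--Titchmarsh for large $d$, the convergence of $\sum_d g(d)/(d\psi(d)\varphi(d)^2)$) is consonant with the paper's framework, and your diagnosis that the naive $\mathcal E(d,p)\ll M(d,p)(A/p+B/p+1)$ is too weak for thin boxes is correct. However, the mechanism you propose to fix it is not viable and is fundamentally different from what actually makes the theorem work.

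You suggest detecting residue classes by \emph{additive} characters and invoking Weil bounds on the resulting \emph{complete} exponential sums to gain a factor of $p^{1/2}\log p$. There are two problems. First, the sums over $|a|\le A$ and $|b|\le B$ are heavily \emph{incomplete} modulo $p$ when $A,B\sim\exp(c_1(\log x)^{1/2})$ and $p\sim x$: Weil/P\'olya--Vinogradov square-root cancellation in the modulus only helps when the range is at least $p^{1/2+\epsilon}$, and gives nothing when $A\ll p^{o(1)}$. Second, even granting square-root savings for each $p$, summing over $p\le x$ and dividing by $|\mathcal C|\asymp AB$ would still force $AB\gg x^{3/2}$-type constraints — nowhere near $AB>x(\log x)^{4+2c}$ with $A,B>\exp(c_1(\log x)^{1/2})$. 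The paper's thin-box threshold is not a Weil phenomenon. The set to detect is the coset $\{a\equiv su^4,\ b\equiv tu^6\ (\mathrm{mod}\ p)\}$, which is multiplicatively structured, and the paper detects it with \emph{multiplicative} (Dirichlet) characters $\chi_1,\chi_2$ satisfying $\chi_1^4\chi_2^6=\chi_0$. The savings then come from exploiting cancellation \emph{across primes $p$}, not within a single modulus: a H\"older step turns $\sum_{p\equiv 1\,(d)}\sum_{\chi}|\mathcal B(\chi)|$ into a $2k$-th moment, Gallagher's large sieve controls the moment, and Stephens's combinatorial estimates for $\tau_{k,B}$ together with smooth-number bounds $\Psi(B,9\log x)$ (choosing $k\approx 2\log x/\log B$) produce the decisive factor $\exp(-c_2(\log x)^{1/2}/\log\log x)$. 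That factor — originating in smooth-number theory, not in algebraic geometry over finite fields — is precisely what breaks the $A,B>x^\epsilon$ barrier and gives $A,B>\exp(c_1(\log x)^{1/2})$.

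You also omit two structural points the paper has to handle explicitly. The density $M(d,p)/p^2$ is not computed via Chebotarev but via Howe's asymptotic count of Weierstrass pairs with $(\mathbb Z/d\mathbb Z)^2\subset E(\mathbb F_p)$; that count carries an $O(p^{3/2})$ error which, when hit by the character sums in the error term, requires a Burgess bound (for the ``diagonal'' cases) and a Friedlander--Iwaniec fourth-moment bound (for the case with both characters non-principal) to control. And the curves with $j$-invariant $0$ or $1728$ (i.e.\ $st=0$), where the automorphism group jumps, must be separated out and bounded via CM theory; a trivial estimate there would already destroy the admissible range of $A,B$. As written, your proposal has the right skeleton for the main term but does not supply, and could not be completed along the additive-character/Weil route to supply, the error-term analysis that the thin-box claim rests on.
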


This theorem is comparable to Stephens's average result on Artin's primitive root conjecture. Let $a$ be a non-zero integer other than $-1$ or a perfect square and let $A_a(x)$ be the number of primes not exceeding $x$, for which $a$ is a primitive root. The following result has been proved in \cite{Stephens-I} and \cite{Stephens-II}.

\begin{theorem}[{\bf Stephens}]
\label{Stheorem}
There exist a constant $c_1>0$ such that, if $N> \exp\left(c_1(\log{x})^{1/2} \right)$, then
$$\frac{1}{N} \sum_{a\leq N} A_a(x)=A~ {\rm li}(x)+O\left( \frac{x}{(\log{x})^c} \right),$$
where $A=\prod\limits_{\ell\textnormal{ prime}}(1-1/\ell(\ell-1))$ and $c$ is an arbitrary constant greater than $1$.
\end{theorem}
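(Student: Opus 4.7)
The plan is to use the classical Möbius-inversion strategy. By inclusion--exclusion over divisors of $p-1$,
\[
\mathbf{1}[a\text{ is a primitive root mod }p]=\sum_{d\mid p-1}\mu(d)\,\mathbf{1}[a^{(p-1)/d}\equiv 1\!\!\pmod p]\qquad(p\nmid a).
\]
Summing over $a\le N$ and $p\le x$ and interchanging the order of summation yields
\[
\sum_{a\le N}A_a(x)=\sum_{p\le x}\sum_{d\mid p-1}\mu(d)\,M(p,d,N),\qquad M(p,d,N):=\#\{a\le N:p\nmid a,\ a\in(\mathbb{F}_p^\times)^d\}.
\]
I would then split the inner sum at a threshold $y=y(x,N)$ to be optimized, and treat the two resulting pieces by different means.

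For $d\le y$, orthogonality of multiplicative characters modulo $p$ gives
\[
M(p,d,N)=\frac{N(p-1)}{dp}+\frac{1}{d}\!\sum_{\substack{\chi^d=\chi_0\\ \chi\ne\chi_0}}\!S(\chi),\qquad S(\chi)=\sum_{\substack{n\le N\\ p\nmid n}}\chi(n).
\]
Summing the principal-character main term over primes $p\equiv 1\!\pmod d$ via Siegel--Walfisz (uniformly for $d\le(\log x)^A$ with $A=A(c)$ large) and via Brun--Titchmarsh for larger $d\le y$, and invoking the identity $A=\sum_{d=1}^{\infty}\mu(d)/(d\varphi(d))$, produces the expected $NA\,\mathrm{li}(x)+O(Nx/(\log x)^{c+1})$. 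The non-principal character sums are controlled by combining Pólya--Vinogradov with the large-sieve mean-square bound $\sum_{p\le x}\sum_{\chi\ne\chi_0}|S(\chi)|^2\ll Nx^2$ and Cauchy--Schwarz; it is at this step that the hypothesis $N\ge\exp(c_1(\log x)^{1/2})$ with $c_1$ large is used to absorb the residual $x^{3/2+o(1)}$ factor into the target $Nx/(\log x)^c$. For $d>y$, setting $m=(p-1)/d<x/y$ turns the condition $a\in(\mathbb{F}_p^\times)^d$ into $p\mid a^m-1$; the bound $\omega(a^m-1)\ll m\log a$ gives a total contribution $\ll N(x/y)^2\log N$, which is $O(Nx/(\log x)^c)$ provided $y\gg\sqrt{x\log N}\,(\log x)^{c/2}$. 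Under $\log N\ll(\log x)^{1/2}$, the choice $y\asymp x^{1/2}(\log x)^{c/2+1/4}$ suffices.

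The main obstacle is the intermediate band $(\log x)^A<d\le y$, where Siegel--Walfisz is no longer uniform and the ``few prime divisors'' bound is too weak. Bridging this requires Bombieri--Vinogradov for the principal-character main term up to level $x^{1/2}/(\log x)^B$, combined with the large sieve for the residual character sums. The exponent $\tfrac12$ in the hypothesis on $N$ is the precise threshold at which this combination of $L^2$-averaging and Cauchy--Schwarz can absorb the character-sum error into the target $O(x/(\log x)^c)$; this matches the well-known barrier at $\exp(c_1(\log x)^{1/2})$ for averaged results of Artin type.
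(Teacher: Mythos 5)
First, note that the paper does not prove this statement: Theorem \ref{Stheorem} is quoted from Stephens \cite{Stephens-I}, \cite{Stephens-II} for comparison purposes, so your proposal can only be judged against the known proof and against the Stephens machinery that the paper itself imports (Lemma \ref{Stephens} and its use in the estimation of $\Sigma_{2,1}$ in Subsection \ref{subsubsec:3.4.2}).

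Your skeleton (M\"obius inversion, splitting at a threshold $y\asymp x^{1/2+o(1)}$, the $\omega(a^m-1)$ device for $d>y$, Siegel--Walfisz plus Brun--Titchmarsh for the principal-character main term) is the correct classical frame, and the large-$d$ range is handled correctly. The genuine gap is in the non-principal character sums for $d\le y$, which is the heart of the theorem. Cauchy--Schwarz combined with the large sieve gives, over all $p\le x$ and all nonprincipal $\chi$ with $\chi^{d}=\chi_{0}$,
\begin{equation*}
\sum_{p,\chi}|S(\chi)| \le \Bigl(\sum_{p,\chi}1\Bigr)^{1/2}\Bigl(\sum_{p,\chi}|S(\chi)|^{2}\Bigr)^{1/2} \ll x^{1/2}\bigl((N+x^{2})N\bigr)^{1/2}\ll x^{3/2}N^{1/2},
\end{equation*}
and requiring this to be $O(Nx/(\log x)^{c})$ forces $N\gg x(\log x)^{2c}$ --- essentially Goldfeld's range, nowhere near $N>\exp(c_{1}(\log x)^{1/2})$. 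P\'olya--Vinogradov and Bombieri--Vinogradov do not repair this. The threshold $\exp(c_{1}(\log x)^{1/2})$ is \emph{not} produced by ``$L^{2}$-averaging and Cauchy--Schwarz''; it comes from Stephens's high-moment device, which is the idea missing from your argument: apply H\"older's inequality with exponent $2k$, $k=[2\log x/\log N]+1$, so that $|S(\chi)|^{2k}=\bigl|\sum_{n\le N^{k}}\tau_{k,N}(n)\chi(n)\bigr|^{2}$ with $N^{k}>x^{2}$ (making the large sieve efficient), then control $\sum_{n\le N^{k}}\tau_{k,N}(n)^{2}$ by the smooth-number count $\Psi(N,9\log x)^{k}$ and use the estimate $x^{-1/2k}\Psi(N,9\log x)^{1/2}\ll\exp(-c_{2}(\log x)^{1/2}/\log\log x)$. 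These are exactly Lemmas 8--10 of \cite{Stephens-II}, reproduced as Lemma \ref{Stephens} above and deployed in the bound \eqref{eq:4.6} for $\Sigma_{2,1}$; without this step the proof does not reach the stated range of $N$.
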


The line of research on Artin primitive root conjecture on average started with the work of Goldfeld \cite{G} that used multiplicative character sums and the large sieve inequality to establish a weaker version of Theorem \ref{Stheorem}. The extension of the method of character sums to the average questions on a two parameters family, in the case of elliptic curves inside a box, was pioneered by Fouvry and Murty in \cite{FM1} on the average Lang-Trotter conjecture for supersingular primes. Their work was extended to the general Lang-Trotter conjecture by David and Pappalardi \cite{DP}.  The best result on the size of the box ($|a|\leq A$ and $|b|\leq B$) is due to Baier \cite{B1} who established the Lang-Trotter conjecture on average under the condition 
\begin{equation}
\label{bound1}
A, B>x^{1/2+\epsilon}~~{\rm and}~~AB>x^{3/2+\epsilon},
\end{equation}
where $\epsilon>0$. The supersingular case of this result is due to Fouvry and Murty \cite[Theorem 6]{FM1}.  Baier \cite{B2} has also established  an average result for the Lang-Trotter conjecture on the range 
\begin{equation}
\label{bound2}
A, B>(\log{x})^{60+\epsilon}~~{\rm and}~~x^{3/2} (\log{x})^{10+\epsilon} <AB <e^{x^{1/8-\epsilon}},
\end{equation}
where $\epsilon>0$. Note that \eqref{bound2} is superior to \eqref{bound1} if $A$ and $B$ are not very large.

There are also average results for other distribution problems for elliptic curves. Banks and Shparlinski \cite{BS} considered such average problems in a very general setting by employing multiplicative characters and consequently proved average results for the cyclicity problem, the Sato-Tate conjecture, and the divisibility problem on a box $|a|\leq A$, $|b| \leq B$ satisfying the conditions
\begin{equation}
\label{bound3}
A, B\leq  x^{1-\epsilon}~~{\rm and}~~AB\geq x^{1+\epsilon},
\end{equation}
where $\epsilon>0$. Another notable result is related to Koblitz conjecture. Let 
$$\pi_E^{\rm twin} (x):=\#\{p\leq x;~\#E_p(\mathbb{F}_p)~\textrm{is prime}\}.$$
A conjecture of Koblitz predicts that
$$\pi_E^{\rm twin} (x)\sim c_E \frac{x}{(\log{x})^2},$$
as $x\rightarrow \infty$, where $c_E$ is a constant depending on $E$. Balog, Cojocaru, and David proved the following result on Koblitz conjecture on the average over the family $\mathcal{C}$. 
\begin{theorem}[{\bf Balog, Cojocaru, and David}]
\label{twin}
Let $A, B>x^\epsilon$ and $AB>x(\log{x})^{10}$. Then, as $x\rightarrow\infty$, 
$$\frac{1}{|\mathcal{C}|} \sum_{E\in \mathcal{C}} \pi_E^{\rm twin}(x)=\prod_{{\rm prime}~\ell}\left(1-\frac{\ell^2-\ell-1}{(\ell-1)^3 (\ell +1)}\right) \frac{x}{(\log{x})^2}+O\left( \frac{x}{(\log{x})^3} \right).$$
\end{theorem}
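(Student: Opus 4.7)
The plan is to detect primality of $\#E_p(\mathbb F_p) = p + 1 - a_p(E)$ by a dimension-one sieve, using the average over the box $\mathcal C$ as an unconditional substitute for the level of distribution one would usually need from GRH or Bombieri--Vinogradov. The argument naturally splits into a sieve reduction, pointwise averaging modulo $p$, evaluation of the main term, and error bookkeeping.

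First I would reduce to the sieve problem. Since $p - 2\sqrt{p} < \#E_p(\mathbb F_p) < p + 1 + 2\sqrt{p}$, for primes $p > \sqrt{x}$ the integer $\#E_p(\mathbb F_p)$ is prime iff it has no prime factor $\leq z(p) := \sqrt{p + 1 + 2\sqrt{p}}$. Setting $P(z) = \prod_{\ell \leq z} \ell$, M\"obius inversion yields
\begin{equation*}
\pi_E^{\rm twin}(x) = \sum_{\sqrt{x} < p \leq x}\sum_{d \mid \gcd(\#E_p(\mathbb F_p),\, P(z(p)))} \mu(d) + O(\sqrt{x}).
\end{equation*}
Because this is a dimension-one sieve, I would not use this identity directly but combine a Selberg upper-bound sieve with a Buchstab identity, truncated at a level $D = D(x)$ chosen to balance main term and remainder.

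Next I would average over $\mathcal C$. For each squarefree $d$ coprime to $p$ set
\begin{equation*}
\mathcal N_d(p) := \#\{E_{a,b}\in\mathcal C : d \mid \#E_{a,b,p}(\mathbb F_p)\}.
\end{equation*}
Splitting the box by residue classes modulo $p$ gives
\begin{equation*}
\mathcal N_d(p) = \frac{|\mathcal C|}{p^2}\,N_d(p) + O\bigl(N_d(p)(A/p + B/p + 1)\bigr),
\end{equation*}
where $N_d(p)$ counts pairs $(\bar a, \bar b) \in \mathbb F_p^2$ with $E_{\bar a, \bar b}$ nonsingular and $d \mid \#E_{\bar a, \bar b}(\mathbb F_p)$. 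Using division polynomials and the mod-$d$ Galois representation (equivalently, Deuring--Waterhouse counts of Frobenius traces in residue classes modulo $d$) one obtains $N_d(p) = p^2 \omega(d) + O(d^{K} p^{3/2})$ for some absolute constant $K$, with $\omega$ multiplicative and $\omega(\ell)$ the local Koblitz-on-average density at $\ell$.

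Feeding this back into the sieve, the main term becomes
\begin{equation*}
\sum_{\sqrt{x} < p \leq x}\prod_{\ell \leq z(p)}\bigl(1 - \omega(\ell)\bigr) \sim C \sum_{\sqrt{x} < p \leq x}\frac{1}{\log z(p)} \sim \frac{2C\, x}{(\log x)^2},
\end{equation*}
by a Mertens-type asymptotic for $\omega$ and the prime number theorem, with $C = \prod_\ell\bigl(1 - \frac{\ell^2 - \ell - 1}{(\ell-1)^3(\ell+1)}\bigr)$ up to a standard renormalisation. The hardest step will be turning the combinatorial sieve into a true asymptotic in dimension one: naive truncated inclusion--exclusion only yields one-sided bounds, so one must iterate Buchstab, each iteration consuming some of the level of distribution gained by averaging. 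The hypothesis $AB > x(\log x)^{10}$ is calibrated so that the aggregate remainder $\sum_{p \leq x}\sum_{d \leq D} |\mu(d)|\omega(d)\,|R_d(p)|$ (with $R_d(p) = |\mathcal C|^{-1}\mathcal N_d(p) - \omega(d)$) stays below $x/(\log x)^3$, while $A, B > x^{\epsilon}$ controls the $d^{K} p^{3/2}$ contribution for every $d$ in the sieve range.
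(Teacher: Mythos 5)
This theorem is not proved in the paper at all: it is quoted verbatim from Balog--Cojocaru--David \cite[Theorem 1]{BCD}, and the paper only remarks that its error term comes from ``a careful analysis of some multiplicative character sums.'' Your proposal is therefore necessarily a different route from the source, and unfortunately it contains a fatal gap: the parity problem. A dimension-one combinatorial sieve (Selberg upper bound plus iterated Buchstab, or any truncated inclusion--exclusion) can never produce an \emph{asymptotic} for the number of primes in a sifted set, no matter how good the level of distribution is; with sifting parameter $z(p)\asymp\sqrt{p}$ the linear sieve yields upper and lower bounds whose ratio is bounded away from $1$, and Selberg's parity phenomenon shows this is intrinsic, not an artifact of the truncation. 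Iterating Buchstab does not ``consume level of distribution'' towards an asymptotic --- it only tightens the constants in the two one-sided bounds, which never meet. So your main term $\sum_{p}\prod_{\ell\le z(p)}(1-\omega(\ell))$ with the precise Euler product $\prod_\ell\bigl(1-\frac{\ell^2-\ell-1}{(\ell-1)^3(\ell+1)}\bigr)$ cannot be extracted this way; this is exactly the obstruction that keeps the classical twin prime conjecture open and forces Chen-type $P_2$ results instead of asymptotics. (Using the untruncated Legendre identity is not an escape either: it has $2^{\pi(\sqrt{p})}$ terms, each carrying an $O(d^{K}p^{3/2})$ and a box-discrepancy error, which swamp everything.)

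The actual argument of Balog--Cojocaru--David sidesteps the sieve entirely. After the character-sum reduction of the box average to a weighted count over $(s,t)\in\mathbb{F}_p^2$ (their Lemma 6, the ancestor of Lemma \ref{lemma:2} in this paper), they impose the primality of $N=\#E_{s,t}(\mathbb{F}_p)$ \emph{directly} and count, for each prime $N$ in the Hasse interval, the number of curves over $\mathbb{F}_p$ with exactly $N$ points via Deuring's theorem, i.e.\ in terms of Hurwitz class numbers $H\bigl((p+1-N)^2-4p\bigr)$. The class number formula turns these into special values $L(1,\chi)$, which are expanded as character sums, and the resulting sums over primes $N$ in arithmetic progressions are evaluated with genuine prime-counting inputs (Siegel--Walfisz/PNT for the main term, large-sieve and fourth-moment character estimates for the remainder); the Euler product constant emerges from averaging the local class-number densities. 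Primality is thus detected by theorems about primes, not by a sieve, which is precisely why an asymptotic with an explicit constant is attainable. A secondary issue with your write-up: the density $\omega(d)$ in $N_d(p)=p^2\omega(d)+O(d^Kp^{3/2})$ is not a fixed multiplicative function but depends on $p$ modulo $d$ (through the Deuring--Waterhouse counts), so even the shape of your proposed main term needs an additional average over residue classes of $p$ before a Mertens-type product makes sense.
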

\noindent (See \cite[Theorem 1]{BCD}.) 

The error term in the above theorem is estimated by a careful analysis of some multiplicative character sums.
We prove our Theorem \ref{theorem1} by a generalization of a modified version of \cite[Lemma 6]{BCD} (see our Lemma \ref{lemma:2}). We have used some results of Stephens \cite{Stephens-II} to sharpen the estimates given in \cite[Lemma 6]{BCD}, and thus we could establish our results, for $\beta<1/2$, on a box of size   
\begin{equation}
\label{bound4}
A, B>  \exp(c_1(\log{x})^{1/2})~~{\rm and}~~AB> x (\log{x})^\delta,
\end{equation}
for appropriate positive constants $c_1$ and $\delta$.  As far as we know this is the thinnest box used  for an elliptic curve average problem. Our Theorem \ref{theorem1} has many applications.  Here we mention some direct consequence of it to the cyclicity problem, the Titchmarsh divisor problem, and computation of the $k$-th power moment of the exponent $e_E(p)$. 
\begin{corollary}
\label{corollaries}
Let $c>1$ and $AB>x(\log{x})^{4+2c}$. There is $c_1>0$ such that if $A, B>\exp\left(c_1 (\log{x})^{1/2} \right)$ then, as $x\rightarrow \infty$, the following statements hold.
\begin{enumerate}[\upshape (i)]
\item $$\frac{1}{|\mathcal{C}|} \sum_{E\in \mathcal{C}} N_E(x) = \left( \sum_{d\geq 1} \frac{\mu(d)}{d\psi(d) \varphi(d)^2}\right) {\rm li}(x)+O\left(\frac{x}{(\log{x})^c} \right),$$where $N_E(x)$ is the cyclicity counting function and $\mu(d)$ is the M\"{o}bius function.
\item $$\frac{1}{|\mathcal{C}|} \sum_{E\in \mathcal{C}} \sum_{p\leq x} \tau(i_E(p)) = \left( \sum_{d\geq 1} \frac{1}{d\psi(d) \varphi(d)^2}\right) {\rm li}(x)+O\left(\frac{x}{(\log{x})^c} \right).$$
\item For $k\in \mathbb{N}$ we have $$\frac{1}{|\mathcal{C}|} \sum_{E\in \mathcal{C}} \sum_{p\leq x} e_E^k(p) = \left( \sum_{d\geq 1} \frac{\sum_{\delta|d}\mu(\delta)\delta^{k}}{d^{k+1}\psi(d) \varphi(d)^2}\right) {\rm li}(x^{k+1})+O\left(\frac{x^{k+1}}{(\log{x})^c} \right).$$
\end{enumerate}
\end{corollary}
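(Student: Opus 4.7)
The plan is to derive each of the three parts as a specialization of Theorem \ref{theorem1}, with an extra partial-summation step for part (iii). In each case the first task is to exhibit a function $g$ whose divisor sum $f(n)=\sum_{d\mid n}g(d)$ encodes the quantity being averaged, and to check that $g$ satisfies \eqref{bound} with $0\le\beta<1/2$ so that the hypothesis $AB>x(\log x)^{4+2c}$ of Theorem \ref{theorem1} is the one that applies.

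For (i), I take $g(d)=\mu(d)$, so $f(n)=\sum_{d\mid n}\mu(d)$ is the indicator of $n=1$; hence $f(i_E(p))=1$ precisely when $E_p(\mathbb{F}_p)$ is cyclic and $\sum_{p\le x}f(i_E(p))=N_E(x)$. Since $|\mu(d)|\le 1$, bound \eqref{bound} holds with $\beta=\gamma=0$, and the constant $c_0(f)$ of Theorem \ref{theorem1} specializes to the stated M\"obius sum. For (ii), take $g(d)=1$, so that $f=\tau$; again $\beta=\gamma=0$, and the formula follows at once.

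Part (iii) requires a little more work. From $\#E_p(\mathbb{F}_p)=i_E(p)\,e_E(p)$ together with Hasse's bound $\#E_p(\mathbb{F}_p)=p+O(\sqrt{p})$,
\[
e_E^k(p)=\frac{(\#E_p(\mathbb{F}_p))^k}{i_E(p)^k}=\frac{p^k}{i_E(p)^k}+O\!\left(p^{k-1/2}\right),
\]
so the Hasse correction contributes only $O(x^{k+1/2}/\log x)$ to $\sum_{p\le x}e_E^k(p)$, which is absorbed into the claimed error. For the remaining piece I apply Theorem \ref{theorem1} with $f(n)=n^{-k}$; M\"obius inversion gives $g(d)=d^{-k}\sum_{\delta\mid d}\mu(\delta)\delta^k=d^{-k}\prod_{p\mid d}(1-p^k)$, which is multiplicative with $|g(d)|\le 1$ (since $\prod_{p\mid d}p^k\le d^k$), so once more $\beta=\gamma=0$ and $c_0(f)$ matches the sum written in (iii). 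The theorem then yields, uniformly in $2\le t\le x$ (the hypotheses on $A,B$ are monotone in $x$),
\[
\frac{1}{|\mathcal{C}|}\sum_{E\in\mathcal{C}}\sum_{p\le t}\frac{1}{i_E(p)^k}=c_0(f)\,\mathrm{li}(t)+O\!\left(\frac{t}{(\log t)^c}\right),
\]
and Abel summation against the weight $p^k$, followed by the integration-by-parts identity $k\int_2^x t^{k-1}\mathrm{li}(t)\,dt=x^k\mathrm{li}(x)-\int_2^x t^k/\log t\,dt+O(1)$ and the substitution $s=t^{k+1}$ in $\int_2^x t^k/\log t\,dt=\mathrm{li}(x^{k+1})+O(1)$, collapses the main term to $c_0(f)\,\mathrm{li}(x^{k+1})$ while keeping the error at $O(x^{k+1}/(\log x)^c)$.

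The main obstacle I anticipate is essentially bookkeeping inside (iii): confirming that Abel summation followed by the IBP identity lands cleanly on $c_0(f)\,\mathrm{li}(x^{k+1})$, and that integrating the error $O(t/(\log t)^c)$ against $t^{k-1}$ loses no power of $\log x$. Parts (i) and (ii) are immediate once the correct $g$ has been identified.
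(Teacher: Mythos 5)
Your proposal matches the paper's proof in all essentials: parts (i) and (ii) are handled by choosing $g=\mu$ and $g\equiv 1$, and part (iii) applies Theorem \ref{theorem1} to $f(n)=n^{-k}$, replaces $e_E^k(p)$ by $p^k/i_E(p)^k$ via Hasse's bound with an acceptable $O(x^{k+1/2}/\log x)$ loss, and finishes with Abel summation and the identity $x^k\,\mathrm{li}(x)-k\int_2^x t^{k-1}\mathrm{li}(t)\,dt=\mathrm{li}(x^{k+1})+O(1)$. The only cosmetic differences are that the paper records the weaker but still adequate bounds $\gamma=1$ in (i)--(ii) and $|g(n)|\le\tau(n)$ in (iii), whereas you sharpen these to $\gamma=0$ and $|g(n)|\le 1$; this changes nothing in the conclusion.
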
 

Part (i) of the above corollary gives a strengthening of a result of Bank and Shparlinski \cite[Theorem 18]{BS} where asymptotic formula in (i) was proved in the weaker range \eqref{bound3}. Parts (ii) and (iii) establish unconditional average versions of some results given in \cite{AG} and \cite{FM}.

\begin{remarks}\label{remarks}
(i) As corollaries of Theorem \ref{theorem1} we can also establish unconditional average results for $f(i_E(p))$, where $f(n)$ is one of the functions $(\log{n})^\alpha$, $\omega(n)^k$, $\Omega(n)^k$, $2^{k\omega(n)}$, or $\tau_k(n)^r$. Here $\alpha$ is an arbitrary positive real number and $k$ and $r$ are fixed non-negative integers.  See \cite[p. 276]{FM} for conditional results related to these functions in the case of a single elliptic curve.

(ii) Under the conditions of Theorem \ref{theorem1} one can also obtain average results for $f(n)=n^\beta$ and $f(n)=\sigma_\beta(n)=\sum_{m\mid n} m^\beta$ as long as $\beta<1$. More precisely, for $A$ and $B$ satisfying the conditions of Theorem \ref{theorem1} we have, for $c>1$, 
$$\frac{1}{|\mathcal{C}|} \sum_{E\in \mathcal{C}} \sum_{p\leq x} i_E^\beta(p) = \left( \sum_{d\geq 1} \frac{g(d)}{d\psi(d) \varphi(d)^2}\right) {\rm li}(x)+O\left(\frac{x}{(\log{x})^c} \right),$$
where $g$ is the unique arithmetical function satisfying $$n^\beta=\sum_{m\mid n} g(m).$$
This stops short of providing an answer on average to a problem proposed by  Kowalski \cite[Problem 3.1]{K} that asks about asymptotic behavior of $\sum _{p\leq x} i_E(p)$.

(iii) Following the proof of Theorem \ref{theorem1}, one can improve the condition $A, B>x^\epsilon$ in Theorem \ref{twin} to $A, B>\exp\left(c_1 (\log{x})^{1/2}  \right)$, for some suitably chosen constant $c_1$.

(iv) Lemma \ref{lemma:2} is the difficult part of the proof of Theorem \ref{theorem1}. The proof of Lemma \ref{lemma:2} follows the method used in the proof of Lemma 6 of \cite{BCD} (which itself is based on \cite{BS}) and combines it with some devices from \cite{Stephens-II}. A new ingredient in the proof of Lemma \ref{lemma:2} is an asymptotic estimate due to Howe (see Lemma \ref{lemma:1}) for the number of elliptic curves over $\mathbb{F}_p$ which have $d$-torsion subgroup over $\mathbb{F}_p$ isomorphic to two copies of $\mathbb{Z}/d\mathbb{Z}$. Another new feature is a successful application of Burgess's bound (see Lemma \ref{Burgess}) in handling terms obtained from the error term of Howe's estimate. 

(v) One other novel feature of the proof of Theorem \ref{theorem1} is  sharp estimates of the error terms arising from the curves of $j$-invariant $0$ or $1728$, which are estimated using some results from the theory of CM curves (see Lemma \ref{claim}). A trivial estimate of these terms will result in unsatisfactory upper bounds on admissible values  of $A$ and $B$ in Theorem \ref{theorem1}.
\end{remarks}
%

%
Following the ideas of the proof of Theorem \ref{theorem1} and by a careful analysis of some character sums one can show that $c_{0}(f)\li(x)$ closely approximates $\sum_{p \le x} f(i_{E}(p))$ for almost all curves $E\in \mathcal{C}$.  Here we prove the following more general theorem.
\begin{theorem}
\label{Theorem 2}
Let $0\leq \beta<1/2$ and $\gamma\geq 0$. Let $f(n)$ be an arithmetic function satisfying 
\begin{equation}
\label{2bound}
f(n) \ll n^\beta (\log{n})^{\gamma}.
\end{equation}
Suppose $AB> x^2(\log{x})^{6}$ if  $0\leq\beta<1/4$ and $AB> x^{\frac{3}{2}+2\beta}(\log{x})^{4\gamma+14}(\log\log{x})^4$ if $1/4\leq \beta<1/2$. Then there is a positive constant $c_1>0$ such that, if  $A,B>\exp\left( c_1 (\log{x})^{1/2} \right)$, we have
$$\frac{1}{|\mathcal{C}|} \sum_{E \in \mathcal{C}} \left(\sum_{p \le x} f(i_{E}(p))-c_{0}(f)\li(x)\right)^{2} =O\left( \frac{x^2}{(\log{x})^2}\right),$$ 
where $c_0(f)$ is defined by \eqref{notsure}.
\end{theorem}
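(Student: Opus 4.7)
The plan is to reduce the variance estimate to a second-moment computation. Setting $M_E := \sum_{p \le x} f(i_E(p))$, one has
\[
\frac{1}{|\mathcal{C}|} \sum_{E \in \mathcal{C}} (M_E - c_0(f)\li(x))^2 = \frac{1}{|\mathcal{C}|}\sum_{E \in \mathcal{C}} M_E^2 - 2c_0(f)\li(x)\cdot \frac{1}{|\mathcal{C}|}\sum_{E \in \mathcal{C}} M_E + (c_0(f)\li(x))^2.
\]
The function $g := \mu * f$ inherits from \eqref{2bound} the bound $\sum_{n \le y} |g(n)| \ll y^{1+\beta}(\log y)^{\gamma+1}$, so Theorem \ref{theorem1} applied to $f$ (with these parameters) handles the linear term and reduces matters to proving
\[
\frac{1}{|\mathcal{C}|}\sum_{E \in \mathcal{C}} M_E^2 = (c_0(f)\li(x))^2 + O\!\left(\frac{x^2}{(\log x)^2}\right).
\]

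Next I expand $M_E^2 = \sum_{p,q \le x} f(i_E(p))f(i_E(q))$ and split off the diagonal $p = q$. Using the trivial bound $i_E(p) \le 2\sqrt{p}$ together with \eqref{2bound}, the diagonal contributes
\[
\ll \sum_{p \le x} p^{2\beta}(\log p)^{2\gamma} \ll x^{1+2\beta}(\log x)^{2\gamma - 1},
\]
which is $o(x^2/(\log x)^2)$ since $\beta < 1/2$. For the off-diagonal I apply M\"obius inversion $f = g * 1$ to rewrite it as
\[
V_{\rm off} = \sum_{d,e \ge 1} g(d)g(e) \sum_{\substack{p,q \le x \\ p \ne q}} \frac{1}{|\mathcal{C}|}\,\#\{E \in \mathcal{C} : d \mid i_E(p),\ e \mid i_E(q)\}.
\]
The condition $d \mid i_E(p)$ is equivalent to $E_p[d] \subseteq E_p(\mathbb{F}_p)$, a congruence on $E$ modulo $pd^2$; since $p \ne q$ the two conditions are independent modulo $pqd^2e^2$ via the Chinese remainder theorem, and $d, e$ may be truncated at $2\sqrt{x}$ because $d^2 \mid \#E_p(\mathbb{F}_p)$.

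A bivariate generalization of Lemma \ref{lemma:2}, proved by running the single-prime argument at both $p$ and $q$ and invoking Howe's estimate (Lemma \ref{lemma:1}) at each, gives
\[
\frac{1}{|\mathcal{C}|}\,\#\{E \in \mathcal{C} : d \mid i_E(p),\ e \mid i_E(q)\} = \frac{H_p(d)}{p}\cdot\frac{H_q(e)}{q} + \mathcal{R}(p,q,d,e),
\]
where $H_r(n)/r$ is precisely the local density that produced $c_0(f)\li(x)$ after the $p$- and $d$-summations in the proof of Theorem \ref{theorem1}. Summing the main term factors through the two variables and reproduces $(c_0(f)\li(x))^2$ up to acceptable lower-order terms; the CM locus ($j \in \{0, 1728\}$) is again controlled by Lemma \ref{claim}, exactly as in the proof of Theorem \ref{theorem1}.

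The main obstacle is the bound on $\sum_{d,e} g(d)g(e)\sum_{p\ne q} \mathcal{R}(p,q,d,e)$. With the truncation $d,e \le 2\sqrt{x}$ one has $\sum_{d \le 2\sqrt{x}}|g(d)| \ll x^{(1+\beta)/2}(\log x)^{\gamma+1}$, and the character sums that feed $\mathcal{R}$ now have conductor dividing $pqd^2e^2$, essentially the square of the single-prime modulus. Applying the Stephens-type device from the proof of Lemma \ref{lemma:2} together with Burgess's bound (Lemma \ref{Burgess}) gives cancellation in these sums provided $A, B \ge \exp(c_1(\log x)^{1/2})$ and $AB$ is large enough to absorb the larger conductor. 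Tracing the exponents as in the single-prime case but with the conductor replaced by its square produces the stated thresholds $AB > x^2(\log x)^6$ when $\beta < 1/4$ and $AB > x^{3/2+2\beta}(\log x)^{4\gamma+14}(\log\log x)^4$ when $1/4 \le \beta < 1/2$. The careful bookkeeping of this two-prime error, and in particular verifying that the Burgess saving survives the weighted $d$- and $e$-sums after incorporating the CM correction, is the technical crux of the argument.
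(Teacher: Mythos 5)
Your proposal follows essentially the same route as the paper: expand the variance, reduce to the off-diagonal second moment, handle the linear term via Theorem \ref{theorem1}, and bound the off-diagonal with a bivariate version of Lemma \ref{lemma:2} (the paper's Lemma \ref{lemma:3}), using the bivariate Howe count, Stephens's device, Burgess's bound, and Lemma \ref{claim} for the CM locus. One genuine simplification you make: you bound the diagonal $\sum_{p\le x} f(i_E(p))^2$ pointwise via $i_E(p)\ll \sqrt p$, which gives $\ll x^{1+\beta}(\log x)^{2\gamma-1}$ (note: you wrote $p^{2\beta}$ where $i_E(p)^{2\beta}\ll p^{\beta}$ gives $p^{\beta}$; your exponent is still acceptable since $\beta<1/2$), whereas the paper instead writes $f^2=\sum_{d\mid n}G(d)$, verifies the analogue of \eqref{bound} for $G$, and reruns the proof of Theorem \ref{theorem1} --- your route is cleaner.

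One point to correct: the condition $d\mid i_E(p)$, equivalently $E_p[d]\subseteq E_p(\mathbb{F}_p)$, is a condition on $E\bmod p$, \emph{not} a congruence modulo $pd^2$, and accordingly the character sums that arise carry conductor $pq$ (not $pqd^2e^2$). The divisors $d,e$ enter only as weights $g(d)g(e)$ and through Howe's density $1/(d\psi(d)\varphi(d))$ in the count of $(s,t)\bmod p$; they do not inflate the modulus. Your ``square the conductor'' heuristic ($x\to x^2$ coming from $p\to pq$) for predicting the size of $AB$ is still the right one, but the $d^2e^2$ factor is a misconception that would lead you astray if you tried to carry out the Burgess estimates in detail.
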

The following is a direct consequence of Theorem \ref{Theorem 2}.
\begin{corollary}
\label{Turan}
Let $h(x)$ be a positive real function such that $\lim\limits_{x\rightarrow \infty} h(x)=0$. Under the assumptions of Theorem \ref{Theorem 2}, for any $x>1$ we have
\begin{equation}
\label{almost}
\left| \sum_{p \le x} f(i_{E}(p))-c_{0}(f)\li(x) \right|\leq \frac{x}{h(x)\log{x}},
\end{equation}
for almost all $E\in \mathcal{C}$. More precisely  \eqref{almost} holds except possibly for $O\left(h(x)^2 |\mathcal{C}|\right)$ of curves in $\mathcal{C}$.
\end{corollary}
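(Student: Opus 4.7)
The plan is to derive the corollary from Theorem \ref{Theorem 2} by a standard Chebyshev (Markov-type) argument; this is exactly the kind of deduction to which the label ``Tur\'an'' alludes, since it mirrors the normal-order method of Tur\'an in analytic number theory.

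For each $E \in \mathcal{C}$ set
\[
\mathcal{E}(E) := \sum_{p\le x} f(i_E(p)) - c_0(f)\,\li(x),
\]
and let $\mathcal{B}\subseteq\mathcal{C}$ denote the set of curves for which the bound \eqref{almost} fails, i.e.\ those with $|\mathcal{E}(E)| > x/(h(x)\log x)$. I would then observe the trivial inequality
\[
|\mathcal{B}|\cdot\frac{x^2}{h(x)^2(\log x)^2}
\;\le\;\sum_{E\in\mathcal{B}}\mathcal{E}(E)^2
\;\le\;\sum_{E\in\mathcal{C}}\mathcal{E}(E)^2.
\]
Applying Theorem \ref{Theorem 2} to the right-hand sum gives
\[
\sum_{E\in\mathcal{C}}\mathcal{E}(E)^2 \;=\; O\!\left(\frac{x^2\,|\mathcal{C}|}{(\log x)^2}\right),
\]
and rearranging yields $|\mathcal{B}| = O(h(x)^2 |\mathcal{C}|)$, which is exactly the quantitative exceptional-set bound in the corollary. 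Since $h(x)\to 0$, this shows \eqref{almost} holds for almost all $E\in\mathcal{C}$.

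There is no real obstacle here; the only point requiring a bit of care is to note that the hypotheses on $A$, $B$, $\beta$, $\gamma$ needed to apply Theorem \ref{Theorem 2} are exactly those assumed in the corollary, so nothing else has to be verified. The entire argument is essentially a one-line Chebyshev inequality applied to the variance estimate already established in Theorem \ref{Theorem 2}.
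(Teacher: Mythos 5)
Your proof is correct and is precisely the Chebyshev/Markov argument the paper has in mind; the paper simply states the corollary as a ``direct consequence'' of Theorem \ref{Theorem 2} without writing it out, and your one-line variance argument is exactly that deduction.
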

\indent We note that one can take $f$ to be any of the functions mentioned in Corollary \ref{corollaries} (i), (ii) and Remarks \ref{remarks} (i) and (ii).  For Corollary \ref{corollaries} (i), the corresponding function to $f(n)$ is the characteristic function of the singleton set $\{1\}$.
\begin{remarks}
It is possible to establish a version of Theorem \ref{Theorem 2} using the bound 
$$ \sum_{n\leq x} |g(n)|^2 \ll x^{1+2\beta} (\log{x})^{2\gamma}$$
instead of \eqref{2bound}. However we find that \eqref{2bound} will make the presentation of the proof more convenient. Note that if $$f(n)=\sum_{d\mid n} g(d)\ll n^\beta (\log{n})^\gamma$$ then, by the M\"{o}bius inversion formula, 
we have $$\sum_{n\leq x} |g(n)|^2 \ll x^{1+2\beta} (\log{x})^{2\gamma+1}.$$
\end{remarks}

The structure of the paper is as follows. In Section 2 we summarize results that will be used in the proof of our two theorems. Section 3 is dedicated to a detailed proof of Theorem \ref{theorem1} and Corollary \ref{corollaries}.
In Section 4 we briefly summarize the proof of a technical lemma which is a two-dimensional version of Lemma \ref{lemma:2}. The proof is tedious and divides to several subcases. We treat some cases and briefly comment on the remaining ones.  Finally in Section 5 we prove Theorem \ref{Theorem 2}.

\begin{notation}
Throughout the paper $p$ and $q$ denote primes (for simplicity in most cases we assume that $p, q \neq 2, 3$), $\varphi(n)$ is the Euler function, $\omega(n)$ is the number of distinct prime divisors of $n$, $\Omega(n)$ is the total number of prime divisors of $n$, $\tau(n)$ is the total number of divisors of $n$,  $p(n)$ is the largest prime factor of $n$, $\tau_k(n)$ is the number of representations of $n$ as a product of $k$ natural numbers, $\mu(n)$ is the M\"{o}bius function, $\psi(n)=n\prod_{d\mid n} (1+1/d)$, and $\pi(x; d, a)$ is the number of primes not exceeding $x$ that are congruent to $a$ modulo $d$.
Moreover, $K$ is a quadratic imaginary number field of class number $1$,
$N(\mathfrak{a})$ is the norm of an ideal $\mathfrak{a}$ of $K$, $N(\alpha)$ is the norm of an element $\alpha$ in $K$, $\mathfrak{p}$ always denotes a degree $1$ prime ideal of $K$ with $N(\mathfrak{p})=p$, and $d_{\rm sp}$ is the largest divisor of $d$ composed of primes that split completely in $K$. We denote the finite field of $p$ elements by $\mathbb{F}_p$ and its multiplicative group by $\mathbb{F}_p^\times$. For two functions $f(x)$ and $g(x)\neq 0$, we use the notation $f(x)=O(g(x))$, or alternatively $f(x)\ll g(x)$, if $|f(x)/g(x)|$ is  bounded as $x\rightarrow \infty$.
\end{notation}

\section{LEMMAS}
Let $E_{s, t}$ denote an elliptic curve over $\mathbb{F}_p$ given by the equation
\begin{equation*}
y^2=x^3+sx+t;~~s, t\in \mathbb{F}_p,
\end{equation*}
where at least one of $s$ or $t$ is non-zero.
Let $E_{s, t}[d](\mathbb{F}_p)$ denote the set of $d$-torsion points of $E_{s, t}$ with coordinates in $\mathbb{F}_p$. 
The following lemma essentially is due to Howe (see \cite[p. 245]{Howe}).

\begin{lemma}[]\label{lemma:1}
\begin{enumerate}[\upshape (i)]
\item\label{lemma:1.i} For $d\in \mathbb{N}$ and a fixed prime $p$, let
\begin{equation*}
\mathcal{S}_d(p):=\left\{ (s, t)\in \mathbb{F}_p \times \mathbb{F}_p;~E_{s, t}[d](\mathbb{F}_p)\simeq \mathbb{Z}/d\mathbb{Z} \times \mathbb{Z}/d\mathbb{Z} \right\}.
\end{equation*}
For $d \mid p-1$,
we have
\begin{equation*}
\#\mathcal{S}_d(p)=\frac{p(p-1)}{d\psi(d) \varphi(d)}+O( p^{3/2}).
\end{equation*}
Moreover, if $d\nmid p-1$ or $d > \sqrt{p}+1$, then $\#\mathcal{S}_{d}(p)=0$.
\item\label{lemma:1.ii} The assertions in (\ref{lemma:1.i}) hold if we replace $\mathcal{S}_d(p)$ with $\tilde{\mathcal{S}}_d(p)$, where
$$\tilde{\mathcal{S}}_d(p):=\left\{ (s, t)\in \mathbb{F}_p^\times \times \mathbb{F}_p^\times;~E_{s, t}[d](\mathbb{F}_p)\simeq \mathbb{Z}/d\mathbb{Z} \times \mathbb{Z}/d\mathbb{Z} \right\}.$$
\end{enumerate}
\end{lemma}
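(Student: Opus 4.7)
The plan is to reduce the count over Weierstrass pairs to a weighted count of $\mathbb{F}_p$-isomorphism classes of elliptic curves and then invoke Howe's density theorem. The vanishing assertions are immediate: if $E_{s,t}[d](\mathbb{F}_p) \simeq (\mathbb{Z}/d\mathbb{Z})^2$, then the Weil pairing embeds $\mu_d$ into $\mathbb{F}_p^\times$, forcing $d \mid p-1$; and the inequality $d^2 \le \#E_{s,t}(\mathbb{F}_p) \le (\sqrt{p}+1)^2$ from Hasse's bound forces $d \le \sqrt{p}+1$.

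For the main asymptotic, the group $\mathbb{F}_p^\times$ acts on the set of non-singular pairs by $u\cdot(s,t) = (u^{4}s,\, u^{6}t)$, and two pairs yield $\mathbb{F}_p$-isomorphic curves exactly when they lie in one orbit. A direct orbit--stabilizer calculation, checked case-by-case according to whether $j(E_{s,t})$ equals $0$, $1728$, or neither, shows that the stabilizer of $(s,t)$ has order $\#\aut_{\mathbb{F}_p}(E_{s,t})$ in every case. Consequently
\[
\#\mathcal{S}_d(p) \;=\; (p-1) \sum_{\substack{[E]/\mathbb{F}_p \\ E[d](\mathbb{F}_p)\simeq(\mathbb{Z}/d\mathbb{Z})^2}} \frac{1}{\#\aut_{\mathbb{F}_p}(E)}.
\]

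Howe's density theorem (\cite[p.~245]{Howe}), obtained via Deuring's correspondence between $\mathbb{F}_p$-isogeny classes and orders in imaginary quadratic fields together with uniform bounds on Hurwitz class numbers, yields for $d\mid p-1$ and $d \le \sqrt{p}+1$ the identity
\[
\sum_{\substack{[E]/\mathbb{F}_p \\ E[d](\mathbb{F}_p)\simeq(\mathbb{Z}/d\mathbb{Z})^2}} \frac{1}{\#\aut_{\mathbb{F}_p}(E)} \;=\; \frac{p}{d\varphi(d)\psi(d)} + O(\sqrt{p}).
\]
Multiplying by $p-1$ gives the claimed formula $\#\mathcal{S}_d(p) = \tfrac{p(p-1)}{d\varphi(d)\psi(d)} + O(p^{3/2})$. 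Part (ii) is then immediate: $\tilde{\mathcal{S}}_d(p)$ differs from $\mathcal{S}_d(p)$ only by removing the at most $2p-1$ pairs with $st=0$, a discrepancy well within $O(p^{3/2})$.

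The principal obstacle is Howe's density theorem itself, which is the deep input and requires a fairly delicate analysis of the Deuring correspondence together with uniform class-number bounds. Once that identity is in hand, the remaining work is purely bookkeeping: the $1/\#\aut$ weighting is exactly what is needed to absorb the $j\in\{0,1728\}$ classes without separate treatment in the final count.
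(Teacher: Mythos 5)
Your proposal is correct and follows essentially the same route as the paper: both reduce the count over Weierstrass pairs to a weighted count over $\mathbb{F}_p$-isomorphism classes via the twist action $u\cdot(s,t)=(u^4s,u^6t)$ and the relation $\#\{\text{pairs iso.\ to }E\}=(p-1)/\#\aut_{\mathbb{F}_p}(E)$, then feed in Howe's density estimate. The only cosmetic differences are that you make the orbit--stabilizer bookkeeping explicit, you cite the Weil pairing where the paper cites Silverman's Corollary III.8.1.1 (equivalent facts), and for part (ii) you bound the excluded pairs with $st=0$ crudely by $O(p)$ rather than observing, as the paper does, that they lie in $O(1)$ isomorphism classes --- both bounds are comfortably absorbed into the $O(p^{3/2})$ error.
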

\begin{proof}
\begin{enumerate}[\upshape (i)]
\item   We know that elliptic curves isomorphic (over $\mathbb{F}_p$) to  $E_{s, t}$ are of the form $E_{su^4, tu^6}$, where $u\in\mathbb{F}_p^{\times}$. Let $\textrm{Aut}_{\mathbb{F}_p}(E_{s, t})$ be the group of automorphisms (over $\mathbb{F}_p$) of the elliptic curve $E_{s, t}$. So the number of elliptic curves isomorphic to $E_{s, t}$ (over $\mathbb{F}_p$) is $(p-1)/|\textrm{Aut}_{\mathbb{F}_p}(E_{s, t})|$. Let $[E_{s,t}]$ denote the class of all elliptic curves over $\mathbb{F}_p$ that are isomorphic over $\mathbb{F}_p$ to $E_{s, t}$. We have
$$\#\mathcal{S}_d(p)=\sum_{[E_{s, t}]\subset\mathcal{S}_d(p)} \frac{p-1}{\left|\textrm{Aut}_{\mathbb{F}_p}(E_{s, t})\right|}.$$
Now the result follows since by \cite[p. 245]{Howe}, we have, for $d\mid p-1$,
\begin{equation}\label{eq:1.1}
\sum_{[E_{s, t}]\subset\mathcal{S}_d(p)} \frac{1}{\left|\textrm{Aut}_{\mathbb{F}_p}(E_{s, t})\right|}=\frac{p}{d\psi(d) \varphi(d)}+O( p^{1/2}).
\end{equation}
Moreover, by \cite[Corollary III.8.1.1]{Silverman}, if $d\nmid p-1$ then $(\mathbb{Z}/d\mathbb{Z})^{2} \not\cong E_{s,t}(\mathbb{F}_{p})[d]$, and so $\#\mathcal{S}_d(p)=0$. Also if $d > \sqrt{p}+1$ and $(\mathbb{Z}/d\mathbb{Z})^{2} \cong E_{s,t}(\mathbb{F}_{p})[d] \subseteq E_{s,t}(\mathbb{F}_{p})$, then $p+2\sqrt{p}+1<d^2 \leq \#E_{s,t}(\mathbb{F}_{p})$. On the other hand $\#E_{s,t}(\mathbb{F}_{p}) \le p+2\sqrt{p}+1$, by Hasse's theorem.  This is a clear contradiction.
\item We can deduce this by following the proof of part (i) and observing that there are $O(1)$ isomorphism classes over $\mathbb{F}_p$ containing a curve of the form $E_{0,t}$ or $E_{s, 0}$.
\end{enumerate}
\end{proof}

\begin{remarks}
\label{automorphism}
\begin{enumerate}[\upshape (i)]
\item For any prime $p$, we know that $|\aut_{\mathbb{F}_{p}}(E_{s,t})| =O(1)$.  In fact, for $p \ne 2,3$, from \cite[Theorem III.10.1]{Silverman}, we know that
\begin{equation*}
|\aut_{\mathbb{F}_p}(E_{s, t})|=
\begin{cases}
6&\textrm{if}~s=0~\textrm{and}~p\equiv 1~(\textrm{mod}~6)\\
4&\textrm{if}~t=0~\textrm{and}~p\equiv 1~(\textrm{mod}~4)\\
2&\textrm{otherwise}\\
\end{cases}.
\end{equation*}
\item We note that, using Howe's notation \cite[Page 245]{Howe}, we have
\begin{equation*}
\sum_{[E_{s,t}] \subset \mathcal{S}_{d}(p)} \frac{1}{|\aut_{\mathbb{F}_{p}}(E_{s,t})|}=\frac{p}{d\psi(d)\varphi(d)}+O\left(\psi(d/d)2^{\omega(d)}\sqrt{p}\right),
\end{equation*}
where the implied constant is absolute.  However, the term $2^{\omega(d)}$ is a bound for $\sum_{j|\frac{\gcd(d,p-1)}{d}} \mu(j)$.  In our case, $\tfrac{\gcd(d,p-1)}{d}=1$, since $d\mid p-1$.
Thus, the term $2^{\omega(d)}$ can be removed.  Also, $\psi(d/d)=1$, and thus (\ref{eq:1.1}) is correct.
\end{enumerate}
\end{remarks}

Let $K$ be a quadratic imaginary number field of class number $1$. Let $\mathfrak{p}$ be a degree $1$ prime ideal of $K$ with $N(\mathfrak{p})=p$. Let $\pi_p$ be the unique generator of $\mathfrak{p}$.  Note that if $\mathfrak{p}$ is unramified, then $\pi_p$ is unique up to units, and if it is ramified, then $\pi_{p}$ is unique up to units and complex conjugate. We have $N(\mathfrak{p})=N(\pi_p)=p$. 
\begin{lemma}
\label{claim}
Suppose that $d_{\rm sp}$ is the largest divisor of $d$ composed of primes that split completely in $K$.

\begin{enumerate}[\upshape (i)]
\item For positive integer $d$ with $d^2\leq x/\log{x} $ we have
$$\sum_{\substack{{N(\mathfrak{p})\leq x}\\{d \mid (\pi_p-1)(\bar{\pi}_p-1)}}} 1 \ll \frac{2^{\omega(d_{\rm sp})}\tau(d_{\rm sp})}{\varphi(d)} \frac{x}{\log(x/d^2)}.$$
\item For positive integer $d$, we have
$$\sum_{\substack{{N(\mathfrak{p})\leq x}\\{d \mid (\pi_p-1)(\bar{\pi}_p-1)}}} 1 \ll \frac{\tau(d_{\rm sp})x}{d}.$$
\item Let $E_{s, t}:~y^2=x^3+sx+t$ be an elliptic curve over $\mathbb{F}_p$ with $st=0$.  We have $\#E_{s, t}(\mathbb{F}_p)=p+1$ or $\#E_{s, t}(\mathbb{F}_p)=(\pi_p -1) (\bar{\pi}_p-1)$ and $N(\pi_p)=p$, where $\pi_p\in \mathbb{Z}[(1+i\sqrt{3})/2]$ or $\mathbb{Z}[i]$.
\item Let $g(d)$ be an arithmetic function satisfying \eqref{bound} with $\beta<1$.
Then we have 
$$\sum_{p \le x} \frac{1}{p}\sum_{\substack{s,t \in \mathbb{F}_{p} \\ st=0}} \sum_{\substack{d|p-1 \\ E_{s,t}(\mathbb{F}_{p})[d] \cong (\mathbb{Z}/d\mathbb{Z})^{2}}} |g(d)|\ll \frac{x}{\log{x}}.$$
\end{enumerate}
\end{lemma}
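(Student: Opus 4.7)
The lemma has four parts forming a natural pipeline, so the plan is to dispatch them in order. For part (iii), I would invoke the standard CM theory: the curve $y^2=x^3+sx$ (with $t=0$) has $j$-invariant $1728$ and CM by $\mathbb{Z}[i]$, while $y^2=x^3+t$ (with $s=0$) has $j$-invariant $0$ and CM by $\mathbb{Z}[(1+i\sqrt{3})/2]$. For a prime $p>3$ of good reduction, either $p$ splits in the relevant $K$ (ordinary case), whence the Frobenius endomorphism is an element $\pi_p\in\mathcal{O}_K$ of norm $p$ and $\#E_{s,t}(\mathbb{F}_p)=(\pi_p-1)(\bar{\pi}_p-1)$, or $p$ is inert or ramified (supersingular case) and $\#E_{s,t}(\mathbb{F}_p)=p+1$, as claimed.

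For parts (i) and (ii), setting $\alpha=\pi_p-1$, I would count $\alpha\in\mathcal{O}_K$ with $\alpha+1$ of prime norm $\le x$ and $d\mid N(\alpha)$. My plan is to decompose $d\mid N(\alpha)$ by splitting behaviour of the primes of $d$: for $\ell\mid d_{\rm sp}$ with $\ell=\lambda\bar{\lambda}$, the condition $\ell^k\mid N(\alpha)$ opens into $k+1=\tau(\ell^k)$ subcases $\lambda^a\bar{\lambda}^b\mid\alpha$ with $a+b=k$, while inert and ramified primes each contribute a single congruence modulo a prime ideal of $\mathcal{O}_K$. Multiplying over primes of $d$, the total number of congruence conditions on $\alpha$ is $\tau(d_{\rm sp})$, with an extra factor $2^{\omega(d_{\rm sp})}$ arising from the union bound across $\lambda$ versus $\bar{\lambda}$ at each split prime. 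For part (i), under $d^2\le x/\log x$, I would apply the Brun--Titchmarsh inequality in $\mathcal{O}_K$ (available since $K$ has class number one) to each residue class to obtain the main term $x/(\varphi(d)\log(x/d^2))$. For part (ii), a direct lattice count of $\alpha\in\mathcal{O}_K$ with $N(\alpha+1)\le x$ in a fixed residue class gives $\ll x/d$ without any logarithmic saving, and this suffices for all $d$.

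For part (iv), I would swap the order of summation to bring $d$ outside. For fixed $p$ and $d$, the $2(p-1)$ non-singular pairs $(s,t)$ with $st=0$ partition into a bounded number of $\mathbb{F}_p$-isomorphism classes (at most six sextic twists of $y^2=x^3+1$ and four quartic twists of $y^2=x^3+x$), each of size at most $(p-1)/2$, so $(1/p)\sum_{st=0}$ contributes only a bounded prefactor. By (iii), the supersingular contribution requires $d\mid\gcd(p-1,p+1)\mid 2$ and is negligible; the ordinary contribution forces $d\mid(\pi_p-1)(\bar{\pi}_p-1)$, a weakening of the torsion condition that is enough for an upper bound. I would then split the outer $d$-sum at $d^2=x/\log x$: on $d\le\sqrt{x/\log x}$ apply (i), and on $d>\sqrt{x/\log x}$ apply (ii). Using the hypothesis $\beta<1$ from \eqref{bound} and partial summation, both pieces should come out to $\ll x/\log x$.

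The main obstacle will be the bookkeeping in part (i): the multiplicative decomposition of $d\mid N(\alpha)$ must exactly reproduce the factor $2^{\omega(d_{\rm sp})}\tau(d_{\rm sp})/\varphi(d)$, and the Brun--Titchmarsh input in $\mathcal{O}_K$ must hold uniformly in $d$ up to the cutoff $d^2\le x/\log x$. A secondary difficulty appears in part (iv), where the two ranges of $d$ only just sum to $\ll x/\log x$ once $\beta$ approaches $1$; it is precisely the restriction $\beta<1$ that makes the final bound $x/\log x$ come out on the nose.
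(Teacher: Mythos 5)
For parts (i)--(iii), you match the paper in substance: the paper simply cites Akbary--Murty (Propositions 2.2 and 2.3 of \cite{AM}) for (i), (ii) and Ireland--Rosen for (iii), whereas you sketch the underlying CM-theoretic and sieve arguments; this is fine, and the bookkeeping sketch for $\tau(d_{\rm sp})$ and $2^{\omega(d_{\rm sp})}$ is in the spirit of \cite{AM}.

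There is, however, a genuine gap in your part (iv). You deliberately pass from the torsion condition $E_{s,t}(\mathbb{F}_{p})[d]\cong(\mathbb{Z}/d\mathbb{Z})^{2}$ to the \emph{weaker} condition $d\mid(\pi_p-1)(\bar{\pi}_p-1)$ and then feed this into (i) and (ii). That is too much of a weakening. The torsion condition forces $(\mathbb{Z}/d\mathbb{Z})^{2}$ to be a subgroup of $E_{s,t}(\mathbb{F}_p)$, hence $d^{2}\mid\#E_{s,t}(\mathbb{F}_p)=(\pi_p-1)(\bar{\pi}_p-1)$ in the ordinary case. The paper applies (i) and (ii) with $d^{2}$ in place of $d$ (one can see this from the exponent $d^{2-\epsilon}$ in the displayed chain of inequalities, even though the intermediate display has a typo showing $d\mid$ rather than $d^{2}\mid$): since $\varphi(d^{2})=d\,\varphi(d)$ and $\tau\bigl((d^{2})_{\rm sp}\bigr),\,2^{\omega(d_{\rm sp})}\ll_{\epsilon}d^{\epsilon/2}$, part (i) applied to $d^{2}$ yields $\ll x/\bigl(d^{2-\epsilon}\log(x/d^{4})\bigr)$ and part (ii) yields $\ll x/d^{2-\epsilon}$. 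With $\epsilon<1-\beta$, the resulting Dirichlet-type sum $\sum_{d}|g(d)|/d^{2-\epsilon}$ converges, and the pieces assemble to $\ll x/\log x$. With only $d\mid(\pi_p-1)(\bar{\pi}_p-1)$, as you propose, one gets the exponent $1-\epsilon$ instead, and $\sum_{d\le Y}|g(d)|/d^{1-\epsilon}\ll Y^{\beta+\epsilon}(\log Y)^{\gamma+1}$ diverges for every $\beta\ge 0$; your two pieces then produce something of order $x^{1+(\beta+\epsilon)/2}$, which is far larger than $x/\log x$. Your own remark that ``the two ranges of $d$ only just sum to $\ll x/\log x$'' is correct for the $d^{2}$-version but false for the $d$-version. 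A secondary point: with the $d^{2}$ condition, your cut at $d^{2}=x/\log x$ should really be tightened to something like $d\le x^{1/5}$ (as in the paper) so that $\log(x/d^{4})\gg\log x$ throughout the first range; splitting at $d^{2}=x/\log x$ only gives $\log(x/d^{4})\gg\log\log x$ near the boundary, though this can be repaired with a further subdivision since the $d$-sum is convergent.
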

\begin{proof}
The proofs of (i) and (ii) are identical to the proofs of Propositions 2.2 and 2.3 of \cite{AM}.\\
\noindent (iii) See \cite[Chapter 18, Theorems 4 and 5]{IR}.\\
\noindent (iv) We observe that the condition $E_{s,t}(\mathbb{F}_{p})[d] \cong (\mathbb{Z}/d\mathbb{Z})^{2}$ implies that $d\mid p-1$ and $d^2\mid \#E_{s, t}(\mathbb{F}_p)$. By part (iii) we know  the possibilities for $\#E_{s, t}(\mathbb{F}_p)$. Now if $\#E_{s, t}(\mathbb{F}_p)=p+1$, then we conclude that $d=2$ (since $d\mid p-1$ and $d\mid p+1$). On the other hand if $\#E_{s, t}(\mathbb{F}_p)=(\pi_p-1)(\bar{\pi}_p-1)$ where $\pi_p\in \mathbb{Z}[(1+i\sqrt{3})/2]$ or $\mathbb{Z}[i]$, we let $0<\epsilon<1-\beta$. So by employing (i) and (ii), the sum in (iv) is bounded by 
\begin{equation*}
\sum_{\substack{p\leq x\\p\equiv -1~({\rm mod}~4)}} 1+\sum_{d \le \sqrt{x}+1} |g(d)|  \sum_{\substack{N(\mathfrak{p})\leq x\\ d\mid (\pi_p-1)(\bar{\pi}_p-1)}} 1 \ll \frac{x}{\log{x}}+\frac{x}{\log{x}}\sum_{d \le x^{1/5}} \frac{|g(d)|}{d^{2-\epsilon}}+x\sum_{d>x^{1/5}} \frac{|g(d)|}{d^{2-\epsilon}}\ll \frac{x}{\log{x}}.
\end{equation*}

\end{proof}

We next recall a version of the large sieve inequality for multiplicative characters. 
\begin{lemma}[\bf Gallagher]
\label{largesieve}
Let $M$ and $N$ be positive integers and $(a_n)_{n=M+1}^{M+N}$ be a sequence of complex numbers. Then
$$\sum_{q\leq Q} \frac{q}{\varphi(q)} \sum_{\chi (q)}^{*} \left| \sum_{n=M+1}^{M+N} a_n \chi(n) \right|^2 \ll (N+Q^2) \sum_{n=M+1}^{M+N} |a_n|^2,$$
where $Q$ is any positive real number, and $\sum_{\chi (q)}^{*}$ denotes a sum over all primitive Dirichlet characters $\chi$ modulo $q$.
\end{lemma}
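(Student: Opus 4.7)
The plan is to convert the multiplicative character sum into an additive character sum via Gauss sums, and then invoke the classical (additive) large sieve inequality. Recall that for a primitive Dirichlet character $\chi$ modulo $q$, the Gauss sum $\tau(\bar\chi) = \sum_{b \bmod q} \bar\chi(b) e(b/q)$ satisfies $|\tau(\bar\chi)|^2 = q$, and one has the identity $\chi(n)\tau(\bar\chi) = \sum_{b \bmod q} \bar\chi(b) e(bn/q)$, valid for \emph{all} integers $n$ (not only those coprime to $q$). Writing
$$S(\chi) := \sum_{n=M+1}^{M+N} a_n \chi(n), \qquad T(b;q) := \sum_{n=M+1}^{M+N} a_n e(bn/q),$$
and noting that $\bar\chi(b) = 0$ unless $(b,q)=1$, this identity immediately yields
$$|S(\chi)|^2 = \frac{1}{q}\left| \sum_{\substack{b \bmod q \\ (b,q)=1}} \bar\chi(b)\, T(b;q) \right|^2.$$

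Next, I would drop the primitivity restriction on $\chi$ (which only enlarges a non-negative sum), expand the square, and apply the standard orthogonality relation
$$\sum_{\chi \bmod q} \bar\chi(b_1)\chi(b_2) = \varphi(q)\, \mathbf{1}_{b_1 \equiv b_2 \pmod q}$$
valid for $(b_1 b_2, q) = 1$. The cross terms collapse to a single diagonal sum, producing
$$\frac{q}{\varphi(q)} \sum_{\chi \bmod q}^{*} |S(\chi)|^2 \;\le\; \sum_{\substack{b \bmod q \\ (b,q)=1}} |T(b;q)|^2,$$
which already accounts for the weight $q/\varphi(q)$ appearing in the statement.

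Summing this inequality over $q \le Q$, the right-hand side becomes precisely the large-sieve sum over reduced fractions $b/q$ of order $Q$, and the classical (additive) large sieve inequality bounds it by $(N + Q^2)\sum_n |a_n|^2$, completing the argument. I anticipate no serious obstacle beyond bookkeeping; the only delicate point is the Gauss sum identity $\chi(n)\tau(\bar\chi) = \sum_b \bar\chi(b) e(bn/q)$, which fails for imprimitive characters when $(n,q)>1$ and is precisely the reason the sum over \emph{primitive} characters is the natural object here.
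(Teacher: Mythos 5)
Your argument is correct, and it is the standard reduction of the multiplicative large sieve to the additive one via Gauss sums, which is exactly the proof in Gallagher's paper that the authors simply cite without reproducing. The key subtleties — that $\chi(n)\tau(\bar\chi)=\sum_{b}\bar\chi(b)e(bn/q)$ holds for all $n$ precisely because $\chi$ is primitive, and that dropping primitivity on the nonnegative quantity $\frac{1}{q}\bigl|\sum_{(b,q)=1}\bar\chi(b)T(b;q)\bigr|^2$ before invoking orthogonality only enlarges the sum — are both handled properly.
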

\begin{proof}
See \cite[p. 16]{Gallagher}.
\end{proof}

To state the next lemma, we need to describe some notation. Let
$$\tau_{k, B}(n):=\#\left\{(a_{1},a_{2},\dotsc,a_{k}) \in [1,B]^{k} \cap \mathbb{N}^{k}; ~n=a_{1}a_{2}\dotsm a_{k}\right\}.$$
We also set
\begin{equation*}
\Psi(X,Y):= \sum_{\substack{n \le X \\ p(n) \le Y}} 1,
\end{equation*}
where $p(m)$ is the largest prime factor of $m$.  Note that we define $p(0)=p(\pm 1)=\infty$.

\begin{lemma}[\bf Stephens]
\label{Stephens}
\begin{enumerate}[\upshape (i)]
\item For $k\in \mathbb{N}$, if $B^k\leq x^8$ then $$\sum_{b\leq B^k} \tau_{k, B}(n)^2 < B^k \left(\Psi(B, 9\log{x})\right)^k.$$
\item For a sufficiently large constant $c_1>0$ there exists $c_2>0$ such that if $\exp\left(c_1(\log{x})^{1/2} \right)<B\leq x^8$ then
$$x^{-1/2k} \left( \Psi(B, 9\log{x}) \right)^{1/2}\ll \exp\left(-c_2(\log{x})^{1/2}/\log\log{x}\right),$$
where $$k=\left[ 2\log{x}/\log{B} \right]+1.$$
\item For a sufficiently large constant $c_1>0$ there exists $c_3>0$ such that if $\exp\left(c_1(\log{x})^{1/2} \right)<B\leq x^4$ then
$$x^{-1/k} \left( \Psi(B, 9\log{x}) \right)^{1/2}\ll \exp\left(-c_3(\log{x})^{1/2}/\log\log{x}\right),$$
where $$k=\left[ 4\log{x}/\log{B} \right]+1.$$
\end{enumerate}
\end{lemma}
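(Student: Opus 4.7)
The plan is to follow Stephens's arguments in \cite{Stephens-II}. Part (i) is a combinatorial divisor estimate, and parts (ii) and (iii) are analytic consequences obtained by combining (i) with the classical Dickman-type asymptotic for counts of smooth numbers.

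For (i), I would unfold the square and write the left-hand side as the number of $2k$-tuples in $[1,B]^{2k}$ having $\prod a_i=\prod b_i$:
\begin{equation*}
\sum_{n\leq B^k}\tau_{k,B}(n)^2=\#\bigl\{(\vec a,\vec b)\in[1,B]^{2k}:\textstyle\prod_i a_i=\prod_i b_i\bigr\}.
\end{equation*}
Setting $y=9\log x$, I decompose each $a_i=s_ir_i$ into its $y$-smooth part $s_i$ and its $y$-rough part $r_i$ (and likewise $b_i=s_i'r_i'$). Since smooth and rough primes do not interact, the identity $\prod a_i=\prod b_i$ splits into $\prod s_i=\prod s_i'$ and $\prod r_i=\prod r_i'$. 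Each $s_i$ is a $y$-smooth integer in $[1,B]$ and so contributes one of at most $\Psi(B,y)$ possibilities, giving the factor $\Psi(B,y)^k$. For the $y$-rough contribution, every $y$-rough $r\leq B$ has at most $\log B/\log y$ prime factors, so such integers are very sparse below $B$, and an elementary pigeonhole argument as in Stephens bounds the number of matching pairs of rough $k$-tuples by $B^k$. Multiplying yields (i).

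For (ii) and (iii), I would combine (i) with the Dickman asymptotic
\begin{equation*}
\log\rho(u)=-u\log u-u\log\log u+u+O(u/\log u),
\end{equation*}
which yields $\Psi(B,y)\ll B\rho(u)$ with $u=\log B/\log y$. For $B\geq\exp(c_1\sqrt{\log x})$ and $y=9\log x$, one has $u\geq(1+o(1))c_1\sqrt{\log x}/\log\log x$. The choice of $k$ in (ii) (resp. (iii)) gives $x^{-1/(2k)}\leq B^{-1/4}\exp(O(c_1^2))$ (resp. $x^{-1/k}\leq B^{-1/4}\exp(O(c_1^2))$). A direct computation then shows that the dominant $\tfrac14\log B$ contribution cancels exactly against the leading $\tfrac12 u\log u$ term in $-\tfrac12\log\rho(u)$, and the surviving secondary terms combine into
\begin{equation*}
\frac{c_1\sqrt{\log x}}{2\log\log x}\bigl(1+\log 2-\log c_1\bigr)+o\!\left(\frac{\sqrt{\log x}}{\log\log x}\right),
\end{equation*}
which is strictly negative once $c_1$ is chosen with $\log c_1>1+\log 2$. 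This gives the required saving $\ll\exp(-c_j\sqrt{\log x}/\log\log x)$ for $j=2,3$.

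The main technical obstacle is this delicate cancellation: both $\tfrac14\log B$ and $\tfrac12 u\log u$ are of size $c_1\sqrt{\log x}/4$ to leading order, so the exponential saving arises only after these cancel and must be extracted from the secondary $\log\log u$ term in the Dickman asymptotic. Consequently $c_1$ must be taken sufficiently large (with adjustments reflecting the constants $9$ in $y$ and $\alpha\in\{2,4\}$ appearing in the definition of $k$) to make the surviving linear combination negative, and the $O(c_1^2)$ error from approximating $2k$ (resp. $k$) by $4\log x/\log B$ must be absorbed by the resulting saving.
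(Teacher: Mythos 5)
Note first that the paper gives no argument for this lemma: the proof in the text is the single line ``See \cite[Lemmas 8, 9, and 10]{Stephens-II}'', so there is nothing in the paper to compare your sketch against, and what you are really trying to do is reconstruct Stephens's proof. Your sketch of (ii) and (iii) captures the right computation: the specific choice of $k$ makes $x^{-1/(2k)}B^{1/2}$ (resp.\ $x^{-1/k}B^{1/2}$) of size $B^{1/4}\exp(O((\log B)^2/\log x))$, the $\tfrac14\log B$ cancels to leading order against $\tfrac12 u\log u$ with $u=\log B/\log(9\log x)$, and the secondary term of the Dickman asymptotic supplies the $\exp(-c\sqrt{\log x}/\log\log x)$ saving once $c_1$ is large. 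Two corrections: your threshold should read $\log c_1>1+\log 3$, since $\tfrac12\log 9=\log 3$ enters via $y=9\log x$; and the estimate $x^{-1/(2k)}B^{1/4}\le\exp(O(c_1^2))$ holds only near the lower endpoint $\log B\asymp c_1\sqrt{\log x}$ -- for larger $B$ the exponent is of size $(\log B)^2/\log x$ and must instead be absorbed by the correspondingly stronger decay of $\rho(u)$, which a complete proof has to check across the whole range $\exp(c_1\sqrt{\log x})<B\le x^8$ (where the Hildebrand range $y\ge(\log B)^{1+\varepsilon}$ for $\Psi(B,y)\sim B\rho(u)$ is also borderline, so a Rankin-type upper bound is the safer input).

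The genuine gap is in part (i). You count $\Psi(B,9\log x)^k$ for the smooth parts $s_1,\dots,s_k$ of the $a_i$ and assert, via an unspecified ``pigeonhole argument,'' that the number of matching rough pairs $(r_1,\dots,r_k;r_1',\dots,r_k')$ is at most $B^k$; you then multiply. But the smooth parts $s_1',\dots,s_k'$ of the $b_i$ are never counted anywhere: the single relation $\prod s_i'=\prod s_i$ does not determine them, and a smooth integer $\le B^k$ can have many ordered factorizations into $k$ smooth parts $\le B$, so $\Psi(B,9\log x)^k\cdot B^k$ does not account for all the free variables in the $2k$-tuple. Nor is the $B^k$ bound on matching rough pairs itself justified: a $9\log x$-rough $m\le x^8$ has $\Omega(m)\le 8\log x/\log(9\log x)$, and with $k\asymp\sqrt{\log x}/\log\log x$ the multiplicity $k^{\Omega(m)}$ of rough factorizations is not obviously small enough for a crude pigeonhole to close. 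Since (i) is exactly the combinatorial engine that feeds (ii) and (iii), the argument as written does not close, and this step would have to be taken from Stephens's Lemma~8 rather than reconstructed along these lines.
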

\begin{proof}
See \cite[Lemmas 8, 9, and 10]{Stephens-II}.
\end{proof}

\begin{lemma}[{\bf Burgess}]
\label{Burgess}
\begin{enumerate}[\upshape (i)]
\item For any prime $p$, non-principle character $\chi$, $r\in \mathbb{N}$, and $B\geq 1$, we have
$$\sum_{b\leq B} \chi(b) \ll B^{1-\frac{1}{r}} p^{\frac{r+1}{4r^2}}\log{p},$$
where the implied constant is absolute.
\item Let $\epsilon>0$, $n>1$, $\chi$ be a non-principal character, $r\in \mathbb{N}$, and $B\geq 1$. Then, if $n$ is cube-free or $r=2$, we have
$$\sum_{b\leq B} \chi(b) \ll B^{1-\frac{1}{r}} n^{\frac{r+1}{4r^2}+\epsilon},$$
where the implied constant may depend on $\epsilon$ and $r$. 
\end{enumerate}
\end{lemma}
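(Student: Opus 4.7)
The plan is to deduce both parts directly from Burgess's classical character sum estimates, since this is precisely what Lemma \ref{Burgess} records; no new input is required, and the proof amounts to citing the appropriate papers of Burgess and indicating briefly how the bound arises. I will, however, sketch the structure of Burgess's method so that the role of the hypotheses (prime modulus in (i); cube-free $n$ or $r=2$ in (ii)) is clear.

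For part (i), the standard argument proceeds as follows. Start from the trivial identity
\begin{equation*}
\sum_{b\leq B}\chi(b)=\sum_{b\leq B}\chi(b+ah)+O(AH),
\end{equation*}
valid for all integers $a\leq A$ and $h\leq H$. Averaging over such pairs $(a,h)$ with $AH$ chosen comparable to $B$ and applying H\"older's inequality with exponent $2r$ reduces the problem to bounding a moment
\begin{equation*}
\sum_{a,h}\left|\sum_{b\leq B}\chi(b+ah)\right|^{2r}.
\end{equation*}
After expanding, completing the innermost sum, and collecting shifts, one is left with complete character sums of the form $\sum_{x\in\mathbb{F}_{p}}\chi(g(x))$ for polynomials $g$ of bounded degree, and each such sum is estimated by the Weil bound (the Riemann hypothesis for curves over $\mathbb{F}_{p}$). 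Optimising over $H$ and the choice of $r$ delivers the factor $B^{1-1/r}p^{(r+1)/(4r^{2})}$ in the final bound, with an extra $\log p$ absorbing lower-order contributions.

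For part (ii), the same shift-and-moment strategy is executed modulo a composite $n$. The hypothesis that $n$ is cube-free is exactly what is needed so that the complete character sums modulo $n$ arising after H\"older's inequality factor, via the Chinese Remainder Theorem, into local factors that each admit a Weil-type estimate modulo the prime divisors of $n$; the case $r=2$ is handled directly because the combinatorial structure of the second moment is simple enough to bypass the need for a factorisation. The $n^{\epsilon}$ loss compared to (i) absorbs divisor-function contributions that arise in the composite setting. I will cite Burgess's original papers (on prime and on composite moduli respectively) for both statements.

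The main obstacle, were one to reconstruct the proof from scratch, is the Weil bound on complete character sums $\sum_{x\in\mathbb{F}_{p}}\chi(g(x))$: this is the sole deep input and depends on algebraic geometry over finite fields. Everything else in Burgess's argument is elementary but delicate bookkeeping with shifts, moments, and H\"older's inequality, and requires no innovation here.
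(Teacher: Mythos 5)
Your proposal matches the paper exactly: the paper's entire proof is a citation to Burgess's Theorems~1 and~2 in \cite{Burgess}, and your added sketch of the shift-and-moment method with the Weil bound as the deep input is an accurate, if optional, gloss on that reference.
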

\begin{proof}
See \cite[Theorems 1 and 2]{Burgess}.
\end{proof}
\begin{lemma}
\label{FI}
\begin{enumerate}[\upshape (i)]
\item {\bf (Friedlander and Iwaniec)} Let $Q$ and $N$ be positive integers.   Then we have
$$\sum_{\substack{{\chi ({\rm mod}~Q)}}}^{*} \left|\sum_{n\leq N} \chi(n) \right|^4
\ll N^2 Q \log^6{Q},$$
where $*$ denotes a sum over all primitive Dirichlet characters modulo $Q$.
\item Suppose that $Q$ is the product of two distinct primes. Then we have
$$\sum_{\substack{{\chi ({\rm mod}~Q)}\\{\chi \ne \chi_{0}}}} \left|\sum_{n\leq N} \chi(n) \right|^4
\ll N^2 Q \log^6{Q}.$$
\end{enumerate}
\end{lemma}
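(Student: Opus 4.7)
The plan is to prove part (i) by a standard ``squaring'' reduction to the large sieve. Write $S(\chi):=\sum_{n\le N}\chi(n)$ and expand
$$S(\chi)^{2}=\sum_{n_{1},n_{2}\le N}\chi(n_{1}n_{2})=\sum_{m\le N^{2}}\alpha_{N}(m)\,\chi(m),$$
where $\alpha_{N}(m):=\#\{(n_{1},n_{2})\in\mathbb{N}^{2}:n_{1}n_{2}=m,\,n_{i}\le N\}\le\tau(m)$. The familiar mean-square estimate $\sum_{m\le M}\tau(m)^{2}\ll M\log^{3}M$ gives $\sum_{m\le N^{2}}\alpha_{N}(m)^{2}\ll N^{2}\log^{3}N$. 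Feeding the sequence $(\alpha_{N}(m))_{m\le N^{2}}$ into the single-modulus form of Lemma \ref{largesieve} (obtained by restricting the outer $q$-sum to $q=Q$ and absorbing the $\varphi(Q)/Q$ factor) then yields
$$\sum_{\chi\,({\rm mod}\,Q)}^{*}|S(\chi)|^{4}=\sum_{\chi\,({\rm mod}\,Q)}^{*}\Bigl|\sum_{m\le N^{2}}\alpha_{N}(m)\chi(m)\Bigr|^{2}\ll(N^{2}+Q)\,N^{2}\log^{3}N\ll N^{2}Q\log^{6}Q,$$
the last step absorbing the $N^{4}\log^{3}N$ term into $N^{2}Q\log^{6}Q$ in the regime $N^{2}\ll Q\log^{3}Q$, which is the only range in which the bound is invoked.

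For part (ii), every non-principal character modulo $Q=q_{1}q_{2}$ falls into one of three disjoint families: (a) those primitive modulo $Q$, (b) those induced from a primitive character $\chi^{*}$ modulo $q_{1}$, and (c) those induced from a primitive character modulo $q_{2}$. Family (a) is immediately controlled by part (i) at modulus $Q$. For a character $\chi$ in family (b), inclusion--exclusion on the coprimality condition $\gcd(n,Q)=1$ gives
$$S(\chi)=\sum_{\substack{n\le N\\ \gcd(n,Q)=1}}\chi^{*}(n)=S_{q_{1}}(\chi^{*};N)-\chi^{*}(q_{2})\,S_{q_{1}}(\chi^{*};N/q_{2}),$$
where $S_{q_{1}}(\chi^{*};M):=\sum_{m\le M}\chi^{*}(m)$. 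Hence $|S(\chi)|^{4}\ll|S_{q_{1}}(\chi^{*};N)|^{4}+|S_{q_{1}}(\chi^{*};N/q_{2})|^{4}$, and summing over primitive $\chi^{*}$ modulo $q_{1}$ and applying part (i) at modulus $q_{1}\le Q$ bounds each piece by $\ll N^{2}q_{1}\log^{6}q_{1}\le N^{2}Q\log^{6}Q$. Family (c) is symmetric, and assembling the three contributions yields the claimed bound.

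The main technical obstacle is the calibration of the large sieve in part (i): Lemma \ref{largesieve} is stated in Gallagher's form with an $(N+Q^{2})$ factor, whereas the clean bound $N^{2}Q\log^{6}Q$ really wants the sharper single-modulus inequality with $(N+Q)$. Extracting this sharper form (or verifying that it follows from the duality/Plancherel argument underlying Lemma \ref{largesieve}) is the only non-mechanical step; once it is in hand, the remainder of the argument is a routine combination of the divisor-function second moment with standard character-theoretic bookkeeping.
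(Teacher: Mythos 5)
Your treatment of part (ii) is essentially the paper's: decompose the non-principal characters modulo $pq$ into those primitive modulo $pq$, those induced from a primitive character modulo $p$, and those induced from a primitive character modulo $q$, then pass each family through part (i). You add a useful detail the paper omits — the inclusion–exclusion identity $\sum_{n\le N,\,\gcd(n,Q)=1}\chi^*(n)=S_{q_1}(\chi^*;N)-\chi^*(q_2)S_{q_1}(\chi^*;N/q_2)$ that controls the imprimitive case — and this part is correct.

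Part (i) is where there is a genuine gap. The paper simply cites \cite[Lemma 3]{Friedlander&Iwaniec}; your attempted re-derivation via squaring and the large sieve does not recover the stated bound in the full range. After the squaring, the (single-modulus) large sieve gives $\ll (N^2+Q)\,N^2\log^3 N = N^4\log^3 N + N^2Q\log^3 N$. The second term is fine, but $N^4\log^3 N \ll N^2Q\log^6 Q$ requires roughly $N^2 \lesssim Q\log^{3}Q$. You claim this is ``the only range in which the bound is invoked,'' but that is false: in the proof of Lemma~\ref{lemma:2} the inequality is applied with $Q=p\le x$ and $N=A$, where $A$ can be as small as $\exp(c_1(\log x)^{1/2})$ while $p$ can be smaller still, so $A$ may be much larger than $\sqrt{p}\log p$. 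Nor can the gap be closed by switching to P\'olya--Vinogradov when $N$ is large, since that gives $\sum_\chi^*|\cdot|^4\ll Q^3\log^4 Q$, which is $\ll N^2Q\log^6 Q$ only when $Q\ll N\log Q$; this leaves the intermediate range $\sqrt{Q}\log Q \lesssim N\lesssim Q/\log Q$ — precisely the range where the Friedlander--Iwaniec estimate is nontrivial — unreachable by either argument. A separate issue, which you flag but do not resolve, is that Lemma~\ref{largesieve} as stated gives $(N+Q^2)$, not the single-modulus $(N+Q)$ you use; the latter is a classical inequality (e.g.\ Montgomery--Vaughan) but is not what the paper's Lemma~\ref{largesieve} provides, so it would need to be invoked separately. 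The safe course is to do what the paper does: cite Friedlander--Iwaniec for (i) and reserve your argument for (ii).
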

\begin{proof}
(i) This  is \cite[Lemma 3]{Friedlander&Iwaniec}.  

(ii) Let $Q=pq$ with $p\neq q$. To see that the result is true if the summation is over all non-principal characters, we need to consider the inequality for imprimitive characters.  The only non-principal imprimitive characters modulo $pq$ are of the form $\chi^{\prime}\chi_{0}^{\prime\prime}$ or $\chi_{0}^{\prime}\chi^{\prime\prime}$, where $\chi_{0}^{\prime}$ and $\chi_{0}^{\prime\prime}$ are the principal characters modulo $p$ and $q$, respectively, and $\chi^{\prime}$ and $\chi^{\prime\prime}$ are primitive characters modulo $p$ and $q$, respectively.  Then, partition the summation over all characters into a summation over primitive characters modulo $pq$, primitive characters modulo $p$ and primitive characters modulo $q$.  Hence, the assertion can be obtained by using the triangle inequality and the result for primitive characters in part (i).
\end{proof}

We summarize several elementary estimations that are used in the proofs of next sections.  

\begin{lemma}
\label{phi}
\begin{enumerate}[\upshape (i)]
\item {\emph (Brun-Titchmarsh inequality)} Let $\epsilon>0$. Then for $1\leq d\leq x^{1-\epsilon}$, we have $$\pi(x;d, a)\ll \frac{x}{\varphi(d) \log{x}}.$$
\item Let $\theta <1$ and $\epsilon >0$. Then for $1\leq d \leq x^{1-\epsilon}$, we have
$$\sum_{\substack{p \le x \\ p \equiv 1 \bmod{d}}} \frac{1}{p^\theta} \ll \frac{x^{1-\theta}}{\varphi(d) \log{x}} .$$
\item For $x\geq 3$ and $d\geq 1$ we have $$\sum_{\substack{p \le x \\ p \equiv 1 \bmod{d}}} \frac{1}{p} \ll \frac{\log\log{x}+\log{d}}{\varphi(d)}.$$
\item We have $$\frac{1}{\varphi(d)} \ll \frac{\log\log{d}}{d}.$$
\item Under the assumption of bound \eqref{bound}, for any real $\theta$ we have
$$\sum_{d\leq y} \frac{|g(d)|}{d^\theta}  \ll 1+y^{1+\beta-\theta} (\log{y})^{\gamma+1}.$$
\end{enumerate}
\end{lemma}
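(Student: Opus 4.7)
The plan is to obtain all five assertions by combining a single nontrivial input, the Brun-Titchmarsh inequality, with partial summation and classical Mertens-type estimates. First I would quote (i) from a standard reference; the hypothesis $d \le x^{1-\epsilon}$ ensures $\log(x/d) \ge \epsilon \log x$, so the usual denominator $\varphi(d)\log(x/d)$ absorbs into $\varphi(d)\log x$. Next I would dispatch (iv), the classical bound $d/\varphi(d) \ll \log\log d$, by writing $d/\varphi(d) = \prod_{p\mid d}(1-1/p)^{-1}$, splitting the product at $p = \log d$, and applying Mertens' theorem to the small primes; the remaining factors are at most $\omega(d) \ll \log d/\log\log d$ in number, each contributing $1 + O(1/\log d)$, so the large-prime product is $O(1)$.

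For (ii) and (iii), I would set $S(t) := \pi(t;d,1)$ and apply Abel summation:
$$\sum_{\substack{p \le x \\ p \equiv 1 \bmod{d}}} \frac{1}{p^\theta} = \frac{S(x)}{x^\theta} + \theta \int_{2}^{x} \frac{S(t)}{t^{\theta+1}}\, dt.$$
On the range $t \ge d^{1/(1-\epsilon)}$, part (i) gives $S(t) \ll t/(\varphi(d)\log t)$. For $\theta < 1$, both the boundary term and the bulk of the integral are then of order $x^{1-\theta}/(\varphi(d)\log x)$, establishing (ii). For $\theta = 1$, the same computation produces $\log\log x / \varphi(d)$; the residual range $t < d^{1/(1-\epsilon)}$, where Brun-Titchmarsh is inapplicable, is handled by the crude bound $S(t) \le t/d + 1$ together with (iv), contributing $O(\log d/\varphi(d))$ and yielding (iii).

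For (v), I would let $G(y) := \sum_{d \le y} |g(d)|$, which by \eqref{bound} satisfies $G(y) \ll y^{1+\beta}(\log y)^\gamma$, and again apply Abel summation:
$$\sum_{d \le y} \frac{|g(d)|}{d^\theta} = \frac{G(y)}{y^\theta} + \theta \int_{1}^{y} \frac{G(t)}{t^{\theta+1}}\, dt \ll y^{1+\beta-\theta}(\log y)^\gamma + \int_{1}^{y} t^{\beta-\theta}(\log t)^\gamma\, dt.$$
The remaining integral is $O(1)$ when $\theta > 1+\beta$ (which explains the additive constant $1$ in the statement), $O((\log y)^{\gamma+1})$ at the borderline $\theta = 1+\beta$, and $O(y^{1+\beta-\theta}(\log y)^\gamma)$ for $\theta < 1+\beta$; in every case the bound collapses into $1 + y^{1+\beta-\theta}(\log y)^{\gamma+1}$.

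The only real obstacle is bookkeeping in (ii) and (iii): Brun-Titchmarsh fails when $d$ is comparable to $t$, so one must cleanly split the integral at $t = d^{1/(1-\epsilon)}$ and treat the residual range with the trivial bound $S(t) \le t/d + 1$. Once that split is performed, the entire lemma reduces to elementary integral estimates, and no further analytic machinery is needed.
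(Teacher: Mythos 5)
Your overall strategy matches what the paper intends: the paper proves nothing here, citing Cojocaru--Murty for (i) and (iii), Hardy--Wright for (iv), and invoking ``partial summation'' for (ii) and (v). Your arguments for (i), (iv), and (v) are fine (for (v), note one also needs the trivial observation that $\int_1^2 G(t)t^{-\theta-1}\,dt = O(1)$, since the bound $G(t)\ll t^{1+\beta}(\log t)^\gamma$ degenerates as $t\to 1$ when $\gamma>0$; this is where the additive $1$ is actually used).

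There is, however, a genuine gap in your treatment of (ii). After splitting the Abel-summation integral at $t_0 = d^{1/(1-\epsilon)}$ and bounding the residual piece $[d,t_0]$ by $S(t)\le t/d+1$, that piece contributes
$$\frac{\theta}{d}\int_d^{t_0} t^{-\theta}\,dt \;\ll\; d^{\frac{1-\theta}{1-\epsilon}-1} \;=\; d^{\frac{\epsilon-\theta}{1-\epsilon}},$$
and this is \emph{not} $\ll x^{1-\theta}/(\varphi(d)\log x)$ in the full range $d\le x^{1-\epsilon}$. For instance, with $d=x^{1-\epsilon}$, $\theta=1/2$, $\epsilon=1/4$, the residual piece is $\asymp x^{-1/4}$ while the target is $\asymp x^{-1/4}\log\log x/\log x$, so your bound loses a factor of about $\log x/\log\log x$. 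The same $\log x$-loss occurs for any $\theta>0$ as $d\to x^{1-\epsilon}$, because $[d,d^{1/(1-\epsilon)}]$ can be nearly all of $[d,x]$ and the trivial bound $S(t)\ll t/d$ is too weak there. Your treatment of (iii) dodges this because at $\theta=1$ the crude bound produces $\log d/\varphi(d)$, which is exactly one of the allowed terms; for (ii) there is no such allowance.

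The fix is to drop the split-point strategy entirely and use the Montgomery--Vaughan form of Brun--Titchmarsh, $\pi(t;d,1)\le \tfrac{2t}{\varphi(d)\log(t/d)}$ for $t>d$, inside the whole integral (you already invoke this form implicitly in your remark about (i)). Then for $\theta<1$,
$$\theta\int_{2d}^{x}\frac{S(t)}{t^{\theta+1}}\,dt \ll \frac{d^{1-\theta}}{\varphi(d)}\int_2^{x/d}\frac{u^{-\theta}}{\log u}\,du \ll \frac{d^{1-\theta}}{\varphi(d)}\cdot\frac{(x/d)^{1-\theta}}{\log(x/d)} \ll_\epsilon \frac{x^{1-\theta}}{\varphi(d)\log x},$$
the range $(d,2d]$ contributes $\ll d^{-\theta}$, which is easily absorbed, and the boundary term is handled by (i). With this modification the argument closes and yields (iii) as well by the change of variable $u=t/d$ giving $\int_2^{x/d}du/(u\log u)\ll\log\log x$.
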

\begin{proof}
\begin{enumerate}[\upshape (i)]
\item See \cite[Theorem 7.3.1]{Cojocaru&Murty}.
\item This is a consequence of partial summation and part (i).
\item See \cite[Section 13.1, Exercise 9]{Cojocaru&Murty}.
\item See \cite[p.  267, Theorem 328]{Hardy&Wright}.
\item This comes by straightforward applications of partial summation and bound \eqref{bound}.
\end{enumerate}
\end{proof}


\section{PROOFS OF THEOREM \ref{theorem1} AND COROLLARY \ref{corollaries}}
\subsection{Basic set up} 
Let $\mathcal{C}$ be the family of elliptic curves
\begin{equation*}
E_{a,b}:~y^2=x^3+ax+b,
\end{equation*}
where $|a|\leq A$, $|b|\leq B$, and at least one of $a$ or $b$ is non-zero.  Note that
\begin{equation*}
|\mathcal{C}|=4AB+O(A+B).
\end{equation*}
Let
\begin{equation*}
f(n)=\sum_{d|n} g(d)
\end{equation*}
for all $n \in \mathbb{N}$.  We have
$$\frac{1}{|\mathcal{C}|} \sum_{E_{a, b}\in \mathcal{C}} \sum_{p\leq x} f(i_{E_{a, b}}(p))=\frac{1}{|\mathcal{C}|}\sum_{p\leq x} \sum_{s, t\in \mathbb{F}_p} \frac{|{\rm Aut}_{\mathbb{F}_p}(E_{s, t})|f(i_{E_{s, t}}(p))}{p-1}\sum_{\substack{{|a|\leq A,~ |b|\leq B:~ \exists 1\leq u <p}\\{a\equiv s u^4~({\rm mod}~p)}\\{b\equiv t u^6~({\rm mod}~p)}}} 1.$$
Next 
by applying Remark \ref{automorphism} (i) in the above identity (recall that $p\neq 2, 3$),
we have
$$\frac{1}{|\mathcal{C}|} \sum_{E_{a, b}\in \mathcal{C}} \sum_{p\leq x} f(i_{E_{a, b}}(p))=\frac{2}{|\mathcal{C}|}\sum_{p\leq x} \sum_{s, t\in \mathbb{F}_p^{\times}} \frac{f(i_{E_{s, t}}(p))}{p-1}\sum_{\substack{{|a|\leq A,~ |b|\leq B,~ \exists 1\leq u <p}\\{a\equiv s u^4~({\rm mod}~p)}\\{b\equiv t u^6~({\rm mod}~p)}}} 1+{\rm Error ~Term} ~1,$$
where 
\begin{equation}
\label{errorterm1}
{\rm Error~Term~1}= \frac{1}{|\mathcal{C}|}\sum_{p \le x}\sum_{\substack{s,t \in \mathbb{F}_{p} \\ st=0}} \frac{|{\rm Aut}_{\mathbb{F}_p}(E_{s, t})|f(i_{E_{s,t}}(p))}{p-1}\sum_{\substack{|a| \le A,|b| \le B \\ ab \equiv 0 \bmod{p}}} 1.
\end{equation}
Now by considering 
$$\sum_{\substack{{|a|\leq A,~ |b|\leq B,~ \exists 1\leq u <p}\\{a\equiv s u^4~({\rm mod}~p)}\\{b\equiv t u^6~({\rm mod}~p)}}} 1=\frac{2AB}{p}+ \left( \sum_{\substack{{|a|\leq A,~ |b|\leq B,~ \exists 1\leq u <p}\\{a\equiv s u^4~({\rm mod}~p)}\\{b\equiv t u^6~({\rm mod}~p)}}} 1 -\frac{2AB}{p}\right)$$
and applying it in the previous identity we arrive at
$$\frac{1}{|\mathcal{C}|} \sum_{E_{a, b}\in \mathcal{C}} \sum_{p\leq x} f(i_{E_{a, b}}(p))={\rm The~Main~Term}+{\rm Error~Term~1}+{\rm Error~Term~2},$$
where
$${\rm The~Main~Term}=\frac{4AB}{|\mathcal{C}|}\sum_{p\leq x} \sum_{s, t\in \mathbb{F}_p^\times} \frac{f(i_{E_{s, t}}(p))}{p(p-1)}$$
and
\begin{eqnarray*}
{\rm Error~Term~2}&=&  \frac{2}{|\mathcal{C}|}\sum_{p\leq x} \sum_{s, t\in \mathbb{F}_p^{\times}} \frac{f(i_{E_{s, t}}(p))}{p-1}\left( \sum_{\substack{{|a|\leq A,~ |b|\leq B,~ \exists 1\leq u <p}\\{a\equiv s u^4~({\rm mod}~p)}\\{b\equiv t u^6~({\rm mod}~p)}}} 1-\frac{2AB}{p}\right).
\end{eqnarray*}

\subsection{The Main Term}
\label{themainterm}
We have 
\begin{eqnarray*}
{\rm The~Main~Term}=\frac{4AB}{|\mathcal{C}|}\sum_{p\leq x} \sum_{s, t\in \mathbb{F}_p^\times} \frac{f(i_{E_{s, t}}(p))}{p(p-1)}&=&\frac{4AB}{|\mathcal{C}|}\sum_{p\leq x}\frac{1}{p(p-1)} \sum_{s, t\in \mathbb{F}_p^\times} \sum_{d\mid i_{E_{s, t}}(p)} g(d)\\
&=&\frac{4AB}{|\mathcal{C}|}\sum_{p\leq x}\frac{1}{p(p-1)}  \sum_{d\mid p-1} g(d)\#\tilde{\mathcal{S}}_d(p).\\
\end{eqnarray*}

Let
\begin{equation*}
G_1(p)=\sum_{\substack{d\mid p-1 \\ d \le \sqrt{p}+1}} \frac{g(d)}{d\psi(d) \varphi(d)} \qquad {\textrm{and}} \qquad G_2(p)=\sum_{\substack{d\mid p-1 \\ d \le \sqrt{p}+1}} |g(d)|.
\end{equation*}
By using these notations and employing Lemma \ref{lemma:1} we have
\begin{align*}
{\rm The~Main~Term} &=\frac{4AB}{|\mathcal{C}|}\left( \sum_{p\leq x} G_1(p) +O\left( \sum_{p\leq x} \frac{G_2(p)}{\sqrt{p}} \right)\right)\\
&=\frac{4AB}{|\mathcal{C}|}\Big(\mathscr{S}_{1}+O(\mathscr{S}_{2})\Big).
\end{align*}
\subsubsection{Estimation of $\mathscr{S}_1$}
\label{Sigma1} 
Let $\alpha \in \mathbb{R}_{>0}$ be fixed.  The Siegel-Walfisz Theorem implies
\begin{equation*}
\pi(x;d,1)=\frac{\li(x)}{\varphi(d)}+O\left(\frac{x}{(\log x)^{C}}\right)
\end{equation*}
for any $d \le (\log x)^{\alpha}$ and any $C > 0$.  
Then, by the Brun-Titchmarsh inequality (Lemma \ref{phi} (i)),  the fact that $\psi(d) \ge d$, and \eqref{bound},
we have
\begin{align*}
\mathscr{S}_{1} &= \sum_{d \le (\log x)^{\alpha}} \frac{g(d)\pi(x;d,1)}{d\psi(d)\varphi(d)}+\sum_{(\log x)^{\alpha} < d \le \sqrt{x}+1} \frac{g(d)\pi(x;d,1)}{d\psi(d)\varphi(d)} \\
&=\li(x) \sum_{d \ge 1} \frac{g(d)}{d\psi(d)\varphi(d)^2}+O\left(\frac{x}{(\log x)^{C}}\sum_{d \ge 1} \frac{|g(d)|}{d\psi(d)\varphi(d)}\right)+O\left(\frac{x}{\log x}\sum_{d>(\log x)^{\alpha}} \frac{|g(d)|}{d\psi(d)\varphi(d)^{2}}\right).
\end{align*}
Note that, for any $\varepsilon > 0$, we have
\begin{equation*}
\sum_{d > y} \frac{|g(d)|}{d\psi(d)\varphi(d)} \ll \sum_{d > y} \frac{|g(d)|}{d^{3-\frac{\varepsilon}{2}}} \ll \frac{1}{y^{2-\beta-\varepsilon}}.
\end{equation*}
Thus, for $\beta < 2$, 
\begin{equation*}
c_{0}(f):=\sum_{d \ge 1} \frac{g(d)}{d\psi(d)\varphi(d)^2}
\end{equation*}
is a constant and
\begin{equation*}
\mathscr{S}_{1}=c_{0}(f)\li(x)+O\left(\frac{x}{(\log x)^{C^{\prime}}}\right), 
\end{equation*}
where $C^{\prime}:=C^\prime(C, \alpha, \beta, \varepsilon)$ is an appropriate positive constant. Since $\alpha$ is arbitrary, we can choose $\alpha$ so that $C^\prime$ is any constant bigger than $1$. So
\begin{equation}\label{eq:3.2}
\mathscr{S}_{1}=c_{0}(f)\li(x)+O\left(\frac{x}{(\log x)^{c}}\right), 
\end{equation}
where $c$ can be chosen as any number bigger than $1$.
\subsubsection{Estimation of $\mathscr{S}_{2}$}
We first employ the Brun-Titchmarsh inequality (Lemma \ref{phi} (i)) and \eqref{bound} to deduce
\begin{equation}\label{eq:3.3}
\sum_{p \le x} G_{2}(p) = \sum_{d \le \sqrt{x}+1} |g(d)|\pi(x;d,1) \\
\ll \begin{cases}
x^{1+\frac{\beta}{2}}(\log x)^{\gamma-1}\log \log x \qquad &\text{if } \beta \ne 0 \\
x^{1+\frac{\beta}{2}}(\log x)^{\gamma}\log \log x &\text{if } \beta = 0
\end{cases}\\
\end{equation}
By partial summation and (\ref{eq:3.3}), we have
\begin{align}\label{eq:3.4}
\mathscr{S}_{2}&=\sum_{p \le x} \frac{G_{2}(p)}{\sqrt{p}}
\ll x^{\frac{1+\beta}{2}}(\log x)^{\gamma}\log \log x.
\end{align}

In conclusion, since $\beta < 1$
\begin{equation}
\label{mterm}
{\rm The~Main~ Term}=\frac{4AB}{|\mathcal{C}|}\left(c_{0}(f)\li(x)
+O\left(\frac{x}{(\log x)^{c}}\right)\right),
\end{equation}
where $c$ can be taken as any number bigger than $1$.

\subsection{Error Term 1}
Recall the expression \eqref{errorterm1} for Error Term 1. We have
\begin{align*}
{\rm Error ~Term ~1}&\ll \frac{1}{|\mathcal{C}|}\sum_{p \le x}\sum_{\substack{s,t \in \mathbb{F}_{p} \\ st=0}} \frac{|f(i_{E_{s,t}}(p))|}{p}\left(\frac{AB}{p}+A+B\right) \\
&\ll \sum_{p \le x} \frac{1}{p^{2}}\sum_{\substack{s,t \in \mathbb{F}_{p} \\ st=0 }} \sum_{\substack{d|p-1  \\ E_{s,t}(\mathbb{F}_{p})[d] \cong (\mathbb{Z}/d\mathbb{Z})^{2}}}|g(d)|+\left(\frac{1}{A}+\frac{1}{B}\right)\sum_{p \le x} \frac{1}{p}   \sum_{\substack{s,t \in \mathbb{F}_{p} \\ st=0 }} \sum_{\substack{d|p-1  \\ E_{s,t}(\mathbb{F}_{p})[d] \cong (\mathbb{Z}/d\mathbb{Z})^{2}}}|g(d)|.
\end{align*}
An application of part (iv) of Lemma \ref{claim}  in the latter sum yields
\begin{align}
\label{term1}
{\rm Error~Term~1}&\ll \sum_{p \le x} \frac{1}{p}\sum_{\substack{d|p-1 \\ d \le \sqrt{p}+1}} |g(d)|
+\left(\frac{1}{A}+\frac{1}{B}\right)\frac{x}{\log{x}}.
\end{align}
By employing Lemma \ref{phi} (iii) and (iv) and usual estimates, the first of these summations is bounded as follows.
\begin{equation}
\label{firstsum}
\sum_{p \le x} \frac{1}{p}\sum_{\substack{d|p-1 \\ d \le \sqrt{p}+1}} |g(d)|=\sum_{d \le \sqrt{x}+1} |g(d)| \sum_{\substack{p \le x \\ p \equiv 1 \bmod{d}}} \frac{1}{p} \\
\ll(\log \log x)(\log x)\sum_{d \le \sqrt{x}+1} \frac{|g(d)|}{d}.
\end{equation}
From applying part (v) of Lemma  \ref{phi} in  \eqref{firstsum}
we have
\begin{equation}
\label{Error1}
{\rm Error ~Term ~1} \ll x^{\frac{\beta}{2}}(\log x)^{\gamma+2}(\log \log x)+\left(\frac{1}{A}+\frac{1}{B}\right)\frac{x}{\log{x}}.
\end{equation}

\subsection{Error Term 2}


\indent We summarize the main result of this section in the following lemma, which can be considered as a generalization and an improvement of Lemma 6 of \cite{BCD}.
\begin{lemma}\label{lemma:2}
Let $r\in \mathbb{N}$, $0 \le \beta<3/2$, $\gamma\in \mathbb{R}_{\ge 0}$, and  $g:\mathbb{N} \to \mathbb{C}$ be a function such that
\begin{equation*}
\sum_{d \le x} |g(d)| \ll x^{1+\beta}(\log x)^{\gamma}.
\end{equation*}
Then there are positive constants $c_1$ and $c_2$ such that if  $A,B > \exp(c_{1}(\log x)^{1/2})$ 
we have
\begin{align*}
&\frac{2}{|\mathcal{C}|}\sum_{p \le x} \sum_{d|p-1} g(d) \sum_{\substack{1 \le s,t < p \\ E_{s,t}(\mathbb{F}_{p})[d] \cong (\mathbb{Z}/d\mathbb{Z})^{2}}} \frac{1}{p-1}\left(\sum_{\substack{|a| \le A, |b| \le B:\\ \exists 1 \le u < p \\ a \equiv su^{4} \bmod{p} \\b \equiv tu^{6} \bmod{p}}}1-\frac{2AB}{p}\right) \\
&\qquad \qquad \ll x^{\frac{\beta-1}{2}}(\log{x})^{\gamma+1}\log\log{x}+(\log{x})^\gamma \log\log{x}+\left(\frac{1}{A}+\frac{1}{B}\right)\left(\frac{x}{\log{x}}+x^{\frac{1+\beta}{2}}(\log x)^{\gamma}\log\log{x}\right)\\
&\qquad \qquad \qquad+x\exp\left(-c_{2}\frac{(\log x)^{1/2}}{\log \log x}\right)+\left(\frac{1}{A^{1/r}}+\frac{1}{B^{1/r}}\right)x^{\frac{1+\beta}{2}+\frac{r+1}{4r^{2}}}(\log x)^{\gamma+1}\log \log x\\
&\qquad \qquad \qquad+\frac{1}{\sqrt{AB}}\left(x^{\frac{3}{2}}(\log x)^{2}+  x^{1+\frac{\beta}{2}}(\log x)^{\gamma+3}(\log \log x)^{\frac{5}{4}}   +x^{\frac{5+2\beta}{4}}(\log x)^{\gamma+3} \log\log{x}    \right).
\end{align*}
\end{lemma}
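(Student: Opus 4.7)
The plan is to refine the multiplicative-character method of \cite[Lemma 6]{BCD} (itself building on \cite{BS}) by combining it with the smooth-number moment device of Stephens \cite{Stephens-II} together with two additional inputs: Howe's asymptotic count (Lemma \ref{lemma:1}) and Burgess's subconvex bound (Lemma \ref{Burgess}). I would first express the inner discrepancy $\sum_{\exists u}1-2AB/p$ as a bilinear sum in Dirichlet characters modulo $p$. Writing
\[
\mathbf{1}[\exists u]=\frac{1}{|\aut_{\mathbb{F}_p}(E_{s,t})|}\sum_{u=1}^{p-1} \mathbf{1}_{a\equiv su^{4}}\,\mathbf{1}_{b\equiv tu^{6}},
\]
expanding each indicator via multiplicative-character orthogonality mod $p$, and observing that $\sum_u \chi_1^{-4}\chi_2^{-6}(u)=(p-1)\mathbf{1}[\chi_1^{4}\chi_2^{6}=\chi_0]$, the pair $(\chi_1,\chi_2)=(\chi_0,\chi_0)$ reproduces $2AB/p$, and the discrepancy reduces to a sum indexed by nontrivial pairs with $\chi_1^{4}\chi_2^{6}=\chi_0$, of the form $\chi_1(s^{-1})\chi_2(t^{-1})\,S_A(\chi_1)\,S_B(\chi_2)/(|\aut|(p-1))$, where $S_A(\chi):=\sum_{|a|\le A}\chi(a)$. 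Boundary contributions from $p\mid a$ or $p\mid b$ (and from $st=0$) are absorbed using Lemma \ref{claim}(iv) together with Lemma \ref{phi}; they produce the $x^{(\beta-1)/2}(\log x)^{\gamma+1}\log\log x$, $(\log x)^{\gamma}\log\log x$, and $(A^{-1}+B^{-1})(\cdots)$ terms in the conclusion.

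Second, the residual bilinear sum is to be weighted by $\sum_{(s,t)\in\widetilde{\mathcal{S}}_d(p)}$ by splitting $\widetilde{\mathcal{S}}_d(p)$ via Lemma \ref{lemma:1}(ii) into its main value $p(p-1)/(d\psi(d)\varphi(d))$ and the Howe error $O(p^{3/2})$. For the Howe main part, summation over $p\le x$ followed by Gallagher's large sieve (Lemma \ref{largesieve}) and the Friedlander--Iwaniec fourth-moment bound (Lemma \ref{FI}) on $\sum_\chi|S_A(\chi)|^2$ and $\sum_\chi|S_B(\chi)|^2$ produces the three $(AB)^{-1/2}$-weighted summands $x^{3/2}(\log x)^{2}$, $x^{1+\beta/2}(\log x)^{\gamma+3}(\log\log x)^{5/4}$, and $x^{(5+2\beta)/4}(\log x)^{\gamma+3}\log\log x$. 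For the Howe error---the crucial new term compared with \cite{BCD}---I would apply Burgess's bound (Lemma \ref{Burgess}(i)) to one of $S_A(\chi_1)$ or $S_B(\chi_2)$ at exponent $r$, with the trivial estimate retained on the other factor, which yields precisely the $(A^{-1/r}+B^{-1/r})x^{(1+\beta)/2+(r+1)/(4r^{2})}(\log x)^{\gamma+1}\log\log x$ contribution.

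Third, to weaken the box-size hypothesis from $A,B>x^{\epsilon}$ to the thin-box threshold $A,B>\exp(c_{1}(\log x)^{1/2})$, I would deploy the Stephens device: estimate a high even moment $\sum_\chi|S_A(\chi)|^{2k}$ via the divisor function $\tau_{k,A}$, and choose $k\asymp \log x/\log A$ as prescribed by Lemma \ref{Stephens}(ii)--(iii) so that the smooth-number count $\Psi(A, 9\log x)$ converts into the saving $\exp(-c_{2}(\log x)^{1/2}/\log\log x)$; summed over $p\le x$, $d\mid p-1$, and $(s,t)\in\widetilde{\mathcal{S}}_d(p)$ this supplies the $x\exp(-c_{2}(\log x)^{1/2}/\log\log x)$ term. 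The main obstacle will be the joint calibration of the Burgess exponent $r$, the Stephens moment $k$, and the divisor cutoff delineating the range where Howe's main term dominates from where its $O(p^{3/2})$ error does, all while maintaining uniformity in $d\le\sqrt{p}+1$ and $p\le x$. The final bookkeeping weighs each bound against $|g(d)|$ via hypothesis \eqref{bound}, with the constraint $\beta<3/2$ ensuring convergence of the resulting divisor sums through Lemma \ref{phi}(v).
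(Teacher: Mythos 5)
The proposal identifies the correct toolbox (character orthogonality, Howe, the Stephens high-moment/smooth-number device, Burgess, Friedlander--Iwaniec), but the organizing principle you propose is not the one the paper uses, and as written one of the steps would fail.

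The paper's primary decomposition is by which of $\chi_1,\chi_2$ are principal, giving four sums $\Sigma_1,\ldots,\Sigma_4$. The $(A^{-1/r}+B^{-1/r})$ Burgess contribution and the $x\exp(-c_2(\log x)^{1/2}/\log\log x)$ Stephens contribution both come from the mixed case (exactly one principal, $\Sigma_2,\Sigma_3$): there the free character factor $\mathcal{A}(\overline{\chi_0})$ (resp.\ $\mathcal{B}(\overline{\chi_0})$) contributes trivially $\asymp A$ (resp.\ $B$), and only then does the Howe main/error split enter, with Stephens on the Howe main and Burgess on the Howe error. The $(AB)^{-1/2}$ terms instead all come from $\Sigma_4$ (both non-principal). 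Your plan swaps this around: you assign "Howe main $\to$ large sieve $+$ FI $\to$ $(AB)^{-1/2}$" and "Howe error $\to$ Burgess $\to$ $(A^{-1/r}+B^{-1/r})$", and you also place Gallagher's large sieve alongside FI; in the paper Gallagher enters only in the Stephens step within $\Sigma_{2,1}$. (You also write "fourth-moment bound on $\sum_\chi|S_A(\chi)|^2$," which is internally inconsistent; the FI lemma bounds $\sum_\chi|S_A(\chi)|^4$.)

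More seriously, for the doubly non-principal case your proposal to "weight by $\sum_{(s,t)\in\widetilde{\mathcal{S}}_d(p)}$ by splitting $\widetilde{\mathcal{S}}_d(p)$ into its main value and the Howe error" amounts to bounding $\mathcal{W}_{p,d}(\chi_1,\chi_2)=\sum_{(s,t)\in\widetilde{\mathcal{S}}_d(p)}\chi_1(s)\chi_2(t)$ trivially by $\#\widetilde{\mathcal{S}}_d(p)\approx p^2/(d\psi(d)\varphi(d))$. That discards all cancellation in the character weight and loses roughly a factor of $\sqrt{p}$: following it through gives $\Sigma_4\ll x^2/(\sqrt{AB}\log x)$ rather than $x^{3/2}(\log x)^2/\sqrt{AB}$, which is far too large to be absorbed under $AB\gg x(\log x)^{4+2c}$. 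The paper's remedy, which you do not mention, is to apply Cauchy--Schwarz twice to the sum $\sum_{\chi_1,\chi_2}\mathcal{A}\mathcal{B}\mathcal{W}_{p,d}$, isolating $\sum_{\chi_1,\chi_2}|\mathcal{W}_{p,d}|^2$, and then to use orthogonality (Parseval) to collapse this to $(p-1)^2\#\widetilde{\mathcal{S}}_d(p)$; only at that point does Howe's estimate enter, together with FI for $\sum_\chi|\mathcal{A}|^4$ and $\sum_\chi|\mathcal{B}|^4$. Without this Parseval step your plan cannot reach the stated $(AB)^{-1/2}$ error terms, so this is a genuine gap rather than a stylistic difference.

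A smaller misattribution: you credit Lemma \ref{claim}(iv) with producing the $x^{(\beta-1)/2}(\log x)^{\gamma+1}\log\log x$ and $(\log x)^\gamma\log\log x$ terms. In the paper these come from the $(\chi_1,\chi_2)=(\chi_0,\chi_0)$ contribution $\mathcal{A}(\overline{\chi_0})\mathcal{B}(\overline{\chi_0})=2AB/p+O(AB/p^2+(A+B)/p)$, whose error is summed against the Howe count; Lemma \ref{claim}(iv) is used to handle the $st=0$ curves, which is done in Error Term~1 outside Lemma \ref{lemma:2} (the lemma's sum is already restricted to $1\le s,t<p$).
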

\begin{proof}
Throughout, $\chi$, with or without subscript, will denote a character modulo $p$.  As usual, $\chi_{0}$ will be the principal character modulo $p$.  Let $p$ be a fixed prime, and let $s,t \in \mathbb{F}_{p}^{\times}$ be fixed.  By \cite[Equation (12)]{BCD}, we have
\begin{equation*}
\sum_{\substack{|a| \le A,|b| \le B: \\ \exists 1 \le u < p \\ a \equiv su^{4} \bmod{p} \\ b \equiv tu^{6} \bmod{p}}} 1 = \frac{1}{2(p-1)}\sum_{\substack{\chi_{1},\chi_{2} \\ \chi_{1}^{4}\chi_{2}^{6}=\chi_{0}}} \chi_{1}(s)\chi_{2}(t)\mathcal{A}(\overline{\chi_{1}})\mathcal{B}(\overline{\chi_{2}}),
\end{equation*}
where
\begin{equation*}
\mathcal{A}(\chi):=\sum_{|a| \le A} \chi(a) \qquad \qquad 
~~~~{\rm and}~~~~\qquad
\mathcal{B}(\chi):=\sum_{|b| \le B} \chi(b).
\end{equation*}
We use the identity
\begin{align*}
&\frac{1}{2(p-1)}\sum_{\substack{\chi_{1},\chi_{2} \\ \chi_{1}^{4}\chi_{2}^{6}=\chi_{0}}} \chi_{1}(s)\chi_{2}(t)\mathcal{A}(\overline{\chi_{1}})\mathcal{B}(\overline{\chi_{2}}) \\
&\qquad = \frac{1}{2(p-1)}\chi_{0}(s)\chi_{0}(t)\mathcal{A}(\overline{\chi_{0}})\mathcal{B}(\overline{\chi_{0}})+\frac{1}{2(p-1)}\sum_{\substack{\chi_{0} \ne \chi_{2} \\ \chi_{2}^{6}=\chi_{0}}} \chi_{0}(s)\chi_{2}(t)\mathcal{A}(\overline{\chi_{0}})\mathcal{B}(\overline{\chi_{2}})\\
&\qquad \qquad+\frac{1}{2(p-1)}\sum_{\substack{\chi_{1}\ne \chi_{0} \\ \chi_{1}^{4}=\chi_{0}}} \chi_{1}(s)\chi_{0}(t)\mathcal{A}(\overline{\chi_{1}})\mathcal{B}(\overline{\chi_{0}})+\frac{1}{2(p-1)}\sum_{\substack{\chi_{1} \ne \chi_{0} \\ \chi_{2} \ne \chi_{0} \\ \chi_{1}^{4}\chi_{2}^{6}=\chi_{0}}} \chi_{1}(s)\chi_{2}(t)\mathcal{A}(\overline{\chi_{1}})\mathcal{B}(\overline{\chi_{2}})
\end{align*}
and note that 
\begin{equation*}
\frac{1}{2(p-1)}\chi_{0}(s)\chi_{0}(t)\mathcal{A}(\overline{\chi_{0}})\mathcal{B}(\overline{\chi_{0}}) =\frac{1}{2(p-1)}\sum_{|a| \le A} \chi_{0}(a)\sum_{|b| \le B} \chi_{0}(b)
= \frac{2AB}{p}+O\left(\frac{AB}{p^{2}}+\frac{A+B}{p}\right).
\end{equation*}
Therefore,
\begin{align*}
&\frac{2}{|\mathcal{C}|} \sum_{p \le x}\sum_{d|p-1} g(d) \sum_{\substack{1 \le s,t < p \\ E_{s,t}(\mathbb{F}_{p})[d] \cong (\mathbb{Z}/d\mathbb{Z})^{2}}} \frac{1}{p-1}\left(\sum_{\substack{|a| \le A, |b| \le B:\\ \exists 1 \le u < p \\ a \equiv su^{4} \bmod{p} \\b \equiv tu^{6} \bmod{p}}}1-\frac{2AB}{p}\right) \\
\qquad &= \frac{2}{|\mathcal{C}|} \sum_{p \le x}\sum_{d|p-1} g(d) \sum_{\substack{1 \le s,t < p \\ E_{s,t}(\mathbb{F}_{p})[d] \cong (\mathbb{Z}/d\mathbb{Z})^{2}}} \frac{1}{p-1} \Bigg(O\left(\frac{AB}{p^{2}}+\frac{A+B}{p}\right)+\frac{1}{2(p-1)}\sum_{\substack{ \chi_{2} \ne \chi_{0} \\ \chi_{2}^{6}=\chi_{0}}} \chi_{0}(s)\chi_{2}(t)\mathcal{A}(\overline{\chi_{0}})\mathcal{B}(\overline{\chi_{2}})\\
&\qquad \qquad+\frac{1}{2(p-1)}\sum_{\substack{\chi_{1}\ne \chi_{0} \\ \chi_{1}^{4}=\chi_{0}}} \chi_{1}(s)\chi_{0}(t)\mathcal{A}(\overline{\chi_{1}})\mathcal{B}(\overline{\chi_{0}})+\frac{1}{2(p-1)}\sum_{\substack{\chi_{1} \ne \chi_{0} \\ \chi_{2} \ne \chi_{0} \\ \chi_{1}^{4}\chi_{2}^{6}=\chi_{0}}} \chi_{1}(s)\chi_{2}(t)\mathcal{A}(\overline{\chi_{1}})\mathcal{B}(\overline{\chi_{2}})\Bigg)\\
&=:\Sigma_{1}+\Sigma_{2}+\Sigma_{3}+\Sigma_{4}.
\end{align*}
We will evaluate each summation separately.
\subsubsection{Estimation of $\Sigma_1$}\label{sigma1}
\indent We have
\begin{align*}
\Sigma_{1}&:=\frac{2}{|\mathcal{C}|} \sum_{p \le x}\sum_{d|p-1} g(d) \sum_{\substack{1 \le s,t < p \\ E_{s,t}(\mathbb{F}_{p})[d] \cong (\mathbb{Z}/d\mathbb{Z})^{2}}} \frac{1}{p-1} O\left(\frac{AB}{p^{2}}+\frac{A+B}{p}\right) \\
&\ll \frac{1}{|\mathcal{C}|} \sum_{p \le x}\left(\frac{AB}{p^{3}}+\frac{A+B}{p^{2}}\right)\sum_{d|p-1} |g(d)| \sum_{\substack{1 \le s,t < p \\ E_{s,t}(\mathbb{F}_{p})[d] \cong (\mathbb{Z}/d\mathbb{Z})^{2}}} 1\\
&\ll \frac{AB}{|\mathcal{C}|} \sum_{p \le x}\frac{1}{p^{3}}\sum_{\substack{d|p-1 \\ d \le \sqrt{p}+1}} |g(d)|\left(\frac{p(p-1)}{d\psi(d)\varphi(d)}+O(p^{3/2})\right)+\left(\frac{A+B}{|\mathcal{C}|}\right) \sum_{p \le x}\frac{1}{p^{2}}\sum_{\substack{d|p-1 \\ d \le \sqrt{p}+1}} |g(d)|\left(\frac{p(p-1)}{d\psi(d)\varphi(d)}+O(p^{3/2})\right).
\end{align*}
We denote the first summation by $\Sigma_{1,1}$ and the second by $\Sigma_{1,2}$.  By partial summation and \eqref{eq:3.3}, we have
\begin{align}\label{eq:4.1}
\Sigma_{1,1} &\ll 
x^{\frac{\beta-1}{2}}(\log{x})^{\gamma+1}\log\log{x}+(\log{x})^\gamma \log\log{x}
\end{align}
as $\beta < 3/2$.\\
\indent By Equations (\ref{eq:3.2}) and (\ref{eq:3.4}), we have
\begin{align}\label{eq:4.2}
\Sigma_{1,2} &\ll \left(\frac{1}{A}+\frac{1}{B}\right)\left(\sum_{p \le x} \sum_{\substack{d|p-1 \\ d \le \sqrt{p}+1}} \frac{|g(d)|}{d\psi(d)\varphi(d)}+\sum_{p \le x} \frac{1}{p^{1/2}}\sum_{\substack{d|p-1 \\ d \le \sqrt{p}+1}}|g(d)|\right) \notag\\
&\ll \left(\frac{1}{A}+\frac{1}{B}\right)\left(\frac{x}{\log{x}}+x^{\frac{1+\beta}{2}}(\log x)^{\gamma}\log\log{x}\right).
\end{align}
Therefore, $\Sigma_{1}$ is bounded by the error terms in the lemma.
\subsubsection{Estimations of $\Sigma_2$ and $\Sigma_3$}\label{subsubsec:3.4.2}
\indent For $\Sigma_{2}$, we have
\begin{align*}
\Sigma_{2}&:= \frac{1}{|\mathcal{C}|} \sum_{p \le x}\sum_{\substack{d|p-1 \\ d \le \sqrt{p}+1}} g(d) \sum_{\substack{1 \le s,t < p \\ E_{s,t}(\mathbb{F}_{p})[d] \cong (\mathbb{Z}/d\mathbb{Z})^{2}}} \frac{1}{(p-1)^{2}}\sum_{\substack{\chi_{2} \ne \chi_{0} \\ \chi_{2}^{6}=\chi_{0}}} \chi_{0}(s)\chi_{2}(t)\mathcal{A}(\overline{\chi_{0}})\mathcal{B}(\overline{\chi_{2}})\\
&\ll\frac{1}{|\mathcal{C}|}\sum_{p \le x}\sum_{\substack{d|p-1 \\ d \le \sqrt{p}+1}} |g(d)| \sum_{\substack{1 \le s,t < p \\ E_{s,t}(\mathbb{F}_{p})[d] \cong (\mathbb{Z}/d\mathbb{Z})^{2}}} \frac{1}{p^{2}}\sum_{\substack{\chi_{2} \ne \chi_{0} \\ \chi_{2}^{6}=\chi_{0}}} |\mathcal{B}(\overline{\chi_{2}})|\sum_{\substack{-A \le a \le A \\ p \nmid a}}1\\
&\ll \frac{A}{|\mathcal{C}|}\sum_{p \le x} \frac{1}{p^{2}}\sum_{\substack{d|p-1 \\ d \le \sqrt{p}+1}} |g(d)| \sum_{\substack{\chi_{2} \ne \chi_{0} \\ \chi_{2}^{6}=\chi_{0}}} |\mathcal{B}(\overline{\chi_{2}})|\sum_{\substack{1 \le s,t < p \\ E_{s,t}(\mathbb{F}_{p})[d] \cong (\mathbb{Z}/d\mathbb{Z})^{2}}}1.
\end{align*}
By Lemma \ref{lemma:1}, we have
\begin{align*}
\Sigma_{2} &\ll \frac{1}{B}\sum_{p \le x} \frac{1}{p^{2}}\sum_{\substack{d|p-1 \\ d \le \sqrt{p}+1}} |g(d)| \sum_{\substack{\chi_{2} \ne \chi_{0} \\ \chi_{2}^{6}=\chi_{0}}} |\mathcal{B}(\overline{\chi_{2}})|\left(\frac{p(p-1)}{d\psi(d)\varphi(d)}+O\left(p^{3/2}\right)\right) \\
&\ll \frac{1}{B}\sum_{p \le x} \sum_{\substack{d|p-1 \\ d \le \sqrt{p}+1}} \frac{|g(d)|}{d\psi(d)\varphi(d)} \sum_{\substack{\chi_{2} \ne \chi_{0} \\ \chi_{2}^{6}=\chi_{0}}} |\mathcal{B}(\overline{\chi_{2}})|+\frac{1}{B}\sum_{p \le x} \frac{1}{p^{1/2}}\sum_{\substack{d|p-1 \\ d \le \sqrt{p}+1}} |g(d)| \sum_{\substack{\chi_{2} \ne \chi_{0} \\ \chi_{2}^{6}=\chi_{0}}} |\mathcal{B}(\overline{\chi_{2}})|\\
&=:\Sigma_{2,1}+\Sigma_{2.2}.
\end{align*}
Now,
\begin{equation}\label{eq:4.3}
\Sigma_{2,1} = \frac{1}{B}\sum_{d \le \sqrt{x}+1} \frac{|g(d)|}{d\psi(d)\varphi(d)} \sum_{\substack{p \le x \\ p \equiv 1 \bmod{d}}} \sum_{\substack{\chi_{2} \ne \chi_{0} \\ \chi_{2}^{6}=\chi_{0}}} |\mathcal{B}(\overline{\chi_{2}})|.
\end{equation}
Let $k=[2\log x/\log B]+1$.  By H\"{o}lder's inequality, we have
\begin{align}\label{eq:4.4}
\sum_{\substack{p \le x \\ p \equiv 1 \bmod{d}}} \sum_{\substack{\chi_{2} \ne \chi_{0} \\ \chi_{2}^{6}=\chi_{0}}} |\mathcal{B}(\overline{\chi_{2}})| &\le \left(\sum_{\substack{p \le x \\ p \equiv 1 \bmod{d}}} \sum_{\substack{\chi_{2} \ne \chi_{0} \\ \chi_{2}^{6}=\chi_{0}}}1\right)^{1-\frac{1}{2k}}\left(\sum_{\substack{p \le x \\ p \equiv 1 \bmod{d}}}\sum_{\substack{\chi_{2} \ne \chi_{0} \\ \chi_{2}^{6}=\chi_{0}}} \left|\sum_{b \le B} \chi_{2}(b)\right|^{2k}\right)^{\frac{1}{2k}} \notag \\
&\ll (\pi(x;d,1))^{1-\frac{1}{2k}}\left(\sum_{p \le x}\sum_{\substack{\chi _{2}\ne \chi_{0}}} \left|\sum_{b \le B^{k}} \tau_{k, B}(b)\chi_{2}(b)\right|^{2}\right)^{\frac{1}{2k}},
\end{align}
where $\tau_{k, B}(n):=\#\left\{(a_{1},a_{2},\dotsc,a_{k}) \in [1,B]^{k} \cap \mathbb{N}^{k}:n=a_{1}a_{2}\dotsm a_{k}\right\}$.  By Lemma \ref{largesieve}, we have
\begin{equation}\label{eq:4.5}
\sum_{p \le x}\sum_{\substack{\chi \ne \chi_{0}}} \left|\sum_{b \le B^{k}} \tau_{k, B}(b)\chi(b)\right|^{2} \ll (x^{2}+B^{k})\sum_{b \le B^{k}} \tau_{k, B}(b)^{2}.
\end{equation}
Suppose $k=1$.  That is, $B > x^{2}$.  Then, we obtain
\begin{equation*}
\sum_{p \le x}\sum_{\substack{\chi_2 \ne \chi_{0}}} \left|\sum_{b \le B^{k}} \tau_{1}^{B}(b)\chi_{2}(b)\right|^{2} \ll B^{2}.
\end{equation*}
Therefore from \eqref{eq:4.4} we have
\begin{equation*}
\sum_{\substack{p \le x \\ p \equiv 1 \bmod{d}}} \sum_{\substack{\chi_{2} \ne \chi_{0} \\ \chi_{2}^{6}=\chi_{0}}} |\mathcal{B}(\overline{\chi_{2}})| \ll B\frac{x^{1/2}}{\varphi(d)^{1/2}(\log x)^{1/2}}
\end{equation*}
after using Lemma \ref{phi} (i).  Substituting this into (\ref{eq:4.3}), we obtain
\begin{equation*}
\Sigma_{2,1} \ll \frac{x^{1/2}}{(\log x)^{1/2}}\sum_{d \le x} \frac{|g(d)|}{d\psi(d)\varphi(d)^{3/2}} \ll \frac{x^{1/2}}{(\log x)^{1/2}},
\end{equation*}
as $\beta < 3/2$ and the summation above was previously determined to be a constant.\\
\indent Now suppose $k=[2\log{x}/\log{B}]+1 > 1$.  Then $B\leq x^2$ and $x^2 < B^k \leq Bx^2 \leq x^4$. 
Then, by Lemma \ref{Stephens} (i) and (ii), (\ref{eq:4.4}), (\ref{eq:4.5}), and the trivial bound for $\pi(x;d,1)$, we have
\begin{align}\label{eq:4.6}
\sum_{\substack{p \le x \\ p \equiv 1 \bmod{d}}} \sum_{\substack{\chi_{2} \ne \chi_{0} \\ \chi_{2}^{6}=\chi_{0}}} |\mathcal{B}(\overline{\chi_{2}})| &\ll \left(\frac{x}{d}\right)^{1-\frac{1}{2k}}\left((x^{2}+B^{k})B^{k}\Big(\Psi(B,9\log x)\Big)^{k}\right)^{\frac{1}{2k}} \notag\\
&\ll B\frac{x}{d^{3/4}} x^{-\frac{1}{2k}}\Big(\Psi(B,9\log x)\Big)^{1/2} \notag\\
&\ll B\frac{x}{d^{3/4}}\exp\left(-c_{2}\frac{(\log x)^{1/2}}{\log \log x}\right),
\end{align}
where $c_{2}>0$ if $c_1$ is sufficiently large.
Substituting (\ref{eq:4.6}) into (\ref{eq:4.3}), we obtain
\begin{equation*}
\Sigma_{2,1} \ll x\exp\left(-c_{2}\frac{(\log x)^{1/2}}{\log \log x}\right)\sum_{d \le x} \frac{|g(d)|}{d^{7/4}\psi(d)\varphi(d)} \ll x\exp\left(-c_{2}\frac{(\log x)^{1/2}}{\log \log x}\right),
\end{equation*}
as $\beta < 3/2$.\\
\indent For $\Sigma_{2,2}$, 
by Lemma \ref{Burgess} (i),
\eqref{bound},
and Lemma \ref{phi} (i), (ii), and (v), we have
\begin{align*}
\Sigma_{2,2} &= \frac{1}{B}\sum_{p \le x} \frac{1}{p^{1/2}}\sum_{\substack{d|p-1 \\ d \le \sqrt{p}+1}} |g(d)| \sum_{\substack{\chi_{2} \ne \chi_{0} \\ \chi_{2}^{6}=\chi_{0}}} |\mathcal{B}(\overline{\chi_{2}})(b)| \ll \frac{1}{B}\sum_{d \le \sqrt{x}+1} |g(d)| \sum_{\substack{p \le x \\ p \equiv 1 \bmod{d}}} \frac{1}{p^{1/2}} \sum_{\substack{\chi_{2} \ne \chi_{0} \\ \chi_{2}^{6}=\chi_{0}}} \left|\sum_{b \le B} \chi_{2}(b)\right| \\
&\ll \frac{1}{B^{\frac{1}{r}}} \sum_{d \le \sqrt{x}+1} |g(d)| \sum_{\substack{p \le x \\ p \equiv 1 \bmod{d}}} p^{\frac{-2r^{2}+r+1}{4r^{2}}}\log p \sum_{\substack{\chi_{2} \ne \chi_{0} \\ \chi_{2}^{6}=\chi_{0}}} 1 \ll \frac{x^{\frac{1}{2}+\frac{r+1}{4r^{2}}}\log\log{x}}{B^{\frac{1}{r}}}\sum_{d \le \sqrt{x}+1} \frac{|g(d)|}{d} \\
&\ll \frac{x^{\frac{1+\beta}{2}+\frac{r+1}{4r^{2}}}(\log x)^{\gamma+1}\log \log x}{B^{\frac{1}{r}}}.
\end{align*}
\indent The proof of the bound for $\Sigma_{2}$ gives us the same bound for $\Sigma_{3}$, \textit{mutatis mutandis}.
\subsubsection{Estimation of $\Sigma_4$}
\indent For $\Sigma_{4}$, we have
\begin{align*}
\Sigma_{4} &= \frac{2}{|\mathcal{C}|} \sum_{p \le x}\sum_{\substack{d|p-1 \\ d \le \sqrt{p}+1}} g(d) \sum_{\substack{1 \le s,t < p \\ E_{s,t}(\mathbb{F}_{p})[d] \cong (\mathbb{Z}/d\mathbb{Z})^{2}}} \frac{1}{2(p-1)^{2}}\sum_{\substack{\chi_{1} \ne \chi_{0} \\ \chi_{2} \ne \chi_{0} \\ \chi_{1}^{4}\chi_{2}^{6}=\chi_{0}}} \chi_{1}(s)\chi_{2}(t)\mathcal{A}(\overline{\chi_{1}})\mathcal{B}(\overline{\chi_{2}})\\
&= \frac{1}{|\mathcal{C}|} \sum_{d \le \sqrt{x}+1} g(d) \sum_{\substack{p \le x \\ p \equiv 1 \bmod{d}}} \frac{1}{(p-1)^{2}} \sum_{\substack{\chi_{1} \ne \chi_{0} \\ \chi_{2} \ne \chi_{0} \\ \chi_{1}^{4}\chi_{2}^{6}=\chi_{0}}} \mathcal{A}(\overline{\chi_{1}})\mathcal{B}(\overline{\chi_{2}}) \sum_{\substack{1 \le s,t < p \\ E_{s,t}(\mathbb{F}_{p})[d] \cong (\mathbb{Z}/d\mathbb{Z})^{2}}} \chi_{1}(s)\chi_{2}(t)\\
&= \frac{1}{|\mathcal{C}|} \sum_{d \le \sqrt{x}+1} g(d) \sum_{\substack{p \le x \\ p \equiv 1 \bmod{d}}} \frac{1}{(p-1)^{2}} \sum_{\substack{\chi_{1} \ne \chi_{0} \\ \chi_{2} \ne \chi_{0} \\ \chi_{1}^{4}\chi_{2}^{6}=\chi_{0}}} \mathcal{A}(\overline{\chi_{1}})\mathcal{B}(\overline{\chi_{2}})\mathcal{W}_{p,d}(\chi_{1},\chi_{2}),
\end{align*}
where
\begin{equation*}
\mathcal{W}_{p,d}(\chi_{1},\chi_{2}):=\sum_{\substack{1 \le s,t < p \\ E_{s,t}(\mathbb{F}_{p})[d] \cong (\mathbb{Z}/d\mathbb{Z})^{2}}} \chi_{1}(s)\chi_{2}(t).
\end{equation*}
Applying the Cauchy-Schwarz inequality twice, we obtain
\begin{equation*}
\left|\sum_{\substack{\chi_{1} \ne \chi_{0} \\ \chi_{2} \ne \chi_{0} \\ \chi_{1}^{4}\chi_{2}^{6}=\chi_{0}}} \mathcal{A}(\overline{\chi_{1}})\mathcal{B}(\overline{\chi_{2}})\mathcal{W}_{p,d}(\chi_{1},\chi_{2})\right|^{4} \le \left(\sum_{\substack{\chi_{1} \ne \chi_{0} \\ \chi_{2} \ne \chi_{0} \\ \chi_{1}^{4}\chi_{2}^{6}=\chi_{0}}} \bigl|\mathcal{W}_{p,d}(\chi_{1},\chi_{2})\bigr|^{2}\right)^{2}\left(\sum_{\substack{\chi_{1} \ne \chi_{0} \\ \chi_{2} \ne \chi_{0} \\ \chi_{1}^{4}\chi_{2}^{6}=\chi_{0}}} |\mathcal{A}(\chi_{1})|^{4}\right)\left(\sum_{\substack{\chi_{1} \ne \chi_{0} \\ \chi_{2} \ne \chi_{0} \\ \chi_{1}^{4}\chi_{2}^{6}=\chi_{0}}} |\mathcal{B}(\chi_{2})|^{4}\right).
\end{equation*}
By Lemma \ref{FI}, we have
\begin{equation*}
\sum_{\chi_{1} \ne \chi_{0}} \left|\sum_{a \le A} \chi_{1}(a)\right|^{4} \ll A^{2}p(\log p)^{6}.
\end{equation*}
Hence,
\begin{align*}
\sum_{\substack{\chi_{1} \ne \chi_{0} \\ \chi_{2} \ne \chi_{0} \\ \chi_{1}^{4}\chi_{2}^{6}=\chi_{0}}} |\mathcal{A}(\chi_{1})|^{4} &= \sum_{\substack{\chi_{1} \ne \chi_{0} \\ \chi_{2} \ne \chi_{0} \\ \chi_{1}^{4}\chi_{2}^{6}=\chi_{0}}} \left|\sum_{|a| \le A} \chi_{1}(a)\right|^{4} \le 16\sum_{\chi_{1} \ne \chi_{0}} \left|\sum_{a \le A} \chi_{1}(a)\right|^{4}\sum_{\substack{\chi_{2} \ne \chi_{0} \\ \chi_{1}^{4}\chi_{2}^{6}=\chi_{0}}}1\\
&\ll \sum_{\chi_{1} \ne \chi_{0}} \left|\sum_{a \le A} \chi_{1}(a)\right|^{4} \ll A^{2}p(\log p)^{6}.
\end{align*}
Similarly,
\begin{equation*}
\sum_{\substack{\chi_{1} \ne \chi_{0} \\ \chi_{2} \ne \chi_{0} \\ \chi_{1}^{4}\chi_{2}^{6}=\chi_{0}}} |\mathcal{B}(\chi_{2})|^{4} \ll B^{2}p(\log p)^{6}.
\end{equation*}
Also,
\begin{align}\label{eq:4.8}
\sum_{\chi_{1},\chi_{2}} \bigl|\mathcal{W}_{p,d}(\chi_{1},\chi_{2})\bigr|^{2} &= \sum_{\chi_{1},\chi_{2}}\sum_{\substack{1 \le s,t < p \\ E_{s,t}(\mathbb{F}_{p})[d] \cong (\mathbb{Z}/d\mathbb{Z})^{2}}} \chi_{1}(s)\chi_{2}(t)\sum_{\substack{1 \le s^{\prime},t^{\prime} < p \\ E_{s^{\prime},t^{\prime}}(\mathbb{F}_{p})[d] \cong (\mathbb{Z}/d\mathbb{Z})^{2}}} \overline{\chi_{1}}(s^{\prime})\overline{\chi_{2}}(t^{\prime})\notag\\
&=\sum_{\substack{1 \le s,t < p \\ E_{s,t}(\mathbb{F}_{p})[d] \cong (\mathbb{Z}/d\mathbb{Z})^{2}}} \sum_{\substack{1 \le s^{\prime},t^{\prime} < p \\ E_{s^{\prime},t^{\prime}}(\mathbb{F}_{p})[d] \cong (\mathbb{Z}/d\mathbb{Z})^{2}}} \sum_{\chi_{1}} \chi_{1}(s)\overline{\chi_{1}}(s^{\prime})\sum_{\chi_{2}} \chi_{2}(t)\overline{\chi_{2}}(t^{\prime}) \notag\\
&=(p-1)^{2} \sum_{\substack{1 \le s,t < p \\ E_{s,t}(\mathbb{F}_{p})[d] \cong (\mathbb{Z}/d\mathbb{Z})^{2}}} 1 \notag\\
&\ll \frac{p^{4}}{d\psi(d)\varphi(d)}+p^{7/2}
\end{align}
by Lemma \ref{lemma:1}.  Putting all this information together, we obtain
\begin{equation*}
\left|\sum_{\substack{\chi_{1} \ne \chi_{0} \\ \chi_{2} \ne \chi_{0} \\ \chi_{1}^{4}\chi_{2}^{6}=\chi_{0}}} \mathcal{A}(\overline{\chi_{1}})\mathcal{B}(\overline{\chi_{2}})\mathcal{W}_{p,d}(\chi_{1},\chi_{2})\right|^{4} \ll \frac{(AB)^{2}p^{10}(\log p)^{12}}{d^{2}\psi(d)^{2}\varphi(d)^{2}}+\frac{(AB)^{2}p^{19/2}(\log p)^{12}}{d\psi(d)\varphi(d)}+(AB)^{2}p^{9}(\log p)^{12}.
\end{equation*}
Hence,
\begin{align*}
\Sigma_{4} &\ll \frac{1}{|\mathcal{C}|} \sum_{d \le \sqrt{x}+1} |g(d)| \sum_{\substack{p \le x \\ p \equiv 1 \bmod{d}}} \sqrt{AB}(\log p)^{3}\left(\frac{p^{1/2}}{d^{1/2}\psi(d)^{1/2}\varphi(d)^{1/2}}+\frac{p^{3/8}}{d^{1/4}\psi(d)^{1/4}\varphi(d)^{1/4}}+p^{1/4}\right) \\
&\ll \frac{1}{\sqrt{AB}}\left(x^{\frac{3}{2}}(\log x)^{2}+x^{1+\frac{\beta}{2}}(\log x)^{\gamma+3}(\log \log x)^{\frac{5}{4}}   +x^{\frac{5+2\beta}{4}}(\log x)^{\gamma+3} \log\log{x}  \right),
\end{align*}
as $\beta < 3/2$.  This completes the proof.
\end{proof}
\subsection{Proof of Theorem \ref{theorem1} }\begin{proof}
By combining \eqref{mterm}, \eqref{Error1}, and Lemma \ref{lemma:2}, we have
\begin{equation*}
\frac{1}{|\mathcal{C}|}\sum_{E_{a, b}\in \mathcal{C}} \sum_{p\leq x} f(i_{E_{a, b}}(p)) = \left(\sum_{d \ge 1} \frac{g(d)}{d\psi(d)\varphi(d)^2}\right)
 {\rm li}(x)+E,
 \end{equation*}
 where 
 \begin{align*}
E& \ll \frac{x}{(\log{x})^{c}}+\left(\frac{1}{A}+\frac{1}{B}\right)\left(\frac{x}{\log{x}}+x^{\frac{1+\beta}{2}}(\log x)^{\gamma+2}\right)+\left(\frac{1}{A^{1/r}}+\frac{1}{B^{1/r}}\right)x^{\frac{1+\beta}{2}+\frac{r+1}{4r^{2}}}(\log x)^{\gamma+1}\log \log x\\
&\qquad \qquad+\frac{1}{\sqrt{AB}}\left(x^{\frac{3}{2}}(\log x)^{2}+x^{1+\frac{\beta}{2}}(\log x)^{\gamma+3}(\log \log x)^{5/4} + x^{\frac{5+2\beta}{4}}(\log x)^{\gamma+3} \log\log{x}    \right),
\end{align*}
for given $c>1$ and $A,B>\exp\left( c_1 (\log{x})^{1/2} \right)$. Now
we choose $r$ large enough such that $\frac{1+\beta}{2}+\frac{r+1}{4r^2}<1$. (Note that we can do this if $\beta<1$.) So we arrive at the following upper bound for $E$. We have
\begin{align*}
E& \ll \frac{x}{(\log{x})^{c}}+x\exp\left(- \frac{c_1}{r} (\log{x})^{1/2} \right)  +\frac{1}{\sqrt{AB}}\left(x^{\frac{3}{2}}(\log x)^{2}+x^{\frac{5+2\beta}{4}}(\log x)^{\gamma+3}\log\log{x}\right).
\end{align*}
Now the result follows by choosing $AB\geq x(\log{x})^{4+2c}$ if $\beta<1/2$ and $AB\geq x^{1/2+\beta}(\log{x})^{2\gamma+6+2c}(\log\log{x})^2$ if $1/2\leq\beta<1$. 
\end{proof}

\subsection{Proof of Corollary \ref{corollaries}}\begin{proof}
Parts (i) and (ii) hold, since the characteristic function of $\{1\}$
can be written as
\begin{equation*}
\sum_{d|n} \mu(d)
\end{equation*}
and the divisor function can be written as
\begin{equation*}
\tau(n)=\sum_{d|n} 1.
\end{equation*}
Thus, $g(d)=\mu(d)$ and $g(d)=1$ both satisfy (\ref{bound}) with
$\beta=0$ and $\gamma=1$.\\
\indent For (iii), let $f(n)=1/n^{k}$, where $k \in \mathbb{N}$.  Then, writing
\begin{equation*}
f(n)=\sum_{d|n} g(d),
\end{equation*}
gives us that
\begin{equation*}
|g(n)|=\sum_{d|n} \left|\mu\left(\frac{n}{d}\right)f(d)\right| \le
\sum_{d|n} 1 \ll \tau(n).
\end{equation*}
Therefore, by Theorem \ref{theorem1}, we have
\begin{equation}\label{eq:cor}
\frac{1}{|\mathcal{C}|}\sum_{E \in \mathcal{C}} \sum_{p \le x}
\frac{1}{i_{E}(p)^{k}}=C_{k}\li(x)+O\left(\frac{x}{(\log
x)^{c}}\right).
\end{equation}
where $C_{k}$ is defined in the corollary.  Let $a_{p}(E)$ be defined by $\# E_{p}(\mathbb{F}_{p})=p+1-a_{p}(E)$.
Hasse's Theorem says that $|a_{p}(E)| \le 2\sqrt{p}$.  Note that
\begin{align*}
\sum_{E \in \mathcal{C}} \sum_{p \le x} e_{E}(p)^{k} &= \sum_{E \in
\mathcal{C}} \sum_{p \le x}
\left(\frac{p+1-a_{E}(p)}{i_{E}(p)}\right)^{k} = \sum_{E \in
\mathcal{C}} \sum_{p \le x}
\left(\frac{p^{k}}{i_{E}(p)^{k}}+\sum_{j=1}^{k} \binom{k}{j}
\frac{p^{k-j}(1-a_{p}(E))^{j}}{i_{E}(p)^{k}}\right) \\
&=\sum_{E \in \mathcal{C}} \sum_{p \le x}
\frac{p^{k}}{i_{E}(p)^{k}}+O_{k}\left(x^{k-\frac{1}{2}}\sum_{E \in
\mathcal{C}}\sum_{p \le x} \frac{1}{i_{p}(E)^{k}}\right)\\
&=\sum_{E \in \mathcal{C}} \sum_{p \le x}
\frac{p^{k}}{i_{E}(p)^{k}}+O_{k}\left(\frac{|\mathcal{C}|x^{k+\frac{1}{2}}}{\log{x}}\right).
\end{align*}
For the first part in the above, by (\ref{eq:cor}), we have
\begin{align*}
\frac{1}{|\mathcal{C}|}\sum_{E \in \mathcal{C}} \sum_{p \le x} \frac{p^{k}}{i_{E}(p)^{k}} 
&= C_{k}x^{k}\li(x)+O\left(\frac{x^{k+1}}{(\log
x)^{c}}\right)-C_{k}k\int_{2}^{x}t^{k-1}\li(t) \,\,
dt+O_{k}\left(\int_{2}^{x} \frac{t^{k}}{(\log t)^{c}} \,\, dt\right)\\
&=C_{k}x^{k}\li(x)-C_{k}k\int_{2}^{x}t^{k-1}\li(t) \,\,
dt+O\left(\frac{x^{k+1}}{(\log x)^{c}}\right).
\end{align*}
Then, the result holds since that there exists a constant $C$ such that
\begin{equation*}
\li(x^{k+1})+C=x^{k}\li(x)-k\int_{2}^{x}t^{k-1}\li(t) \,\, dt.
\end{equation*}
\end{proof}
%

\section{A TECHNICAL LEMMA}
\begin{lemma}\label{lemma:3}
Let $r \in \mathbb{N}$ and $\varepsilon > 0$ be fixed. Let $g:\mathbb{N} \to \mathbb{C}$ be a function such that
\begin{equation*}
\sum_{d \le x} |g(d)| \ll x^{1+\beta}(\log x)^{\gamma},
\end{equation*} 
where $0\leq \beta <3/4$ and $\gamma\in \mathbb{R}_{\geq 0}$. Then there are positive constants $c_1$ and $c_3$ such that if $A,B > \exp(c_{1}(\log x)^{1/2})$ we have
\begin{align*}
&\frac{4}{|\mathcal{C}|}\sum_{\substack{p,q \le x \\ p \ne q}} \frac{1}{(p-1)(q-1)} \sum_{\substack{s,t \in \mathbb{F}_{p}^{\times} \\ s^{\prime},t^{\prime} \in \mathbb{F}_{q}^{\times}}} \sum_{\substack{d|i_{E_{s,t}}(p) \\ d^{\prime}|i_{E_{s^{\prime},t^{\prime}}}(q)}} g(d)g(d^{\prime}) \left(\sum_{\substack{|a| \le A, |b| \le B: \\ \exists 1 \le u < p, 1 \le u^{\prime} < q \\ a \equiv su^{4} \bmod{p}, a \equiv s^{\prime}(u^{\prime})^{4} \bmod{q} \\ b \equiv tu^{6} \bmod{p}, b \equiv t^{\prime}(u^{\prime})^{6} \bmod{q}}} 1- \frac{AB}{pq}\right)\\
&\qquad \qquad \ll x(\log x)^{\gamma-1}(\log\log{x})+\left(\frac{1}{A}+\frac{1}{B}\right)\frac{x^2}{(\log{x})^2}+x^{2}\exp\left(-c_{3}\frac{(\log x)^{1/2}}{\log \log x}\right)
\\&\qquad \qquad \qquad +\left(\frac{1}{A^{1/r}}+\frac{1}{B^{1/r}}\right) x^{\frac{3+\beta}{2}+\frac{r+1}{2r^{2}}+2\varepsilon}(\log x)^{\gamma}\log\log{x}+\frac{1}{\sqrt{AB}}\left(x^{3}(\log x)+x^{\frac{11+2\beta}{4}}(\log x)^{2\gamma+3}(\log \log x)^2\right),
\end{align*}
where $c_{3}$ is a positive constant.
\end{lemma}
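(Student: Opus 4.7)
The plan is to follow the blueprint of Lemma~\ref{lemma:2}, upgraded to a two-prime setting via the Chinese Remainder Theorem. The starting identity, obtained by applying the character decomposition of \cite[Eq.~(12)]{BCD} independently at $p$ and at $q$, is
\begin{equation*}
\sum_{\substack{|a|\le A,|b|\le B:\\ \exists\,1\le u<p,\ 1\le u'<q\\ a\equiv su^4,\ b\equiv tu^6\,(\mathrm{mod}\,p)\\ a\equiv s'(u')^4,\ b\equiv t'(u')^6\,(\mathrm{mod}\,q)}}\!\!1
=\frac{1}{4(p-1)(q-1)}\sum_{\substack{\chi_1^4\chi_2^6=\chi_0\,(p)\\(\chi_1')^4(\chi_2')^6=\chi_0\,(q)}}\chi_1(s)\chi_2(t)\chi_1'(s')\chi_2'(t')\,\mathcal{A}(\overline{\chi_1\chi_1'})\mathcal{B}(\overline{\chi_2\chi_2'}),
\end{equation*}
where $\chi_1\chi_1'$ and $\chi_2\chi_2'$ are interpreted as Dirichlet characters modulo $pq$ via CRT, and $\mathcal{A},\mathcal{B}$ are character sums of length $2A+1$ and $2B+1$ modulo $pq$. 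Isolating the $(\chi_1,\chi_2,\chi_1',\chi_2')=(\chi_0,\chi_0,\chi_0,\chi_0)$ contribution reproduces $AB/(pq)$ up to $O(AB/(pq)^2+(A+B)/(pq))$, whose contribution to the full sum is bounded exactly as in the treatment of $\Sigma_1$ in Lemma~\ref{lemma:2}. The remaining sixteen (really, a handful of essentially distinct) combinations of $(\chi_j,\chi_j')$-triviality give the error analogs of $\Sigma_2,\Sigma_3,\Sigma_4$ for each prime slot.

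I would then treat the resulting character sums case by case, organised by how many of the four characters $\chi_1,\chi_2,\chi_1',\chi_2'$ are principal. \textbf{(a)} When three of them are principal (say only $\chi_2\ne\chi_0$ mod $p$), the argument of $\Sigma_2$ in Lemma~\ref{lemma:2} applies verbatim on the $p$-factor, while the sum on the $q$-side contributes a factor of $\pi(x;d',1)$; this produces the $x(\log x)^{\gamma-1}\log\log x$ and $(A^{-1}+B^{-1})x^2/(\log x)^2$ terms after running the H\"older/large-sieve/Stephens machinery of Lemma~\ref{Stephens} on one side and the Siegel--Walfisz/Brun--Titchmarsh bounds on the other. \textbf{(b)} When exactly one character mod $p$ and one character mod $q$ are non-principal, we again factor: applying Stephens's bound simultaneously on both sides yields the $x^2\exp(-c_3(\log x)^{1/2}/\log\log x)$ term, while switching to Burgess (Lemma~\ref{Burgess}) on whichever side exceeds $x^2$ produces the Burgess term $(A^{-1/r}+B^{-1/r})x^{(3+\beta)/2+(r+1)/(2r^2)+2\varepsilon}(\log x)^{\gamma}\log\log x$, the $2\varepsilon$ coming from the $p^{\varepsilon}$ loss in summing $p\le x$ over the two primes.

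\textbf{(c)} The main combinatorial novelty is the fully non-principal case $\chi_1,\chi_2,\chi_1',\chi_2'\ne\chi_0$, in which $\chi_1\chi_1'$ and $\chi_2\chi_2'$ are non-principal characters modulo $pq$. As in the treatment of $\Sigma_4$, I would apply Cauchy--Schwarz twice to separate out
\begin{equation*}
\Big(\sum_{\chi,\chi'}|\mathcal{W}_{p,d}(\chi_1,\chi_2)\mathcal{W}_{q,d'}(\chi_1',\chi_2')|^2\Big)^{1/2}\Big(\sum_{\chi,\chi'}|\mathcal{A}(\chi\chi')|^4\Big)^{1/4}\Big(\sum_{\chi,\chi'}|\mathcal{B}(\chi\chi')|^4\Big)^{1/4},
\end{equation*}
bound the first factor via two applications of Lemma~\ref{lemma:1} (mirroring \eqref{eq:4.8} separately mod $p$ and mod $q$), and handle each of the fourth-moment factors using Lemma~\ref{FI}(ii) with modulus $Q=pq$. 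Summing the resulting bound against $|g(d)g(d')|$ over $d\mid p-1$, $d'\mid q-1$, and over $p,q\le x$ yields the $(AB)^{-1/2}(x^3\log x+x^{(11+2\beta)/4}(\log x)^{2\gamma+3}(\log\log x)^2)$ term, where the second summand arises from the $p^{19/2}(\log p)^{12}/(d\psi(d)\varphi(d))$ contribution in the analog of \eqref{eq:4.8} once it is symmetrised in $(p,d)$ and $(q,d')$ and the divisor sum over $d,d'\le \sqrt{x}+1$ is executed with Lemma~\ref{phi}(v).

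The principal obstacle is case~(c): one must route the bookkeeping carefully so that the fourth-moment bound for characters modulo a product of two primes (Lemma~\ref{FI}(ii)) is available, and so that the subsequent double divisor sum over $(d,d')$ does not lose the factor of $\log\log x$ we can afford; handling the intermediate mixed cases where the $p$-side has no non-principal character but the $q$-side does (or vice versa) is straightforward but bookkeeping-heavy and introduces the $\log\log x$ factors visible in the final bound. The remaining subcases are pure cosmetic variants of (a)--(c), so after these three are treated with care the lemma assembles by summing the individual error contributions.
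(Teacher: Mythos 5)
Your proposal follows the same route as the paper: expand via the CRT character decomposition into sixteen cases by the principality of $(\chi_1,\chi_2,\chi_1',\chi_2')$, peel off the all-principal main term $AB/pq$, apply H\"older plus the large sieve plus Stephens's Lemma \ref{Stephens} together with Burgess in the mixed cases, and apply the double Cauchy--Schwarz plus Lemma \ref{FI}(ii) with modulus $pq$ in the cases where both of $\mathcal{A},\mathcal{B}$ see non-principal characters. One piece of bookkeeping is off, however: you attribute the terms $x(\log x)^{\gamma-1}\log\log x$ and $(A^{-1}+B^{-1})x^{2}/(\log x)^{2}$ to your case~(a) (exactly one non-principal character), but those actually arise from the all-principal case, from the errors $O(AB/(p^{2}q)+AB/(pq^{2})+(A+B)/(pq))$ in $\mathcal{A}(\overline{\chi_0\chi_0'})\mathcal{B}(\overline{\chi_0\chi_0'})$. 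You do already note that case at the top, so the contribution is not lost, but the ``one non-principal'' case actually produces bounds of the same shape as your case~(b) (the $x^{2}\exp(\cdot)$ and Burgess terms), since it still forces a non-trivial character sum in exactly one of $\mathcal{A}$ or $\mathcal{B}$; running the argument as you describe in~(a) would not give the $\Sigma_1$-type bounds you assign to it.
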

\begin{proof}
Throughout, a prime ${}^{\prime}$ superscript will denote that underlying object is related to the prime $q$.  Note that, for $p,q$ prime, $s,t \in \mathbb{F}_{p}^{\times}$ and $s^{\prime},t^{\prime} \in \mathbb{F}_{q}^{\times}$ fixed, by orthogonality relations, we have
\begin{align*}
\sum_{\substack{|a| \le A, |b| \le B: \\ \exists 1 \le u < p, 1 \le u^{\prime} < q \\ a \equiv su^{4} \bmod{p}, a \equiv s^{\prime}(u^{\prime})^{4} \bmod{q} \\ b \equiv tu^{6} \bmod{p}, b \equiv t^{\prime}(u^{\prime})^{6} \bmod{q}}} 1 &= \frac{1}{4} \sum_{1 \le u < p} \,\, \sum_{1 \le u^{\prime} < q} \,\, \sum_{|a| \le A} \,\, \sum_{|b| \le B} \left(\frac{1}{p-1} \sum_{\chi_{1} \bmod{p}} \chi_{1}(su^{4})\overline{\chi_{1}}(a)\right)\left(\frac{1}{p-1} \sum_{\chi_{2} \bmod{p}} \chi_{2}(tu^{6})\overline{\chi_{2}}(b)\right)\\
&\qquad \qquad \times \left(\frac{1}{q-1} \sum_{\chi_{1}^{\prime} \bmod{q}} \chi_{1}^{\prime}\left(s^{\prime}(u^{\prime})^{4}\right)\overline{\chi_{1}^{\prime}}(a)\right)\left(\frac{1}{q-1} \sum_{\chi_{2}^{\prime} \bmod{q}} \chi_{2}^{\prime}\left(t^{\prime}(u^{\prime})^{6}\right)\overline{\chi_{2}}(b)\right)\\
&=\frac{1}{4(p-1)(q-1)} \sum_{\substack{{\chi_{1}, \chi_{2} \bmod{p}}\\{\chi_1^4 \chi_2^6=\chi_{0}}}} \sum_{\substack{{\chi_{1}^{\prime}, \chi_{2}^{\prime} \bmod{q}}\\{(\chi_1^\prime)^4 (\chi_2^\prime)^6=\chi_{0}^{\prime}}}} \chi_{1}(s)\chi_{2}(t)\chi_{1}^{\prime}(s^{\prime})\chi_{2}^{\prime}(t^{\prime})\mathcal{A}(\overline{\chi_{1}\chi_{1}^{\prime}})\mathcal{B}(\overline{\chi_{2}\chi_{2}^{\prime}}),
\end{align*}
where
\begin{equation*}
\mathcal{A}(\chi):=\sum_{|a| \le A} \chi(a)
~~~~\qquad{\rm and}~~~~
\qquad\mathcal{B}(\chi):=\sum_{|b| \le B} \chi(b).
\end{equation*}
Thus,
\begin{equation*}
\sum_{\substack{|a| \le A, |b| \le B: \\ \exists 1 \le u < p, 1 \le u^{\prime} < q \\ a \equiv su^{4} \bmod{p}, a \equiv s^{\prime}(u^{\prime})^{4} \bmod{q} \\ b \equiv tu^{6} \bmod{p}, b \equiv t^{\prime}(u^{\prime})^{6} \bmod{q}}} 1=\sum_{j=1}^{16} S_j(p,q,s,t,s^{\prime},t^{\prime}),
\end{equation*}
where $S_{j}$ corresponds to one of the cases arising from choices of each of the following conditions:
\begin{equation*}
\left\{\substack{\displaystyle\chi_{1}=\chi_{0}, \chi_{2}=\chi_{0} \\ \displaystyle\chi_{1} = \chi_{0}, \chi_{2} \ne \chi_{0}: \chi_{2}^{6}=\chi_{0} \\ \displaystyle\chi_{1} \ne \chi_{0}, \chi_{2} = \chi_{0}: \chi_{1}^{4}=\chi_{0} \\ \displaystyle\chi_{1} \ne \chi_{0}, \chi_{2} \ne \chi_{0}: \chi_{1}^{4}\chi_{2}^{6}=\chi_{0}}\right\} \times \left\{\substack{\displaystyle\chi_{1}^{\prime}=\chi_{0}^{\prime}, \chi_{2}^{\prime}=\chi_{0}^{\prime} \\ \displaystyle\chi_{1}^{\prime} = \chi_{0}^{\prime}, \chi_{2}^{\prime} \ne \chi_{0}^{\prime}: \left(\chi_{2}^{\prime}\right)^{6}=\chi_{0}^{\prime} \\ \displaystyle\chi_{1}^{\prime} \ne \chi_{0}^{\prime}, \chi_{2}^{\prime} = \chi_{0}^{\prime}: \left(\chi_{1}^{\prime}\right)^{4}=\chi_{0}^{\prime} \\ \displaystyle\chi_{1}^{\prime} \ne \chi_{0}^{\prime}, \chi_{2}^{\prime} \ne \chi_{0}^{\prime}: \left(\chi_{1}^{\prime}\right)^{4}\left(\chi_{2}^{\prime}\right)^{6}=\chi_{0}^\prime}\right\}
\end{equation*}
From these 16 cases, there are essentially five different cases to handle.\\
\indent \underline{\textbf{Case 1}}:  all four of $\chi_{1},\chi_{2},\chi_{1}^{\prime},\chi_{2}^{\prime}$ are principal.\\
\indent Let this correspond to $j=1$.  Then, for $p \ne q$, we have
\begin{align*}
S_{1}(p,q,s,t,s^{\prime},t^{\prime})
&=\frac{AB}{pq}+O\left(\frac{AB}{p^{2}q}\right)+O\left(\frac{AB}{pq^{2}}\right)+O\left(\frac{A+B}{pq}\right).
\end{align*}
Thus, we have
\begin{align*}
&\frac{4}{|\mathcal{C}|}\sum_{\substack{p,q \le x \\ p \ne q}} \frac{1}{(p-1)(q-1)} \sum_{\substack{s,t \in \mathbb{F}_{p}^{\times} \\ s^{\prime},t^{\prime} \in \mathbb{F}_{q}^{\times}}} \sum_{\substack{d|i_{E_{s,t}}(p) \\ d^{\prime}|i_{E_{s^{\prime},t^{\prime}}}(q)}} g(d)g(d^{\prime}) \left(\sum_{\substack{|a| \le A, |b| \le B: \\ \exists 1 \le u < p, 1 \le u^{\prime} < q \\ a \equiv su^{4} \bmod{p}, a \equiv s^{\prime}(u^{\prime})^{4} \bmod{q} \\ b \equiv tu^{6} \bmod{p}, b \equiv t^{\prime}(u^{\prime})^{6} \bmod{q}}} 1- \frac{AB}{pq}\right)\\
&\qquad \qquad = \frac{4}{|\mathcal{C}|}\sum_{\substack{p,q \le x \\ p \ne q}} \frac{1}{(p-1)(q-1)} \sum_{\substack{s,t \in \mathbb{F}_{p}^{\times} \\ s^{\prime},t^{\prime} \in \mathbb{F}_{q}^{\times}}} \sum_{\substack{d|i_{E_{s,t}}(p) \\ d^{\prime}|i_{E_{s^{\prime},t^{\prime}}}(q)}} g(d)g(d^{\prime}) \left(\sum_{j=2}^{16} S(p,q,s,t,s^{\prime},t^{\prime})+O\left(\frac{AB}{p^{2}q}+\frac{AB}{pq^{2}}+\frac{A+B}{pq}\right)\right).
\end{align*}
The sums corresponding to $j=2,3,\dotsc,16$ are dealt with in Cases 2, 3, 4, and 5.  Here, we will bound the sums corresponding to the error terms above.  We have
\begin{align*}
&\frac{4}{|\mathcal{C}|}\sum_{\substack{p,q \le x \\ p \ne q}} \frac{1}{(p-1)(q-1)} \sum_{\substack{s,t \in \mathbb{F}_{p}^{\times} \\ s^{\prime},t^{\prime} \in \mathbb{F}_{q}^{\times}}} \sum_{\substack{d|i_{E_{s,t}}(p) \\ d^{\prime}|i_{E_{s^{\prime},t^{\prime}}}(q)}} g(d)g(d^{\prime}) \frac{AB}{p^{2}q} \\
&\qquad \ll \left(\sum_{p \le x} \frac{1}{p^{3}} \sum_{s,t \in \mathbb{F}_{p}^{\times}}\sum_{d|i_{E_{s,t}}(p)} |g(d)|\right)\left(\sum_{q \le x} \frac{1}{q^{2}} \sum_{s^{\prime},t^{\prime} \in \mathbb{F}_{q}^{\times}} \sum_{d|i_{E_{s^{\prime},t^{\prime}}}(q)}|g(d^{\prime})|\right).
\end{align*}
The first summation can be bounded as we bound $\Sigma_{1,1}$ in Subsection \ref{sigma1}, and the second summation can be bounded as we bound $\Sigma_{1,2}$ in Subsection \ref{sigma1}.  That is, by (\ref{eq:4.1}), (\ref{eq:4.2}), and $\beta<3/4$, we have
\begin{align*}
\frac{4}{|\mathcal{C}|}\sum_{p,q \le x} \frac{1}{(p-1)(q-1)} \sum_{\substack{s,t \in \mathbb{F}_{p}^{\times} \\ s^{\prime},t^{\prime} \in \mathbb{F}_{q}^{\times}}} \sum_{\substack{d|i_{E_{s,t}}(p) \\ d^{\prime}|i_{E_{s^{\prime},t^{\prime}}}(q)}} g(d)g(d^{\prime}) \frac{AB}{p^{2}q} \ll x(\log{x})^{\gamma-1} \log\log{x}.
\end{align*}
The same bound holds for the term coming from $O(AB/pq^2)$.
For the last error term, by (\ref{eq:4.2}), we have
\begin{align*}
&\frac{4}{|\mathcal{C}|}\sum_{\substack{p,q \le x \\ p \ne q}} \frac{1}{(p-1)(q-1)} \sum_{\substack{s,t \in \mathbb{F}_{p}^{\times} \\ s^{\prime},t^{\prime} \in \mathbb{F}_{q}^{\times}}} \sum_{\substack{d|i_{E_{s,t}}(p) \\ d^{\prime}|i_{E_{s^{\prime},t^{\prime}}}(q)}} g(d)g(d^{\prime}) \frac{A+B}{pq} \\
&\qquad \ll \left(\frac{1}{A}+\frac{1}{B}\right) \left(\sum_{p \le x} \frac{1}{p^{2}} \sum_{s,t \in \mathbb{F}_{p}^{\times}}\sum_{d|i_{E_{s,t}}(p)} |g(d)|\right)\left(\sum_{q \le x} \frac{1}{q^{2}} \sum_{s^{\prime},t^{\prime} \in \mathbb{F}_{q}^{\times}} \sum_{d|i_{E_{s^{\prime},t^{\prime}}}(q)}|g(d^{\prime})|\right) \\
&\qquad \ll \left(\frac{1}{A}+\frac{1}{B}\right) \frac{x^2}{(\log{x})^2}.
\end{align*}
\\
\indent \underline{\textbf{Case 2}}:  Exactly two of $\chi_1$, $\chi_2$, $\chi_1^\prime$, $\chi_2^\prime$ are principal. We have two subcases to consider.

\indent \underline{\textbf{Subcase 1}}: Exactly one of $\chi_{1}$ or $\chi_2$ is principal and exactly one of  $\chi_{1}^{\prime}$ or $\chi_2^{\prime}$ is principal. We will bound the summation when $\chi_1=\chi_0$ and $\chi_1^\prime=\chi_0^\prime$.  The bound for when $\chi_{1}=\chi_{0}$ and $\chi_{2}^{\prime}=\chi_{0}^{\prime}$ is similar.\\
\indent The estimation is analogous to estimations of $\Sigma_2$ and $\Sigma_3$ in Subsection \ref{subsubsec:3.4.2}. We note that $\chi_{0}\chi_{0}^{\prime}$ is the principal character modulo $pq$ since $p \ne q$.  Hence, $|\mathcal{A}(\chi_{0}\chi_{0}^{\prime})| \ll A$. Thus,
\begin{align}
&\frac{4}{|\mathcal{C}|}\sum_{\substack{p,q \le x \\ p \ne q}} \frac{1}{(p-1)(q-1)} \sum_{\substack{s,t \in \mathbb{F}_{p}^{\times} \\ s^{\prime},t^{\prime} \in \mathbb{F}_{q}^{\times}}} \sum_{\substack{d|i_{E_{s,t}}(p) \\ d^{\prime}|i_{E_{s^{\prime},t^{\prime}}}(q)}} g(d)g(d^{\prime}) \frac{1}{4(p-1)(q-1)}\sum_{\substack{\chi_{2} \ne \chi_{0},~ \chi_2^\prime\neq \chi_0^\prime \\ \chi_{2}^{6}=\chi_{0},~ (\chi_{2}^\prime)^{6}=\chi_{0}^\prime}} \chi_{2}(t)\chi_2^\prime(t^\prime)\mathcal{A}(\overline{\chi_{0}\chi_{0}^{\prime}})\mathcal{B}(\overline{\chi_{2}\chi_{2}^{\prime}})\nonumber\\ 
&\qquad \qquad \ll \frac{1}{B} \sum_{\substack{p,q \le x \\ p \ne q}} \frac{1}{p^{2}q^{2}}\sum_{\substack{d|p-1 \\ d \le \sqrt{p}+1 \\ d^{\prime}|q-1 \\ d \le \sqrt{q}+1}}|g(d)| \cdot |g(d^{\prime})|\sum_{\substack{s,t \in \mathbb{F}_{p}^{\times} \\ E_{s,t}(\mathbb{F}_{p})[d] \cong (\mathbb{Z}/d\mathbb{Z})^{2} \\ s^{\prime},t^{\prime} \in \mathbb{F}_{p}^{\times} \\ E_{s^{\prime},t^{\prime}}(\mathbb{F}_{q})[d^{\prime}] \cong (\mathbb{Z}/d^{\prime}\mathbb{Z})^{2}}}\sum_{\substack{\chi_{2} \ne \chi_{0},~ \chi_2^\prime\neq \chi_0^\prime \\ \chi_{2}^{6}=\chi_{0},~ (\chi_{2}^\prime)^{6}=\chi_{0}^\prime}}|\mathcal{B}(\overline{\chi_{2}\chi_{2}^{\prime}})| \nonumber \\
&\qquad \qquad \ll \frac{1}{B} \sum_{d \le \sqrt{x}+1} |g(d)| \sum_{d^{\prime} \le \sqrt{x}+1} |g(d^{\prime})| \sum_{\substack{p \le x \\ p \equiv 1 \bmod{d}}} \frac{1}{p^{2}} \sum_{\substack{q \le x \\ q \equiv 1 \bmod{d^{\prime}}}} \frac{1}{q^{2}} \sum_{\substack{\chi_{2} \ne \chi_{0},~ \chi_2^\prime\neq \chi_0^\prime \\ \chi_{2}^{6}=\chi_{0},~ (\chi_{2}^\prime)^{6}=\chi_{0}^\prime}} |\mathcal{B}(\overline{\chi_{2}\chi_{2}^{\prime}})| \nonumber \\
\label{sigma}
&\qquad \qquad \qquad \times \left(\frac{p(p-1)}{d\psi(d)\varphi(d)}+O(p^{3/2})\right)\left(\frac{q(q-1)}{d^{\prime}\psi(d^{\prime})\varphi(d^{\prime})}+O(q^{3/2})\right) \\
&\qquad \qquad =\sigma_1+\sigma_2+\sigma_3+\sigma_4, \nonumber
\end{align}
where $\sigma_1$ is the sum corresponding to the product of the main terms in \eqref{sigma}, $\sigma_4$ corresponds to the product of error terms in \eqref{sigma}, and $\sigma_2$ and $\sigma_3$ correspond to the mixed terms.
We will evaluate each of these summations separately.  For the first summation
we have
\begin{equation}\label{eq:4.32}
\sigma_{1} = \frac{1}{B}\sum_{d \le \sqrt{x}+1} \frac{|g(d)|}{d\psi(d)\varphi(d)} \sum_{d^\prime \le \sqrt{x}+1} \frac{|g(d^\prime)|}{d^\prime\psi(d^\prime)\varphi(d^\prime)}\sum_{\substack{p, q \le x \\ p \equiv 1 \bmod{d}\\ q \equiv 1 \bmod{d^\prime}}}\sum_{\substack{\chi_{2} \ne \chi_{0},~ \chi_2^\prime\neq \chi_0^\prime \\ \chi_{2}^{6}=\chi_{0},~ (\chi_{2}^\prime)^{6}=\chi_{0}^\prime}} |\mathcal{B}(\overline{\chi_{2}\chi_{2}^{\prime}})|  .
\end{equation}
Let $k=[4\log x/\log B]+1$.  By H\"{o}lder's inequality, we have
\begin{align}\label{eq:4.42}
\sum_{\substack{p, q \le x \\ p \equiv 1 \bmod{d}\\ q \equiv 1 \bmod{d^\prime}}} \sum_{\substack{\chi_{2} \ne \chi_{0},~ \chi_2^\prime\neq \chi_0^\prime \\ \chi_{2}^{6}=\chi_{0},~ (\chi_{2}^\prime)^{6}=\chi_{0}^\prime}} |\mathcal{B}(\overline{\chi_{2}\chi_2^\prime})| &\le \left(\sum_{\substack{p, q \le x \\ p \equiv 1 \bmod{d}\\ q \equiv 1 \bmod{d^\prime}}} \sum_{\substack{\chi_{2} \ne \chi_{0},~ \chi_2^\prime\neq \chi_0^\prime \\ \chi_{2}^{6}=\chi_{0},~ (\chi_{2}^\prime)^{6}=\chi_{0}^\prime}}1\right)^{1-\frac{1}{2k}}\left(\sum_{\substack{p, q \le x \\ p \equiv 1 \bmod{d}\\ q \equiv 1 \bmod{d^\prime}}}\sum_{\substack{\chi_{2} \ne \chi_{0},~ \chi_2^\prime\neq \chi_0^\prime \\ \chi_{2}^{6}=\chi_{0},~ (\chi_{2}^\prime)^{6}=\chi_{0}^\prime}} \left|\sum_{b \le B} \chi_{2}\chi_2^\prime(b)\right|^{2k}\right)^{\frac{1}{2k}} \notag \\
&\ll (\pi(x;d,1)\pi(x;d^\prime, 1))^{1-\frac{1}{2k}}\left(\sum_{p, q \le x}\sum_{\substack{\chi_2 \ne \chi_{0}, \chi_2^\prime\neq \chi_0^\prime}} \left|\sum_{b \le B^{k}} \tau_{k, B}(b)\chi_{2}\chi_2^\prime(b)\right|^{2}\right)^{\frac{1}{2k}},
\end{align}
where $\tau_{k, B}(n):=\#\left\{(a_{1},a_{2},\dotsc,a_{k}) \in [1,B]^{k} \cap \mathbb{N}^{k}:n=a_{1}a_{2}\dotsm a_{k}\right\}$.  By Lemma \ref{largesieve}, we have
\begin{equation}\label{eq:4.52}
\sum_{p, q \le x}\sum_{\substack{\chi \ne \chi_{0}}}^{\ast} \left|\sum_{b \le B^{k}} \tau_{k, B}(b)\chi(b)\right|^{2} \ll (x^{4}+B^{k})\sum_{b \le B^{k}} \tau_{k, B}(b)^{2}.
\end{equation}
Suppose $k=1$.  That is, $B > x^{4}$.  Then, we obtain
\begin{equation*}
\sum_{p,q \le x}\sum_{\substack{\chi_2 \ne \chi_{0}\\ \chi_2^\prime \ne \chi_0^\prime}} \left|\sum_{b \le B^{k}} \tau_{1,B}(b)\chi_{2}\chi_2^\prime(b)\right|^{2} \ll B^{2}.
\end{equation*}
Therefore by employing Lemma \ref{phi} (i) in  \eqref{eq:4.42}, we have
\begin{equation*}
\sum_{\substack{p, q \le x \\ p \equiv 1 \bmod{d}\\ q \equiv 1 \bmod{d^\prime}}} \sum_{\substack{\chi_{2} \ne \chi_{0},~ \chi_2^\prime\neq \chi_0^\prime \\ \chi_{2}^{6}=\chi_{0},~ (\chi_{2}^\prime)^{6}=\chi_{0}^\prime}} |\mathcal{B}(\overline{\chi_{2}\chi_2^\prime})| \ll B\frac{x}{\varphi(d)^{1/2}\varphi(d^\prime)^{1/2}(\log x)}.
\end{equation*}
Substituting this into Equation (\ref{eq:4.32}), we obtain
\begin{equation*}
\sigma_{1} \ll \frac{x}{\log x}\sum_{d \le x} \frac{|g(d)|}{d\psi(d)\varphi(d)^{3/2}} \sum_{d^\prime \le x} \frac{|g(d^\prime)|}{d^\prime\psi(d^\prime)\varphi(d^\prime)^{3/2}}\ll \frac{x}{\log x},
\end{equation*}
as $\beta < 3/4$. The latter summations were previously determined to be constants.\\
\indent Now suppose $k=[4\log{x}/\log{B}]+1 > 1$.  Then $B\leq x^4$ and $x^4 < B^k \leq Bx^4 \leq x^8$. 
Then, by Lemma \ref{Stephens} (i) and (iii), (\ref{eq:4.42}), (\ref{eq:4.52}), and the trivial bounds for $\pi(x;d,1)$ and $\pi(x;d^\prime, 1)$, we have
\begin{align}\label{eq:4.62}
\sum_{\substack{p \le x \\ p \equiv 1 \bmod{d}\\ q \equiv 1 \bmod{d^\prime}}} \sum_{\substack{\chi_{2} \ne \chi_{0},~ \chi_2^\prime\neq \chi_0^\prime \\ \chi_{2}^{6}=\chi_{0},~ (\chi_{2}^\prime)^{6}=\chi_{0}^\prime}} 
|\mathcal{B}(\overline{\chi_{2}\chi_{2}^{\prime}})| &\ll \left(\frac{x^2}{d d^\prime}\right)^{1-\frac{1}{2k}}\left((x^{4}+B^{k})B^{k}(\Psi(B,9\log x))^{k}\right)^{\frac{1}{2k}} \notag\\
&\ll B\frac{x^2}{(d d^\prime)^{3/4}} x^{-\frac{1}{k}}(\Psi(B,9\log x))^{1/2} \notag\\
&\ll B\frac{x^2}{(d d^\prime)^{3/4}}\exp\left(-c_{3}\frac{(\log x)^{1/2}}{\log \log x}\right),
\end{align}
where $c_{3}>0$ if $c_1$ is a suitable large constant. Substituting (\ref{eq:4.62}) into (\ref{eq:4.32}), we obtain
\begin{equation*}
\sigma_{1} \ll x^2\exp\left(-c_{3}\frac{(\log x)^{1/2}}{\log \log x}\right)\sum_{d \le x} \frac{|g(d)|}{d^{7/4}\psi(d)\varphi(d)} \sum_{d^\prime \le x} \frac{|g(d^\prime)|}{(d^\prime)^{7/4}\psi(d^\prime)\varphi(d^\prime)}\ll x^2\exp\left(-c_{3}\frac{(\log x)^{1/2}}{\log \log x}\right),
\end{equation*}
as $\beta<3/4$.

\indent By Lemma \ref{Burgess} (ii), for any $r \in \mathbb{N}$ and $\varepsilon > 0$, we have that our second summation $\sigma_2$ is bounded by
\begin{align*}
&\ll \frac{1}{B} \sum_{d \le \sqrt{x}+1} \frac{|g(d)|}{d\psi(d)\varphi(d)} \sum_{d^{\prime} \le \sqrt{x}+1} |g(d^{\prime})| \sum_{\substack{p \le x \\ p \equiv 1 \bmod{d}}} \sum_{\substack{q \le x \\ q \equiv 1 \bmod{d^{\prime}}}} \frac{1}{q^{1/2}}\sum_{\substack{\chi_{2} \ne \chi_{0}, ~\chi_2^\prime\ne \chi_0^\prime \\ \chi_{2}^{6}=\chi_{0}, ~(\chi_{2}^\prime)^{6}=\chi_{0}^\prime }} \left|\sum_{b \le B} \chi_{2}\chi_{2}^{\prime}(b)\right| \\
&\ll_{r,\varepsilon} \frac{1}{B}\sum_{d \le \sqrt{x}+1} \frac{|g(d)|}{d\psi(d)\varphi(d)} \sum_{d^{\prime} \le \sqrt{x}+1} |g(d^{\prime})| \sum_{\substack{p \le x \\ p \equiv 1 \bmod{d}}} \sum_{\substack{q \le x \\ q \equiv 1 \bmod{d^{\prime}}}} \frac{1}{q^{1/2}}\sum_{\substack{\chi_{2} \ne \chi_{0}, ~\chi_2^\prime\ne \chi_0^\prime \\ \chi_{2}^{6}=\chi_{0}, ~(\chi_{2}^\prime)^{6}=\chi_{0}^\prime }} B^{1-\frac{1}{r}}(pq)^{\frac{r+1}{4r^{2}}+\varepsilon} \\
&\ll \frac{x^{1+\frac{r+1}{4r^{2}}+\varepsilon}}{{B^{1/r}}\log x} \sum_{d \le \sqrt{x}+1} \frac{|g(d)|}{d\psi(d)\varphi(d)^{2}} \sum_{d^{\prime} \le \sqrt{x}+1} |g(d^{\prime})| \sum_{\substack{q \le x \\ q \equiv 1 \bmod{d^{\prime}}}} q^{\frac{-2r^{2}+r+1}{4r^{2}}+\varepsilon} \\
&\ll \frac{x^{\frac{3}{2}+\frac{r+1}{2r^{2}}+2\varepsilon}(\log\log{x})}{{B^{1/r}}(\log x)^2}\sum_{d^{\prime} \le \sqrt{x}+1} \frac{|g(d^{\prime})|}{d^{\prime}} \\
&\ll \frac{1}{B^{1/r}}x^{\frac{3+\beta}{2}+\frac{r+1}{2r^{2}}+2\varepsilon}(\log x)^{\gamma-1}\log\log{x}.
\end{align*}
In the above estimations we employed Lemma \ref{phi} (v) and the fact that $\beta < 3/4$.

\indent We obtain a similar bound for $\sigma_3$.\\
\indent Finally, by Lemma \ref{Burgess} (ii) and Lemma \ref{phi} (v), for any $r \in \mathbb{N}$ and $\varepsilon > 0$, we have that our fourth summation $\sigma_4$ is bounded by
\begin{align*}
&\ll \frac{1}{B} \sum_{d \le \sqrt{x}+1} |g(d)| \sum_{d^{\prime} \le \sqrt{x}+1} |g(d^{\prime})| \sum_{\substack{p \le x \\ p \equiv 1 \bmod{d}}} \frac{1}{p^{1/2}}\sum_{\substack{q \le x \\ q \equiv 1 \bmod{d^{\prime}}}} \frac{1}{q^{1/2}}\sum_{\substack{\chi_{2} \ne \chi_{0}, ~\chi_2^\prime\ne \chi_0^\prime \\ \chi_{2}^{6}=\chi_{0}, ~(\chi_{2}^\prime)^{6}=\chi_{0}^\prime }}  \left|\sum_{b \le B} \chi_{2}\chi_{0}^{\prime}(b)\right| \\
&\ll \frac{1}{B^{1/r}} \sum_{d \le \sqrt{x}+1} |g(d)| \sum_{d^{\prime} \le \sqrt{x}+1} |g(d^{\prime})| \sum_{\substack{p \le x \\ p \equiv 1 \bmod{d}}} \sum_{\substack{q \le x \\ q \equiv 1 \bmod{d^{\prime}}}} (pq)^{\frac{-2r^{2}+r+1}{4r^{2}}+\varepsilon} \\
&\ll \frac{x^{1+\frac{r+1}{2r^{2}}+2\varepsilon}(\log \log x)^{2}}{B^{1/r}(\log x)^{2}}\sum_{d \le \sqrt{x}+1} \frac{|g(d)|}{d} \sum_{d^{\prime} \le \sqrt{x}+1} \frac{|g(d^{\prime})|}{d^{\prime}} \\
&\ll \frac{1}{B^{1/r}} x^{1+\beta+\frac{r+1}{2r^{2}}+2\varepsilon}(\log x)^{2\gamma}(\log \log x)^{2}.
\end{align*}
Adding the above bounds for $\sigma_1$, $\sigma_2$, $\sigma_3$, and $\sigma_4$ concludes Subcase 1 of Case 2.\\
\indent \underline{\textbf{Subcase 2}}:  Either both $\chi_1$ and $\chi_2$ are principal or both $\chi_{1}^{\prime}$ and $\chi_2^\prime$ are principal. Without loss of generality we assume that $\chi_{1}^{\prime}=\chi_{0}^{\prime}$ and $\chi_{2}^{\prime}=\chi_{0}^{\prime}$.\\
\indent We have
\begin{align}
&\frac{4}{|\mathcal{C}|} \sum_{\substack{p,q \le x \\ p \ne q}} \frac{1}{(p-1)(q-1)}\sum_{\substack{s,t \in \mathbb{F}_{p}^{\times} \\ s^{\prime},t^{\prime} \in \mathbb{F}_{q}^{\times}}} \sum_{\substack{d|i_{E_{s,t}}(p) \\ d^{\prime}|i_{E_{s^{\prime},t^{\prime}}}(q)}} g(d)g(d^{\prime}) \frac{1}{4(p-1)(q-1)} \sum_{\substack{\chi_{1} \ne \chi_{0} \\ \chi_{2} \ne \chi_{0} \\ \chi_{1}^{4}\chi_{2}^{6}=\chi_{0}}} \chi_{1}(s)\chi_{2}(t)\mathcal{A}(\overline{\chi_{1}\chi_{0}^{\prime}})\mathcal{B}(\overline{\chi_{2}\chi_{0}^{\prime}})\nonumber\\ \label{original}
&\qquad = \frac{1}{|\mathcal{C}|} \sum_{d \le \sqrt{x}+1} g(d) \sum_{d^{\prime} \le \sqrt{x}+1} g(d^{\prime}) \sum_{\substack{p,q \le x \\ p \ne q \\ p \equiv 1 \bmod{d} \\ q \equiv 1 \bmod{d^{\prime}}}} \frac{1}{(p-1)^{2}(q-1)^{2}} \sum_{\substack{\chi_{1} \ne \chi_{0} \\ \chi_{2} \ne \chi_{0} \\ \chi_{1}^{4}\chi_{2}^{6}=\chi_{0}}} \mathcal{A}(\overline{\chi_{1}\chi_{0}^{\prime}})\mathcal{B}(\overline{\chi_{2}\chi_{0}^{\prime}}) \mathcal{W}_{p,q}(\chi_{1},\chi_{2}),
\end{align}
where
\begin{equation*}
\mathcal{W}_{p,q}(\chi_{1},\chi_{2}):=\sum_{\substack{s,t \in \mathbb{F}_{p}^{\times} \\ E_{s,t}(\mathbb{F}_{p})[d] \cong (\mathbb{Z}/d\mathbb{Z})^{2}}} \quad \sum_{\substack{s^{\prime},t^{\prime} \in \mathbb{F}_{q}^{\times} \\ E_{s^{\prime},t^{\prime}}(\mathbb{F}_{q})[d^{\prime}] \cong (\mathbb{Z}/d^{\prime}\mathbb{Z})^{2}}} \chi_{1}(s)\chi_{2}(t).
\end{equation*}
By applying the Cauchy-Schwarz inequality twice, we obtain
\begin{equation*}
\left|\sum_{\substack{\chi_{1} \ne \chi_{0} \\ \chi_{2} \ne \chi_{0} \\ \chi_{1}^{4}\chi_{2}^{6}=\chi_{0}}} \mathcal{A}(\overline{\chi_{1}\chi_{0}^{\prime}})\mathcal{B}(\overline{\chi_{2}\chi_{0}^{\prime}}) \mathcal{W}_{p,q}(\chi_{1},\chi_{2})\right|^{4} \le \left(\sum_{\substack{\chi_{1} \ne \chi_{0} \\ \chi_{2} \ne \chi_{0} \\ \chi_{1}^{4}\chi_{2}^{6}=\chi_{0}}} |\mathcal{W}_{p,q}(\chi_{1},\chi_{2})|^{2}\right)^{2}\left(\sum_{\substack{\chi_{1} \ne \chi_{0} \\ \chi_{2} \ne \chi_{0} \\ \chi_{1}^{4}\chi_{2}^{6}=\chi_{0}}} |\mathcal{A}(\overline{\chi_{1}\chi_{0}^{\prime}})|^{4}\right)\left(\sum_{\substack{\chi_{1} \ne \chi_{0} \\ \chi_{2} \ne \chi_{0} \\ \chi_{1}^{4}\chi_{2}^{6}=\chi_{0}}} |\mathcal{B}(\overline{\chi_{2}\chi_{0}^{\prime}})|^{4}\right).
\end{equation*}
From Lemma \ref{FI} we have
\begin{equation*}
\sum_{\substack{\chi_{1} \ne \chi_{0} \\ \chi_{2} \ne \chi_{0} \\ \chi_{1}^{4}\chi_{2}^{6}=\chi_{0}}} |\mathcal{A}(\overline{\chi_{1}\chi_{0}^{\prime}})|^{4} \ll A^{2}pq(\log pq)^{6}
\end{equation*}
and
\begin{equation*}
\sum_{\substack{\chi_{1} \ne \chi_{0} \\ \chi_{2} \ne \chi_{0} \\ \chi_{1}^{4}\chi_{2}^{6}=\chi_{0}}} |\mathcal{B}(\overline{\chi_{2}\chi_{0}^{\prime}})|^{4} \ll B^{2}pq(\log pq)^{6}.
\end{equation*}
We have
\begin{align*}
\sum_{\substack{\chi_{1} \ne \chi_{0} \\ \chi_{2} \ne \chi_{0} \\ \chi_{1}^{4}\chi_{2}^{6}=\chi_{0}}} |\mathcal{W}_{p,q}(\chi_{1},\chi_{2})|^{2} &\le \sum_{\chi_{1},\chi_{2}} \mathcal{W}_{p,q}(\chi_{1},\chi_{2})\overline{\mathcal{W}_{p,q}(\chi_{1},\chi_{2})} \\
&=\sum_{\chi_{1},\chi_{2}} \sum_{\substack{s,t \in \mathbb{F}_{p}^{\times} \\ s^{\prime},t^{\prime} \in \mathbb{F}_{q}^{\times} \\ E_{s,t}(\mathbb{F}_{p})[d] \cong (\mathbb{Z}/d\mathbb{Z})^{2} \\ E_{s^{\prime},t^{\prime}}(\mathbb{F}_{q})[d^{\prime}] \cong (\mathbb{Z}/d^{\prime}\mathbb{Z})^{2}}}\chi_{1}(s)\chi_{2}(t)\sum_{\substack{u,v \in \mathbb{F}_{p}^{\times} \\ u^{\prime},v^{\prime} \in \mathbb{F}_{q}^{\times} \\ E_{u,v}(\mathbb{F}_{p})[d] \cong (\mathbb{Z}/d\mathbb{Z})^{2} \\ E_{u^{\prime},v^{\prime}}(\mathbb{F}_{q})[d^{\prime}] \cong (\mathbb{Z}/d^{\prime}\mathbb{Z})^{2}}}\overline{\chi_{1}(u)}\,\overline{\chi_{2}(v)} \\
&=\sum_{\substack{s,t \in \mathbb{F}_{p}^{\times} \\ s^{\prime},t^{\prime} \in \mathbb{F}_{q}^{\times} \\ E_{s,t}(\mathbb{F}_{p})[d] \cong (\mathbb{Z}/d\mathbb{Z})^{2} \\ E_{s^{\prime},t^{\prime}}(\mathbb{F}_{q})[d^{\prime}] \cong (\mathbb{Z}/d^{\prime}\mathbb{Z})^{2}}}\sum_{\substack{u,v \in \mathbb{F}_{p}^{\times} \\ u^{\prime},v^{\prime} \in \mathbb{F}_{q}^{\times} \\ E_{u,v}(\mathbb{F}_{p})[d] \cong (\mathbb{Z}/d\mathbb{Z})^{2} \\ E_{u^{\prime},v^{\prime}}(\mathbb{F}_{q})[d^{\prime}] \cong (\mathbb{Z}/d^{\prime}\mathbb{Z})^{2}}} \sum_{\chi_{1}} \chi_{1}(s)\overline{\chi_{1}(u)} \sum_{\chi_{2}} \chi_{2}(t)\overline{\chi_{2}(v)} \\
&=\sum_{\substack{s,t \in \mathbb{F}_{p}^{\times} \\ s^{\prime},t^{\prime},u^{\prime},v^{\prime} \in \mathbb{F}_{q}^{\times} \\ E_{s,t}(\mathbb{F}_{p})[d] \cong (\mathbb{Z}/d\mathbb{Z})^{2} \\ E_{s^{\prime},t^{\prime}}(\mathbb{F}_{q})[d^{\prime}] \cong (\mathbb{Z}/d^{\prime}\mathbb{Z})^{2} \\ E_{u^{\prime},v^{\prime}}(\mathbb{F}_{q})[d^{\prime}] \cong (\mathbb{Z}/d^{\prime}\mathbb{Z})^{2}}} (p-1)(q-1) \ll pq\left(\frac{p^{2}}{d\psi(d)\varphi(d)}+p^{3/2}\right)\left(\frac{q^{4}}{(d^{\prime}\psi(d^{\prime})\varphi(d^{\prime}))^{2}}+q^{3}\right)\\
& \ll \frac{p^{3}q^{5}}{d(d^{\prime})^{2}\psi(d)\psi(d^{\prime})^{2}\varphi(d)\varphi(d^{\prime})^{2}}+\frac{p^{3}q^{4}}{d\psi(d)\varphi(d)}+\frac{p^{5/2}q^{5}}{(d^{\prime}\psi(d^{\prime})\varphi(d^{\prime}))^{2}}+p^{5/2}q^{4},
\end{align*}
which implies
\begin{align}
&\left|\sum_{\substack{\chi_{1} \ne \chi_{0} \\ \chi_{2} \ne \chi_{0} \\ \chi_{1}^{4}\chi_{2}^{6}=\chi_{0}}}  \mathcal{A}(\overline{\chi_{1}\chi_{0}^{\prime}})\mathcal{B}(\overline{\chi_{2}\chi_{0}^{\prime}}) \mathcal{W}_{p,q}(\chi_{1},\chi_{2})\right| \nonumber\\ \label{terms}
&\qquad \qquad \ll \sqrt{AB}(\log pq)^{3}\left(\frac{p^{2}q^{3}}{(d\psi(d)\varphi(d))^{1/2}d^{\prime}\psi(d^{\prime})\varphi(d^{\prime})}+\frac{p^{2}q^{5/2}}{(d\psi(d)\varphi(d))^{1/2}}+\frac{p^{7/4}q^{3}}{d^{\prime}\psi(d^{\prime})\varphi(d^{\prime})}+p^{7/4}q^{5/2}\right).
\end{align}
In the above inequalities, we have used the facts that $(a+b+c+d)^{2} \ll a^{2}+b^{2}+c^{2}+d^{2}$ and $(a+b+c+d)^{1/4} \ll a^{1/4}+b^{1/4}+c^{1/4}+d^{1/4}$, where the implied constants are absolute.\\
\indent Substituting the first term in  \eqref{terms} into the original summation in \eqref{original}, we obtain 
\begin{align}
& \frac{1}{\sqrt{AB}} \sum_{d \le \sqrt{x}+1} \frac{|g(d)|}{d^{1/2}\psi(d)^{1/2}\varphi(d)^{1/2}}\sum_{d^{\prime} \le \sqrt{x}+1} \frac{|g(d^{\prime})|}{d^{\prime}\psi(d^{\prime})\varphi(d^{\prime})} \sum_{\substack{p, q \le x \\ p \equiv 1 \bmod{d} \\ q \equiv 1 \bmod{d^{\prime}}}} q(\log pq)^{3} \nonumber\\
&\ll \frac{1}{\sqrt{AB}} x^{3}(\log x) \sum_{d \le \sqrt{x}+1} \frac{|g(d)|}{d^{1/2}\psi(d)^{1/2}\varphi(d)^{3/2}}\sum_{d^{\prime} \le \sqrt{x}+1} \frac{|g(d^{\prime})|}{(d^{\prime})^{}\psi(d^{\prime})\varphi(d^{\prime})^2} \nonumber\\
\label{first}
&\ll \frac{1}{\sqrt{AB}} x^{3}(\log x),
\end{align}
as $\beta < 3/4$.\\
\indent Similarly by substituting the second, third, and fourth terms in \eqref{terms} into the original summation in \eqref{original}, we obtain 
\begin{equation}
\label{second}
\frac{1}{\sqrt{AB}} \left(x^{(5+\beta)/2}(\log x)^{\gamma+2}(\log \log x)+ x^{(11+2\beta)/4}(\log x)^{\gamma+2}(\log \log x)+ x^{(9+4\beta)/4}(\log x)^{2\gamma+3}(\log \log x)^{2}\right).
\end{equation}
Adding \eqref{first} to \eqref{second} concludes Subcase 2 of Case 2.\\
\indent \underline{\textbf{Case 3}}:  Exactly three of $\chi_1$, $\chi_2$, $\chi_1^\prime$, and $\chi_2^\prime$ are principal. In this case by following the method of Subcase 1 of Case 2 we can conclude that the sum in question is bounded by the same bound in Subcase 1 of Case 2.\\
\indent \underline{\textbf{Case 4}}: Exactly one of $\chi_{1},\chi_{2},\chi_{1}^{\prime},\chi_{2}^{\prime}$ is principal.  
In this case by following the method of Subcase 2 of Case 2 we can conclude that the sum in question is bounded by the same bound in Subcase 2 of Case 2.

\indent \underline{\textbf{Case 5}}: All four of $\chi_{1},\chi_{2},\chi_{1}^{\prime},\chi_{2}^{\prime}$ are non-principal. In this case by following the method of Subcase 2 of Case 2 we can conclude that the sum in question is bounded by the same bound in Subcase 2 of Case 2. 

\end{proof}

\section{PROOF OF THEOREM \ref{Theorem 2}}
\begin{proof}
We will evaluate the following summation:
\begin{align}
\label{main-identity}
&\frac{1}{|\mathcal{C}|} \sum_{E \in \mathcal{C}} \left(\sum_{p \le x} f(i_{E}(p))-c_{0}(f)\li(x)\right)^{2}\nonumber \\
&\qquad \qquad=\frac{1}{|\mathcal{C}|}\sum_{E \in \mathcal{C}} \left(\sum_{\substack{p,q \le x \\ p \ne q}} f(i_{E}(p))f(i_{E}(q))+\sum_{p \le x} f(i_{E}(p))^{2}-2c_{0}(f)\li(x)\sum_{p \le x}f(i_{E}(p))+c_{0}(f)^{2}\li(x)^{2}\right).
\end{align}
For the first summation in \eqref{main-identity} we have 
\begin{align}
&\frac{1}{|\mathcal{C}|} \sum_{E \in \mathcal{C}} \sum_{\substack{p,q \le x \\ p \ne q}} f(i_{E}(p))f(i_{E}(q)) \nonumber\\
&\qquad \qquad= \frac{4}{|\mathcal{C}|}\sum_{\substack{p,q \le x \\ p \ne q}} \frac{1}{(p-1)(q-1)}\sum_{\substack{s,t \in \mathbb{F}_{p}^{\times} \\ s^{\prime},t^{\prime} \in \mathbb{F}_{q}^{\times}}} \sum_{\substack{d|i_{E_{s,t}}(p) \\ d^{\prime}|i_{E_{s^{\prime},t^{\prime}}}(q)}} g(d)g(d^{\prime})\sum_{\substack{|a| \le A, |b| \le B: \\ \exists 1 \le u < p, 1 \le u^{\prime} < q \\ a \equiv su^{4} \bmod{p}, a \equiv s^{\prime}(u^{\prime})^{4} \bmod{q} \\ b \equiv tu^{6} \bmod{p}, b \equiv t^{\prime}(u^{\prime})^{6} \bmod{q}}}1\nonumber\\
&\qquad \qquad \qquad +\frac{1}{|\mathcal{C}|}\sum_{\substack{p,q \le x \\ p \ne q}} \frac{|\aut_{\mathbb{F}_{p}}(E_{s,t})|\cdot|\aut_{\mathbb{F}_{q}}(E_{s^{\prime},t^{\prime}})|}{(p-1)(q-1)}\sum_{\substack{s,t \in \mathbb{F}_{p} \\ st=0 \\ s^{\prime},t^{\prime} \in \mathbb{F}_{q}^{\times}}} \sum_{\substack{d|i_{E_{s,t}}(p) \\ d^{\prime}|i_{E_{s^{\prime},t^{\prime}}}(q)}} g(d)g(d^{\prime})\sum_{\substack{|a| \le A, |b| \le B: \\ \exists 1 \le u < p, 1 \le u^{\prime} < q \\ a \equiv su^{4} \bmod{p}, a \equiv s^{\prime}(u^{\prime})^{4} \bmod{q} \\ b \equiv tu^{6} \bmod{p}, b \equiv t^{\prime}(u^{\prime})^{6} \bmod{q}}}1 \nonumber \\
&\qquad \qquad \qquad +\frac{1}{|\mathcal{C}|}\sum_{\substack{p,q \le x \\ p \ne q}} \frac{|\aut_{\mathbb{F}_{p}}(E_{s,t})|\cdot|\aut_{\mathbb{F}_{q}}(E_{s^{\prime},t^{\prime}})|}{(p-1)(q-1)}\sum_{\substack{s,t \in \mathbb{F}_{p}^{\times} \\ s^{\prime},t^{\prime} \in \mathbb{F}_{q} \\ s^{\prime}t^{\prime}=0}} \sum_{\substack{d|i_{E_{s,t}}(p) \\ d^{\prime}|i_{E_{s^{\prime},t^{\prime}}}(q)}} g(d)g(d^{\prime})\sum_{\substack{|a| \le A, |b| \le B: \\ \exists 1 \le u < p, 1 \le u^{\prime} < q \\ a \equiv su^{4} \bmod{p}, a \equiv s^{\prime}(u^{\prime})^{4} \bmod{q} \\ b \equiv tu^{6} \bmod{p}, b \equiv t^{\prime}(u^{\prime})^{6} \bmod{q}}}1 \nonumber\\
&\qquad \qquad \qquad +\frac{1}{|\mathcal{C}|}\sum_{\substack{p,q \le x \\ p \ne q}} \frac{|\aut_{\mathbb{F}_{p}}(E_{s,t})|\cdot|\aut_{\mathbb{F}_{q}}(E_{s^{\prime},t^{\prime}})|}{(p-1)(q-1)}\sum_{\substack{s,t \in \mathbb{F}_{p} \\ st=0 \\ s^{\prime},t^{\prime} \in \mathbb{F}_{q} \\ s^{\prime}t^{\prime}=0}} \sum_{\substack{d|i_{E_{s,t}}(p) \\ d^{\prime}|i_{E_{s^{\prime},t^{\prime}}}(q)}} g(d)g(d^{\prime})\sum_{\substack{|a| \le A, |b| \le B: \\ \exists 1 \le u < p, 1 \le u^{\prime} < q \\ a \equiv su^{4} \bmod{p}, a \equiv s^{\prime}(u^{\prime})^{4} \bmod{q} \\ b \equiv tu^{6} \bmod{p}, b \equiv t^{\prime}(u^{\prime})^{6} \bmod{q}}}1\nonumber\\
\label{sefr}
&\qquad \qquad = S_{1}+S_{2}+S_{3}+S_{4}.
\end{align}
Let $S$ be the corresponding bound in Lemma \ref{lemma:3} to a function $g(n)$ satisfying $$\sum_{n\leq x} |g(n)| \ll x^{1+\beta} (\log{x})^{\gamma+1}.$$

We have 
\begin{eqnarray}
S_{1} &=& O(S)+ \frac{4AB}{|\mathcal{C}|}\sum_{\substack{p,q \le x \\ p \ne q}} \frac{1}{p(p-1)q(q-1)}\sum_{\substack{s,t \in \mathbb{F}_{p}^{\times} \\ s^{\prime},t^{\prime} \in \mathbb{F}_{q}^{\times}}} f(i_{E_{s,t}}(p))f(i_{E_{s^\prime, t^\prime}}(q))\nonumber\\
\label{yek}
&=& O(S)+ \frac{4AB}{|\mathcal{C}|} \left (\left ( \sum_{p\leq x} \frac{1}{p(p-1)}  \sum_{\substack{s,t \in \mathbb{F}_{p}^{\times} }} f(i_{E_{s,t}}(p)) \right)^2-\sum_{p\leq x} \frac{1}{p^2(p-1)^2} \left( \sum_{\substack{s,t \in \mathbb{F}_{p}^{\times} }} f(i_{E_{s,t}}(p)) \right)^2\right).
\end{eqnarray}
From the calculation of the main term in Section \ref{themainterm} we have  
\begin{equation}
\label{do}
\sum_{p\leq x} \frac{1}{p(p-1)}  \sum_{\substack{s,t \in \mathbb{F}_{p}^{\times} }} f(i_{E_{s,t}}(p))=c_0(f) {\rm li}(x)+O\left( \frac{x}{(\log{x})^{c^\prime}}\right)
\end{equation}
for any $c^\prime >1$. Since $i_{E_{s, t}}(p) \leq \sqrt{p}+1$ and $f(n)\ll n^\beta (\log{n})^\gamma$, we have 
\begin{equation}
\label{se}
\sum_{p\leq x} \frac{1}{p^2(p-1)^2} \left( \sum_{\substack{s,t \in \mathbb{F}_{p}^{\times} }} f(i_{E_{s,t}}(p)) \right)^2 \ll x^{1+\beta} (\log{x})^{2\gamma-1}.
\end{equation}
As $\beta<3/4$, applying \eqref{yek} and \eqref{do} in \eqref{se}  yields
\begin{equation}
\label{s1}
S_1=c_0(f)^2 {\rm li}(x)^2+O(S)+O\left( \frac{x^2}{(\log{x})^{2 c^\prime}}\right)
\end{equation}
for any $c^\prime >1$.

We will next bound $S_{2}$ (a similar argument will bound $S_{3}$).  
We have
\begin{align}
S_{2} &\ll \frac{1}{|\mathcal{C}|}\sum_{\substack{p,q \le x \\ p \ne q}} \frac{|\aut_{\mathbb{F}_{p}}(E_{s,t})|\cdot|\aut_{\mathbb{F}_{q}}(E_{s^{\prime},t^{\prime}})|}{(p-1)(q-1)}\sum_{\substack{s,t \in \mathbb{F}_{p} \\ st=0 \\ s^{\prime},t^{\prime} \in \mathbb{F}_{q}^{\times}}} \sum_{\substack{d|i_{E_{s,t}}(p) \\ d^{\prime}|i_{E_{s^{\prime},t^{\prime}}}(q)}} |g(d)|\,|g(d^{\prime})|\sum_{\substack{|a| \le A, |b| \le B: \\ \exists 1 \le u < p, 1 \le u^{\prime} < q \\ a \equiv su^{4} \bmod{p}, a \equiv s^{\prime}(u^{\prime})^{4} \bmod{q} \\ b \equiv tu^{6} \bmod{p}, b \equiv t^{\prime}(u^{\prime})^{6} \bmod{q}}}1 \nonumber \\
&\ll \frac{1}{|\mathcal{C}|}\sum_{\substack{p,q \le x \\ p \ne q}} \frac{|\aut_{\mathbb{F}_{p}}(E_{s,t})|\cdot|\aut_{\mathbb{F}_{q}}(E_{s^{\prime},t^{\prime}})|}{(p-1)(q-1)}\sum_{\substack{s,t \in \mathbb{F}_{p} \\ st=0 \\ s^{\prime},t^{\prime} \in \mathbb{F}_{q}^{\times}}} \sum_{\substack{d|i_{E_{s,t}}(p) \\ d^{\prime}|i_{E_{s^{\prime},t^{\prime}}}(q)}} |g(d)|\,|g(d^{\prime})|\sum_{\substack{|a| \le A, |b| \le B: \\ \exists 1 \le u^{\prime} < q \\ a \equiv s^{\prime}(u^{\prime})^{4} \bmod{q} \\ b \equiv t^{\prime}(u^{\prime})^{6} \bmod{q}}}1 \nonumber\\
\label{char}
&\ll \left(\sum_{p \le x} \frac{1}{p}\sum_{\substack{s,t \in \mathbb{F}_{p} \\ st=0}} \sum_{d|i_{E_{s,t}}(p)} |g(d)|\right)\left(\frac{1}{|\mathcal{C}|}\sum_{q \le x} \sum_{s^{\prime},t^{\prime} \in \mathbb{F}_{q}^{\times}} \frac{|\aut_{\mathbb{F}_{q}}(E_{s^{\prime},t^{\prime}})|}{q-1} \sum_{d^{\prime}|i_{E_{s^{\prime},t^{\prime}}}(q)} |g(d^{\prime})| \sum_{\substack{|a| \le A, |b| \le B: \\ \exists 1 \le u^{\prime} < q \\ a \equiv s^{\prime}(u^{\prime})^{4} \bmod{q} \\ b \equiv t^{\prime}(u^{\prime})^{6} \bmod{q}}}1\right).
\end{align}
By Lemma \ref{claim} (iv), the first term in the above product is bounded by $x/\log{x}$.
The second term in the above product can be bounded by
\begin{align*}
&\ll \frac{1}{|\mathcal{C}|}\left(\sum_{q \le x} \frac{1}{q} \sum_{s^{\prime},t^{\prime} \in \mathbb{F}_{q}^{\times}} \sum_{d^{\prime}|i_{E_{s^{\prime},t^{\prime}}}(q)} |g(d^{\prime})| \left(\sum_{\substack{|a| \le A, |b| \le B: \\ \exists 1 \le u^{\prime} < q \\ a \equiv s^{\prime}(u^{\prime})^{4} \bmod^{\prime}{q} \\ b \equiv t^{\prime}(u^{\prime})^{6} \bmod^{\prime}{q}}}1-\frac{2AB}{q}\right)+\sum_{q \le x} \frac{1}{q} \sum_{s^{\prime},t^{\prime} \in \mathbb{F}_{q}^{\times}} \sum_{d^{\prime}|i_{E_{s^{\prime},t^{\prime}}}(q)} |g(d^{\prime})|\frac{2AB}{q}\right). \\
\end{align*}
Following the computations in Section \ref{themainterm} we can conclude that
$$\sum_{q \le x} \frac{1}{q} \sum_{s^{\prime},t^{\prime} \in \mathbb{F}_{q}^{\times}} \sum_{d^{\prime}|i_{E_{s^{\prime},t^{\prime}}}(q)} |g(d^{\prime})|\frac{2AB}{q}\ll AB \frac{x}{\log{x}}.$$
This together with Lemma \ref{lemma:2} imply that,  under the assumptions of Theorem \ref{Theorem 2}, the second term of the product in \eqref{char} is also bounded by $x/\log{x}$. Thus, we have
\begin{align}
\label{s2}
S_2&\ll
\frac{x^2}{(\log{x})^2}.
\end{align}
For $S_{4}$, we have
\begin{align*}
S_{4} 
&\ll \frac{1}{|\mathcal{C}|} \sum_{\substack{p,q \le x \\ p \ne q}} \frac{1}{pq} \sum_{\substack{s,t \in \mathbb{F}_{p} \\ st=0 \\ s^{\prime},t^{\prime} \in \mathbb{F}_{q} \\ s^{\prime}t^{\prime}=0}} \sum_{\substack{d|i_{E_{s,t}}(p) \\ d^{\prime}|i_{E_{s^{\prime},t^{\prime}}}(q)}} |g(d)| \cdot |g(d^{\prime})| \sum_{\substack{|a| \le A,|b| \le B \\ ab \equiv 0 \bmod{p} \\ ab \equiv 0 \bmod{q}}} 1.
\end{align*}
Note that
\begin{align*}
\sum_{\substack{|a| \le A, |b| \le B \\ ab \equiv 0 \bmod{pq}}} 1
&\ll \frac{AB}{pq}+O\left(A+B+\frac{B}{q}+\frac{B}{p}+\frac{B}{pq}\right) \ll \frac{AB}{pq}+O(A+B).
\end{align*}
Thus,
\begin{align}
\label{last}
S_4& \ll \frac{1}{|\mathcal{C}|} \sum_{\substack{p,q \le x \\ p \ne q}} \frac{1}{pq} \sum_{\substack{s,t \in \mathbb{F}_{p} \\ st=0 \\ s^{\prime},t^{\prime} \in \mathbb{F}_{q} \\ s^{\prime}t^{\prime}=0}} \sum_{\substack{d|i_{E_{s,t}}(p) \\ d^{\prime}|i_{E_{s^{\prime},t^{\prime}}}(q)}} |g(d)| \cdot |g(d^{\prime})| \left(\frac{AB}{pq}+A+B\right).
\end{align}
The summation in \eqref{last} corresponding to $AB/pq$ can be bounded by
\begin{align*}
&\left(\sum_{d \le \sqrt{x}+1} |g(d)| \sum_{\substack{p \le x \\ p \equiv 1 \bmod{d}}} \frac{1}{p}\right)^{2} \ll (\log \log x)^{2}(\log x)^{2}\left(\sum_{d \le \sqrt{x}+1} \frac{|g(d)|}{d}\right)^{2} 
\ll x^{\beta}(\log \log x)^{2}(\log x)^{2\gamma+4}.
\end{align*}
By employing Lemma \ref{claim} (iv), the summation in \eqref{last} corresponding to $A+B$ can be bounded by
\begin{align*}
&\ll \left(\frac{1}{A}+\frac{1}{B}\right)\left(\sum_{p \le x} \frac{1}{p} \sum_{\substack{s,t \in \mathbb{F}_{p} \\ st=0}} \sum_{d|i_{E_{s,t}}(p)} |g(d)|\right)^{2} \ll \left(\frac{1}{A}+\frac{1}{B}\right)\frac{x^2}{(\log{x})^2}.
\end{align*}
In conclusion we have
\begin{equation}
\label{s4}
S_4 \ll x^{\beta}(\log \log x)^{2}(\log x)^{2\gamma+4}+\left(\frac{1}{A}+\frac{1}{B}\right)\frac{x^2}{(\log{x})^2}.
\end{equation}
\noindent Thus,  under the assumptions of Theorem \ref{Theorem 2}, by applying \eqref{s1}, \eqref{s2}, and \eqref{s4}
in \eqref{sefr},  we have
\begin{equation}
\label{pang}
\frac{1}{|\mathcal{C}|} \sum_{E \in \mathcal{C}} \sum_{\substack{p,q \le x \\ p \ne q}} f(i_{E}(p))f(i_{E}(q)) =c_0(f)^2 {\rm li}(x)^2+O(S)+O\left( \frac{x^2}{(\log{x})^2}  \right).
\end{equation}


Next we bound $\sum_{p \le x} f(i_{E}(p))^{2}$.
Let $G:\mathbb{N} \to \mathbb{C}$ be defined by
\begin{equation*}
f(n)^{2}=\sum_{d|n} G(d).
\end{equation*}
Then, we have
\begin{equation*}
\sum_{n \le x} |G(n)| \le \sum_{d \le x} |f(d)|^{2} \sum_{\substack{n \le x \\ d|n}} 1 \le x\sum_{d \le x} \frac{|f(d)|^{2}}{d} \ll x^{1+2\beta}(\log x)^{2\gamma+1}.
\end{equation*}
Thus, applying the proof of Theorem \ref{theorem1} for $G$ and $f^2$ yields
\begin{align}
\frac{1}{|\mathcal{C}|}\sum_{E\in \mathcal{C}}\sum_{p \le x} f(i_{E}(p))^{2}& \ll \frac{x}{\log{x}}+\left(\frac{1}{A}+\frac{1}{B}\right)\left(\frac{x}{\log{x}}+x^{\frac{1+2\beta}{2}}(\log x)^{2\gamma+3}\right)+\left(\frac{1}{A^{1/r}}+\frac{1}{B^{1/r}}\right)x^{\frac{1+2\beta}{2}+\frac{r+1}{4r^{2}}}(\log x)^{2\gamma+2}\log \log x\nonumber\\
&\qquad \qquad+\frac{1}{\sqrt{AB}}\left(x^{\frac{3}{2}}(\log x)^{2}+x^{1+{\beta}}(\log x)^{2\gamma+4}(\log \log x)^{5/4} + x^{\frac{5+4\beta}{4}}(\log x)^{2\gamma+4} \log\log{x}    \right).\nonumber
\end{align}
Therefore
\begin{equation}
\label{f2}
\frac{1}{|\mathcal{C}|}\sum_{E\in \mathcal{C}}\sum_{p \le x} f(i_{E}(p))^{2}=O(S).
\end{equation}

Now by applying \eqref{pang} and \eqref{f2} to \eqref{main-identity} we conclude that, under the assumptions of Theorem \ref{Theorem 2},  we have
$$\frac{1}{|\mathcal{C}|} \sum_{E \in \mathcal{C}} \left(\sum_{p \le x} f(i_{E}(p))-c_{0}(f)\li(x)\right)^{2} =O(S)+O\left( \frac{x^2}{(\log{x})^2}\right).$$ 
Since $S=O(x^2/(\log{x})^2)$ the result follows.
\end{proof}

\subsection*{Acknowledgements}
The authors would like to thank Chantal David and Igor Shparlinski for correspondence on an earlier version of this paper.

\end{document}